\newrobustcmd{\MakeTitleCase}[1]{%
	\ifthenelse{\ifcurrentfield{booktitle}\OR\ifcurrentfield{booksubtitle}%
		\OR\ifcurrentfield{maintitle}\OR\ifcurrentfield{mainsubtitle}%
		\OR\ifcurrentfield{journaltitle}\OR\ifcurrentfield{journalsubtitle}%
		\OR\ifcurrentfield{issuetitle}\OR\ifcurrentfield{issuesubtitle}%
		\OR\ifentrytype{book}\OR\ifentrytype{mvbook}\OR\ifentrytype{bookinbook}%
		\OR\ifentrytype{booklet}\OR\ifentrytype{suppbook}%
		\OR\ifentrytype{collection}\OR\ifentrytype{mvcollection}%
		\OR\ifentrytype{suppcollection}\OR\ifentrytype{manual}%
		\OR\ifentrytype{periodical}\OR\ifentrytype{suppperiodical}%
		\OR\ifentrytype{proceedings}\OR\ifentrytype{mvproceedings}%
		\OR\ifentrytype{reference}\OR\ifentrytype{mvreference}%
		\OR\ifentrytype{report}\OR\ifentrytype{thesis}}
	{#1}
	{\MakeSentenceCase{#1}}}
\renewrobustcmd*{\bibinitdelim}{\,} 
\theoremstyle{definition}
\newtheorem{definition}{Definition}
\newaliascnt{question}{definition}
\newtheorem{question}[question]{Question}
\theoremstyle{plain}
\newtheorem{theorem}{Theorem}[section]
\newaliascnt{lemma}{theorem}
\newtheorem{lemma}[lemma]{Lemma}
\newaliascnt{corollary}{theorem}
\newtheorem{corollary}[corollary]{Corollary}
\newaliascnt{proposition}{theorem}
\newtheorem{proposition}[proposition]{Proposition}
\theoremstyle{remark}
\newcommand{\F}{\mathbb{F}}
\newcommand{\Fpm}{\mathbb{F}_{2^m}}
\newcommand{\Fptwom}{\mathbb{F}_{2^{2m}}}
\newcommand{\Z}{\mathbb{Z}}
\newcommand{\N}{\mathbb{N}}
\newcommand{\A}{\mathcal{A}}
\newcommand{\Aut}{\textnormal{Aut}}
\newcommand{\Gal}{\textnormal{Gal}}
\def\zhou#1 {\fbox {\footnote {\ }}\ \footnotetext { From Yue: {\color{red}#1}}}
\def\chen#1 {\fbox {\footnote {\ }}\ \footnotetext { From Chen: {\color{blue}#1}}}
\newcommand\blfootnote[1]{%
	\begingroup
	\renewcommand\thefootnote{}\footnote{#1}%
	\addtocounter{footnote}{-1}%
	\endgroup
}
\begin{document}
	\title{The number of almost perfect nonlinear functions grows exponentially}
	\author{Christian Kaspers\thanks{Institute for Algebra and Geometry, Otto von Guericke University Magdeburg, 39106 Magdeburg, Germany (email: \href{mailto:christian.kaspers@ovgu.de}{\nolinkurl{christian.kaspers@ovgu.de}})} \space and Yue Zhou\thanks{Department of Mathematics, National University of Defense Technology,  410073 Changsha, China (email: \href{mailto:yue.zhou.ovgu@gmail.com}{\nolinkurl{yue.zhou.ovgu@gmail.com}})}}
	\date{\today}
	\maketitle
	
	\begin{abstract}
	Almost perfect nonlinear (APN) functions play an important role in the design of block ciphers as they offer the strongest resistance against differential cryptanalysis. Despite more than 25 years of research, only a limited number of APN functions are known. In this paper, we show that a recent construction by Taniguchi provides at least $\frac{\varphi(m)}{2}\left\lceil \frac{2^m+1}{3m} \right\rceil$ inequivalent APN functions on the finite field with ${2^{2m}}$ elements, where $\varphi$ denotes Euler's totient function. This is a great improvement of previous results: for even $m$, the best known lower bound has been $\frac{\varphi(m)}{2}\left(\lfloor \frac{m}{4}\rfloor +1\right)$, for odd $m$, there has been no such lower bound at all. Moreover, we determine the automorphism group of Taniguchi's APN functions.
	\end{abstract}
	
	
	\paragraph{Keywords} vectorial Boolean function, APN function, CCZ-equivalence, differential uniformity, differential analysis

\section{Introduction}
\label{sec:introduction}
	\blfootnote{\copyright\ IACR 2020. This article is the final version submitted by the authors to the IACR and to Springer-Verlag on November 26, 2020. The version published by Springer-Verlag is available at: URL follows.}
	A function $f:\F_{2^n} \rightarrow \F_{2^n}$ is called \emph{almost perfect nonlinear} (APN) if the equation
	\[
		f(x+a)+f(x)=b
	\]
	has exactly $0$ or $2$ solutions for any $b\in \F_{2^n}$ and any nonzero $a\in \F_{2^n}$. APN functions were introduced in \citeyear{nyberg1994} by \textcite{nyberg1994}. She defined them as the mappings with the highest resistance to differential cryptanalysis, which is one of the most important cryptanalyst tools for block ciphers and was introduced in \citeyear{biham1991} by \textcite{biham1991}. APN~functions and other functions with low differential uniformity are widely used in the design of symmetric key cryptographic algorithms such as the S-boxes in nonlinear layers of block ciphers. For instance, in the hardware oriented MISTY ciphers \cite{matsui_block_1997}, the 16-bit state is split into two parts of different odd lengths on which two APN permutations are used; in the AES algorithm \cite{AES_2000}, an affine transformation of the inverse function over $\F_{2^8}$ which has differential $4$-uniformity was chosen as the S-box. \textcite{blondeau_perfect_2015} provide an overview of theoretical results and applications of APN functions in cryptography.\par
	
	APN functions are also strongly connected with coding theory and finite geometry. In particular, quadratic APN functions are equivalent to a special type of dimensional dual hyperovals; see the work by \textcite{yoshiara2008,edel2010,dempwolff2014} for more details. 
	
	Since their introduction, APN functions have been studied intensively. For an extended overview of these functions, we refer to the survey by \textcite{pott2016}. For a long time, only very few APN functions were known, all of which power functions of the form $x \mapsto x^d$. In~\citeyear{edel2006}, \textcite{edel2006} reported the first two examples of non-power APN functions on $\F_{2^{10}}$ and $\F_{2^{12}}$. Since then, quite a few infinite families of non-power APN functions have been found. A recent list of them was given by \textcite[Table~3]{budaghyan2020}.\par
	
	Except for some sporadic examples, every known non-power APN function is equivalent to a quadratic APN function, that can be written in the form $\sum_{0\leq i<j\le n-1}\alpha_{i,j} x^{2^i+2^j}+\sum_{0\leq i \le n-1}\beta_i x^{2^i} +\gamma$ with $\alpha_{i,j},\beta_i,\gamma\in \F_{2^n}$ for $i,j = 0, \dots, n-1$ and not all $\alpha_{i,j}=0$. By equivalent we mean there exists a CCZ-equivalence transformation between functions over $\F_{2^n}$. This equivalence relation was introduced in \citeyear{carletcharpinzinoviev1998} by \textcite{carletcharpinzinoviev1998}, it preserves the APN property.\par
	
	When $n$ is odd, several known APN functions are also permutations on $\F_{2^n}$. The most fascinating problem regarding APN functions is to find APN permutations on $\F_{2^{n}}$ where $n$ is even. So far, only one such function is known: it was found by \textcite{dillon2010} on $\F_{2^6}$. This sporadic example is also equivalent to a quadratic APN function.\par
	
	The most general equivalence relations of functions from $\F_2^n$ to $\F_2^n$ is the so-called CCZ-equivalence. A very basic and natural question concerning APN functions is the following.
	
	\begin{question}\label{question}
		How many CCZ-inequivalent APN functions on $\F_{2^n}$ exist for a given $n$?
	\end{question}
	\setcounter{question}{0}
	
	Despite its simplicity, this question has not been satisfactorily answered yet. By checking the known APN functions, see \autoref{sec:known_classes}, we first notice that all the power APN functions only provide very few inequivalent examples. Little is known, however, about the number of inequivalent non-power APN functions as it is, in general, a very hard problem to prove the non-equivalence of two functions. Only for small dimensions, this problem can be solved computationally, for larger dimensions, one has to solve it theoretically. Studying a special family of non-power APN functions introduced by Pott and the second author~\cite{zhou2013}, the present authors~\cite{kasperszhou2020} recently presented a first benchmark to answer \autoref{question} for certain fields: they showed that there are at least $\frac{1}{2}\varphi(m)\left(\lfloor m/4\rfloor +1\right)$ inequivalent APN functions on $\Fptwom$ with $m$ even, where $\varphi$ is Euler's totient function.
	
	In this paper, we considerably improve this lower bound and extend it to $\Fptwom$ for any $m \geq 2$. We investigate a family of APN functions defined on $\F_{2^{2m}}$ for any $m\geq 2$ that has been found by \textcite{taniguchi2019}. By completely determining the equivalence of members among this family, we show that the number of inequivalent APN functions on $\F_{2^{2m}}$ is at least 
	\[
		\frac{\varphi(m)}{2}\left\lceil \frac{2^m+1}{3m} \right\rceil.
	\]
	As a corollary, our results enables us to determine the automorphism group of the Taniguchi APN functions.	

	The paper is organized as follows. In \autoref{sec:Preliminaries}, we introduce all necessary definitions and notations. In \autoref{sec:known_classes}, we give an overview of the known classes of APN functions and introduce the constructions by \textcite{taniguchi2019} and Pott and the second author~\cite{zhou2013}. Afterwards, we solve the equivalence problem for the Taniguchi APN functions and present their automorphism group in \autoref{sec:Taniguchi_equivalence}. In \autoref{sec:Taniguchi_number}, we use these results to establish the aforementioned lower bound on the total number of inequivalent APN functions on $\F_{2^{2m}}$. To conclude, we point out several open problems regarding APN functions in \autoref{sec:conclusion}.

\section{Preliminaries}
\label{sec:Preliminaries}
	In this section, we present all the definitions and basic results needed to follow the paper. Denote by $\F_{2}^n$ the $n$-dimensional vector space over the finite field $\F_2$ with two elements. A function from $\F_2^n$ to $\F_2^m$ is called a \emph{vectorial Boolean function} if $m \ge 2$ or simply a \emph{Boolean function} if $m=1$. In this paper, we will only consider vectorial Boolean functions from $\F_2^n$ to $\F_2^n$, we say functions \emph{on} $\F_2^n$. In most cases, we identify the $n$-dimensional vector space $\F_2^n$ over $\F_{2}$ with the finite field $\F_{2^n}$ with $2^n$ elements. This will allow us to use finite field operations and notations. Note that any function on the finite field $\F_{2^n}$ can be written as a univariate polynomial mapping of degree at most $2^n-1$. Furthermore, denote by $\F_{2^n}^*$ the multiplicative group of $\F_{2^n}$.\par 

	Besides our definition given above, there are several equivalent definitions of almost perfect nonlinear functions. We refer to \textcite{budaghyan2014} and \textcite{pott2016} for an extended overview of these functions. In this paper, we will only consider \emph{quadratic} APN functions. We define this term using the coordinate function representation of a function on $\F_2^n$.

\begin{definition}
	Let $f \colon \F_2^n \to \F_2^n$ be a vectorial Boolean function defined by $n$ Boolean coordinate functions $f_1, \dots, f_n\colon \F_2^n \to \F_2$ that are given in their algebraic normal form, that is 
	\[
		f(x_1,\dots,x_n) = 
		\begin{pmatrix}
		f_1(x_1, \dots, x_n)\\\vdots\\ f_n(x_1, \dots, x_n)
		\end{pmatrix}.
	\]
	The maximal degree of the coordinate functions $f_1, \dots, f_n$ is called the \emph{algebraic degree} of~$f$. We call a function of algebraic degree $2$ \emph{quadratic}, and a function of algebraic degree $1$ \emph{affine}. If $f$ is affine and has no constant term, we call $f$ \emph{linear}.
\end{definition}

In polynomial mapping representation, any quadratic function $f$ on $\F_{2^n}$ can be written in the form
	\[
		f(x) = \sum_{\substack{i,j = 0 \\ i < j}}^{n-1}\alpha_{i,j} x^{2^i+2^j} + \sum_{i = 0}^{n-1}\beta_i x^{2^i} + \gamma,
	\]
	and any affine function $f \colon \F_{2^n} \to \F_{2^n}$ can be written as
	\[
		f(x) = \sum_{i=0}^{n-1}\beta_i x^{2^i} + \gamma.
	\]
	If $f$ is affine and $\gamma = 0$, then $f$ is linear. Similar terms are used to describe polynomials over $\F_{2^n}$. Denote by $\F_{2^n}[X]$ the univariate polynomial ring over $\F_{2^n}$. A polynomial of the form
	\[
		P(X) = \sum_{i\ge 0}\alpha_i X^{2^i}
	\]
	is called a \emph{linearized polynomial}. Note that there is a one-to-one correspondence between linear functions on $\F_2^n$ and linearized polynomials in $\F_{2^n}[X] / (X^{2^n}-X)$.  In the same way as for univariate polynomials, we define a linearized polynomial in the multivariate polynomial ring~$\F_{2^n}[X_1,\dots, X_r]$ as a polynomial of the form
	\[
		P(X_1,\dots,X_r) = \sum_{j=1}^{r} \left(\sum_{i\ge 0}\alpha_{i,j} X_j^{2^i}\right).
	\]

We will use such polynomials to study the equivalence of APN functions. In this paper, we are interested in \emph{inequivalent} APN functions. There are several notions of equivalence between vectorial Boolean functions that preserve the APN property. We list them in the following definition.

\begin{definition}
	\label{def:equivalence}
	Two functions $f,g \colon \F_{2^n} \to \F_{2^n}$ are called 
	\begin{itemize}
		\item \emph{Carlet-Charpin-Zinoviev equivalent} (CCZ-equivalent), if there is an affine permutation $C$ on $\F_{2^n} \times \F_{2^n}$ such that
		\[
			C(G_f) = G_g,
		\]
		where $G_f = \{(x,f(x)) : x \in \F_{2^n}\}$ is the graph of $f$,
		\item \emph{extended affine equivalent} (EA-equivalent) if there exist three affine functions $A_1,A_2,A_3 \colon \F_{2^n} \to \F_{2^n}$, where $A_1$ and $A_2$ are permutations, such that
		\[
			f(A_1(x)) = A_2(g(x)) + A_3(x),
		\]
		\item \emph{extended linearly equivalent} (EL-equivalent) if they are EA-equivalent and $A_1,A_2$ and $A_3$ are linear,
		\item \emph{affine equivalent} if they are EA-equivalent and $A_3(x) = 0$,
		\item \emph{linearly equivalent} if they are EL-equivalent and $A_3(x)=0$.
	\end{itemize}
\end{definition}
	
	In the case of EL- or linear equivalence, we usually write $L,N,M$ instead of $A_1,A_2,A_3$ to underline the linearity of these functions.
	CCZ-equivalence is the most general known notion of equivalence that preserves the APN property. Obviously, linear equivalence implies affine equivalence, and affine equivalence implies EA-equivalence. Similarly, linear equivalence implies EL-equivalence, and EL-equivalence implies EA-equivalence. Moreover, it is well known that EA-equivalence implies CCZ-equivalence but, in general, the converse is not true. For quadratic APN~functions, however, \textcite{yoshiara2012} proved that also the converse holds.
	
	\begin{proposition}[{\textcite[Theorem~1]{yoshiara2012}}]
	\label{prop:yoshiara}
		Let $f$ and $g$ be quadratic APN functions on a finite field $\F_{2^n}$ with $n \ge 2$. Then $f$ is CCZ-equivalent to $g$ if and only if $f$ is EA-equivalent to $g$.
	\end{proposition}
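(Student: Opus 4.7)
The forward implication is routine: given an EA-equivalence $f \circ A_1 = A_2 \circ g + A_3$ with affine permutations $A_1, A_2$, the map $(x,y) \mapsto (A_1(x),\, A_2(y) + A_3(x))$ is an affine permutation of $\F_{2^n} \times \F_{2^n}$ sending $G_g$ to $G_f$. So the entire content of the proposition lies in the reverse direction, and I would concentrate there.

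For the converse, suppose $C$ is an affine permutation of $\F_{2^n} \times \F_{2^n}$ with $C(G_f) = G_g$. After pre- and post-composing $f$ and $g$ with translations --- which affects neither CCZ- nor EA-equivalence and preserves quadraticity and the APN property --- I may assume $f(0) = g(0) = 0$ and $C$ linear. Write
\[
    C(x,y) = \bigl(L_1(x) + L_2(y),\ L_3(x) + L_4(y)\bigr)
\]
with linear $L_1,\dots,L_4$. Then $C(G_f) = G_g$ is equivalent to the statement that $\phi(x) := L_1(x) + L_2(f(x))$ is a permutation of $\F_{2^n}$ and that $g(\phi(x)) = L_3(x) + L_4(f(x))$. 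The goal is to show one can arrange $L_2 = 0$, since by \autoref{def:equivalence} this is exactly EL-equivalence and in particular EA-equivalence.

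The strategy for controlling $L_2$ is to exploit the bilinear structure coming from quadraticity. For any quadratic $h$ on $\F_{2^n}$, the map $B_h(x,a) := h(x+a)+h(x)+h(a)+h(0)$ is symmetric and $\F_2$-bilinear, and the APN condition is equivalent to $\ker B_h(\cdot,a) = \{0,a\}$ for every $a \neq 0$. Differentiating the identity $g\circ\phi = L_3 + L_4\circ f$ in direction $a$ yields
\[
    g(\phi(x+a)) + g(\phi(x)) = L_3(a) + L_4(B_f(x,a)) + L_4(f(a)),
\]
while on the left-hand side, using $\phi(x+a) + \phi(x) = L_1(a) + L_2(B_f(x,a)) + L_2(f(a))$ together with $g(u+v)+g(u)+g(v) = B_g(u,v)$, one can rewrite the same quantity as
\[
    B_g\bigl(\phi(x),\, \phi(x+a)+\phi(x)\bigr) + g\bigl(\phi(x+a)+\phi(x)\bigr).
\]
Equating the two and separating the piece that is bilinear in $(x,a)$ from those that are linear in $x$ only, linear in $a$ only, or constant, produces a system of identities. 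The key identity should say that the expression $B_g\bigl(\phi(x),\, L_2(B_f(x,a))\bigr)$ decomposes as a sum of terms, each of which factors through either $x$ or $a$ alone. Applying the APN property of $g$ --- which forces $B_g$ to be as non-degenerate as possible --- should then force $L_2 \circ B_f(\cdot,a)$ to lie in a very restricted subspace for every $a$, and using the APN property of $f$ in turn this should force $L_2$ to vanish on the image of $f$. Replacing $L_1$ by $L_1 + L_2 \circ f$ (which is still linear in $x$ once $L_2 \circ f$ reduces to a linear map in this situation) produces the desired representation with $L_2 = 0$.

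The main obstacle I anticipate is the bookkeeping in the last paragraph: separating an identity in two variables $(x,a)$ into its bilinear, linear-in-$x$, linear-in-$a$, and constant components is mechanical but delicate, and it is only on the bilinear piece that the APN property on $g$ can be leveraged. The theorem works because quadraticity makes derivatives bilinear and APN makes those bilinear forms have minimal kernel, so the two hypotheses together give just enough rigidity to rule out a genuinely graph-mixing $L_2$; neither hypothesis alone suffices, which is consistent with CCZ-equivalence being strictly coarser than EA-equivalence outside the quadratic APN setting.
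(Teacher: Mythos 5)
First, a remark on the comparison you asked for: the paper does not prove this proposition at all---it is imported verbatim from Yoshiara's 2012 paper as a black box (and is used throughout via \autoref{prop:EA-EL} and \autoref{lem:CCZ_EA})---so there is no in-paper argument to measure yours against; the only question is whether your sketch could be completed into a proof of Yoshiara's theorem. Your forward direction is correct and is indeed routine.

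The converse, however, has a genuine obstruction beyond the bookkeeping you defer. Your plan is to take the \emph{given} linear CCZ-mapping $C(x,y)=(L_1(x)+L_2(y),\,L_3(x)+L_4(y))$ and show, by differentiating the graph identity and using the minimal-kernel property of $B_f$ and $B_g$, that $L_2\circ f$ is linear, so that $C$ itself can be rewritten as an EL-mapping. That intermediate statement is false within the stated range $n\ge 2$ of the proposition. Take $n=3$, $f(x)=x^3$ and $g(x)=x^{5}=x^{2^2+1}$: both are quadratic APN on $\F_{2^3}$, and the coordinate swap $C(x,y)=(y,x)$ carries $G_f$ to $G_g$ because $g$ is the compositional inverse of $f$ on $\F_{2^3}$ (here $3\cdot 5\equiv 1 \pmod 7$). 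For this $C$ we have $L_1=0$ and $L_2=\mathrm{id}$, so $L_2\circ f=x^3$ is not linear and $L_2$ certainly does not vanish on the image of $f$; yet $f$ and $g$ \emph{are} EA-equivalent, even linearly equivalent, since $x^5=(x^3)^{2^2}$. Since the swap is a legitimate CCZ-mapping, it satisfies every identity you could correctly derive from the graph relation, so no such identity can force $L_2$ to kill $\mathrm{im}(f)$. The theorem is therefore genuinely existential---one must \emph{produce a different} equivalence mapping rather than repair the given one---and a purely local analysis of the given $C$ cannot do that; this is also why every decisive step in your sketch is phrased with ``should.'' Yoshiara's actual proof is global: it tracks where the equivalence sends the canonical subspace $\{0\}\times\F_{2^n}$ (equivalently, it works with the code, respectively the dual hyperoval, attached to a quadratic APN function) and shows that this subspace can be returned to its standard position, which is also the mechanism behind \autoref{lem:CCZ_EA}. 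Your proposal identifies the right objects ($B_f$, $B_g$, and the APN kernel condition), but it neither carries out the key step nor aims at a provable intermediate claim.
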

	
	In this paper, \autoref{prop:yoshiara} will allow us to prove the CCZ-inequivalence of certain quadratic APN functions by showing that they are EA-inequivalent. \par
	
	We characterize some of the mappings that define an equivalence of two functions in the sense of \autoref{def:equivalence} in more detail. Let $f,g$ be functions on $\F_{2^n}$, and denote their graphs by $G_f$ and $G_g$, respectively. We call an affine permutation~$C$ on $\F_{2^n} \times \F_{2^n}$ such that $C(G_f) = G_g$ a \emph{CCZ-mapping} from $g$ to $f$. Similarly to \textcite{canteaut2019a}, we define an \emph{EL-mapping $C_{EL} =(L,M,N)$} from $g$ to $f$ as a linear CCZ-mapping from $g$ to $f$ satisfying
	\[
		f(L(x)) = N(g(x)) + M(x),
	\]
	where $L,N$ are linear permutations on $\F_{2^n}$ and $M$ is a linear map on $\F_{2^n}$. Such an EL-mapping $C_{EL}$ from $g$ to $f$ may be represented as a formal matrix
	\[
		C_{EL} = \begin{bmatrix}L &0\\ M& N\end{bmatrix}
	\]
	corresponding to the calculation
	\[ 
	\begin{bmatrix}	L & 0\\M & N \end{bmatrix}
	\begin{bmatrix}	x\\	g(x)\end{bmatrix}
	=
	\begin{bmatrix} L(x)\\	N(g(x))+M(x)\end{bmatrix} 
	=
	\begin{bmatrix}	y\\	f(y) \end{bmatrix}.
	\]
	We moreover define an \emph{EA-mapping $C_{EA} = (L,M,N,a,b)$} from $g$ to $f$ as a CCZ-mapping from $g$ to $f$ whose linear part is an EL-mapping. It is characterized by $L,M,N$ as above and two elements $a,b \in \F_2^n$ such that
	\begin{equation}
	\label{eq:EA-linear_1}
	f(L(x) + a) = N(g(x)) + M(x) + b.
	\end{equation}
	
	Of particular interest are equivalence mappings from $f$ to $f$, that are mappings preserving the graph of $f$.
	
	\begin{definition}
		For a function $f: \F_{2^n} \to \F_{2^n}$ with graph $G_f$, we call an affine permutation~$\A$ on $\F_{2^n} \times \F_{2^n}$ with $\A(G_f) = G_f$ an \emph{automorphism} of $f$. We denote the set of all such mappings by $\Aut(f)$. If $\A$ is an EA-mapping, we say that $\A$ is an \emph{EA-automorphism} of $f$, and we denote the set of all EA-automorphisms by $\Aut_{EA}(f)$. Analogously, if $\A$ is an EL-mapping, we say that $\A$ is an \emph{EL-automorphism} of $f$, and we denote the set of all EL-automorphisms by $\Aut_{EL}(f)$.
	\end{definition}
		Note that $\Aut(f), \Aut_{EA}(f)$ and $\Aut_{EL}(f)$ each form a group under composition, see \textcite{canteaut2019a}, and $\Aut_{EL}(f)$ is a subgroup of $\Aut_{EA}(f)$, which in turn is a subgroup of $\Aut(f)$. Hence, we simply call $\Aut(f)$ the \emph{automorphism group} of $f$, and we call $\Aut_{EA}(f)$ and $\Aut_{EL}(f)$ the automorphism group of $f$ under EA- or EL-equivalence, respectively.
	
	All the functions we study in this paper are quadratic and have no constant term. We show that if any two such functions $f$ and $g$ are EA-equivalent, they are also EL-equivalent.	
	\begin{proposition}
	\label{prop:EA-EL}
	Suppose $f$ and $g$ are EA-equivalent quadratic functions on $\F_2^n$ with $f(0) = g(0) = 0$, and denote by $C_{EA}=(L,M,N,a,b)$ an EA-mapping from $g$ to $f$. Define a mapping $D_{f,L,a}$ on $\F_2^n$ as
	\[
		D_{f,L,a}(x) = f(L(x)+a) + f(L(x)) + f(a).
	\]
	Then $b = f(a)$, the functions $f$ and $g$ are EL-equivalent, and $C_{EA}$ uniquely defines an EL-mapping $C_{EL} = (L,M+D_{f,L,a},N)$ from $g$ to $f$.
	\end{proposition}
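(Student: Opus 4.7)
The plan is to evaluate the defining equation of the EA-mapping carefully at specific inputs, then use the quadraticity of $f$ to split off an explicit linear correction.

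First I would set $x=0$ in the EA-equation \cref{eq:EA-linear_1}. Since $L$ is linear we have $L(0)=0$, and since $M,N$ are linear and $g(0)=0$ we obtain $f(a)=b$. This already pins down $b$ and reduces the identity to
\[
f(L(x)+a)=N(g(x))+M(x)+f(a).
\]

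Next I would exploit that $f$ is quadratic with $f(0)=0$. Writing $f$ as a sum of monomials of algebraic degree at most $2$, a direct expansion shows that the polarization
\[
B_f(u,v)\;=\;f(u+v)+f(u)+f(v)
\]
is $\mathbb{F}_2$-bilinear and symmetric in $u,v$ (the constant term vanishes because $f(0)=0$, and in characteristic $2$ all cross terms $u^{2^i}v^{2^j}+u^{2^j}v^{2^i}$ are bilinear). Specializing to $u=L(x)$ and $v=a$ gives
\[
D_{f,L,a}(x)=f(L(x)+a)+f(L(x))+f(a)=B_f(L(x),a),
\]
and because $L$ is linear and $B_f(\,\cdot\,,a)$ is linear, the map $D_{f,L,a}$ is a linear function on $\mathbb{F}_2^n$. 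Substituting $f(L(x)+a)=f(L(x))+f(a)+D_{f,L,a}(x)$ into the reduced EA-equation and cancelling $f(a)$ on both sides yields
\[
f(L(x))=N(g(x))+\bigl(M+D_{f,L,a}\bigr)(x),
\]
which is precisely the defining identity of the EL-mapping $C_{EL}=(L,M+D_{f,L,a},N)$ from $g$ to $f$. Since $L$ and $N$ were already linear permutations and $M+D_{f,L,a}$ is a sum of two linear maps, $C_{EL}$ is indeed an EL-mapping, so $f$ and $g$ are EL-equivalent.

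Finally, uniqueness is immediate: the components $L,N$ are unchanged, $D_{f,L,a}$ is defined purely in terms of $f$, $L$ and $a$ (which are part of the data of $C_{EA}$), and $M$ is the middle component of $C_{EA}$, so the EL-mapping $(L,M+D_{f,L,a},N)$ is determined by $C_{EA}$ with no choice involved. I do not anticipate any serious obstacle here; the only point that deserves care is verifying that $D_{f,L,a}$ is genuinely linear, which reduces to the standard fact that the second difference of a degree-$2$ polynomial map in characteristic $2$ is a symmetric bilinear form.
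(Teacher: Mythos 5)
Your proposal is correct and follows essentially the same route as the paper: identify $b=f(a)$ by comparing constant terms (you do this by evaluating at $x=0$, the paper by noting both sides of the rewritten identity have no constant part), use that the second difference $D_{f,L,a}$ of the quadratic $f$ is linear, and substitute to obtain the EL-identity $f(L(x))=N(g(x))+(M+D_{f,L,a})(x)$. Your explicit polarization argument for the linearity of $D_{f,L,a}$ just fills in a step the paper declares ``easy to confirm.''
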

	\begin{proof}
	Recall that $C_{EA}$ satisfies \cref{eq:EA-linear_1}. As $f$ is quadratic, it is easy to confirm that $D_{f,L,a}$ is linear for $a \ne 0$ and zero for $a = 0$. Combining \cref{eq:EA-linear_1} with the definition of $D_{f,L,a}$, we obtain
	\[
		f(L(x)) = N(g(x)) + M(x) + D_{f,L,a}(x) + b + f(a).
	\]
	As $f(0) = g(0) = 0$ and $L,N,M,D_{f,L,a}$ have no constant part either, it follows that $b = f(a)$, which implies
	\[
		f(L(x)) = N(g(x)) + M(x) + D_{f,L,a}(x).
	\]
	Consequently, $C_{EA}$ corresponds to an EL-mapping $C_{EL}$ from $g$ to $f$ of the shape 
	\[
		\begin{bmatrix}L&0\\M + D_{f,L,a}&N	\end{bmatrix},
	\]
	that is uniquely determined by $C_{EA}$.
	\end{proof}
	
	With the help of \autoref{prop:EA-EL}, we can also establish a connection between the automorphism groups $\Aut_{EA}(f)$ and $\Aut_{EL}(f)$ of a quadratic function $f$ with no constant part. \autoref{prop:AutEA_AutEL} may be well-known. We need the definition of a semidirect product first. Let~$G$ be a group with identity element~$e$. Let $H$ and $N$ be two subgroups of $G$. If~$N$ is normal, $G=NH$ and $N\cap H=\{e\}$, then we say $G$ is a \emph{semidirect product} of $N$ and $H$ and write
	\[
		G= N\rtimes H.
	\]
	
	\begin{proposition}
	\label{prop:AutEA_AutEL}
		Let $f$ be a quadratic function on $\F_{2^n}$ with $f(0) = 0$. Then
		\[
			\Aut_{EA}(f) = T_f \rtimes \Aut_{EL}(f),
		\]
		where $T_f$ is isomorphic to the additive group $(\F_{2^n},+)$ of $\F_{2^n}$.
	\end{proposition}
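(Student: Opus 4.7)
The plan is to produce $T_f$ explicitly as the subgroup of pure ``translation'' EA-automorphisms and then combine \autoref{prop:EA-EL} with a short direct calculation for normality. For each $a\in\F_{2^n}$ define the linear map $B_a(x) = f(x+a)+f(x)+f(a)$, which is well-defined and linear in $x$ because $f$ is quadratic, and introduce the affine permutation
\[
 \A_a(x,y)\;=\;(x+a,\; y+B_a(x)+f(a))
\]
on $\F_{2^n}\times\F_{2^n}$. Directly from the definition of $B_a$ we have $f(x+a)=f(x)+B_a(x)+f(a)$, so $\A_a(G_f)=G_f$; hence $\A_a\in\Aut_{EA}(f)$. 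Set $T_f=\{\A_a : a\in\F_{2^n}\}$.

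The first step is to show that $T_f$ is a subgroup isomorphic to $(\F_{2^n},+)$. Composing gives
\[
 \A_a\circ\A_{a'}(x,y) \;=\; \bigl(x+a+a',\; y+B_{a'}(x)+B_a(x+a')+f(a)+f(a')\bigr),
\]
and a short calculation in characteristic~$2$ using $B_c(u)=f(u+c)+f(u)+f(c)$ yields $B_{a'}(x)+B_a(x+a')+f(a)+f(a')=B_{a+a'}(x)+f(a+a')$, so $\A_a\circ\A_{a'}=\A_{a+a'}$. Thus $a\mapsto\A_a$ is a group isomorphism $(\F_{2^n},+)\to T_f$, and in particular $T_f\cap\Aut_{EL}(f)=\{\mathrm{id}\}$ since $\A_a$ is linear only when $a=0$.

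Next I apply \autoref{prop:EA-EL} to decompose an arbitrary EA-automorphism. Given $\A=(L,M,N,a,b)\in\Aut_{EA}(f)$, the proposition asserts $b=f(a)$ and that $\A':=(L,M+D_{f,L,a},N)$ is an EL-automorphism of $f$. Interpreting both objects as affine permutations of $\F_{2^n}\times\F_{2^n}$, I check on points of $G_f$ that $\A_a\circ\A'=\A$: $\A'$ sends $(x,f(x))$ to $(L(x),f(L(x)))$, and then $\A_a$ sends that to $(L(x)+a,\,f(L(x)+a))$, which is exactly the image under $\A$. This shows $\Aut_{EA}(f)=T_f\cdot\Aut_{EL}(f)$.

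It remains to verify normality. For $\Lambda=(L,M,N)\in\Aut_{EL}(f)$, a direct computation of $\Lambda\circ\A_a\circ\Lambda^{-1}(x,y)$ gives first coordinate $x+L(a)$ and second coordinate $y+N(B_a(L^{-1}(x)))+N(f(a))+M(a)$. Using the EL-relation $N(f(u))=f(L(u))+M(u)$ three times (on $u=L^{-1}(x)+a$, $u=L^{-1}(x)$ and $u=a$) and cancelling in characteristic~$2$, the $x$-dependent part collapses to $B_{L(a)}(x)$ and the constant part to $f(L(a))$; hence
\[
 \Lambda\circ\A_a\circ\Lambda^{-1}\;=\;\A_{L(a)}\;\in\;T_f.
\]
So $T_f$ is normal and the semidirect product decomposition $\Aut_{EA}(f)=T_f\rtimes\Aut_{EL}(f)$ follows. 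The only real work is the two character-$2$ identities used in steps two and four, both of which are routine once $B_a$ is introduced; the genuine content is the observation that the ``translation correction'' $D_{f,L,a}$ provided by \autoref{prop:EA-EL} is precisely the map $B_a\circ L$ needed to realise $\A$ as $\A_a\circ\A'$.
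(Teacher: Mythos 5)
Your proof is correct and follows essentially the same route as the paper: your $\A_a$ is exactly the paper's translation automorphism $\tau_a$ (with $B_a = D_{f,I,a}$), the decomposition $\A = \A_a\circ\A'$ via \autoref{prop:EA-EL} is the paper's decomposition, and your conjugation identity $\Lambda\circ\A_a\circ\Lambda^{-1}=\A_{L(a)}$ is just the paper's relation $\tau_a\circ\varphi=\varphi\circ\tau_{L^{-1}(a)}$ rewritten. The only (self-repaired) looseness is that checking $\A_a\circ\A'=\A$ on points of $G_f$ alone would not force equality of the affine maps, but your closing observation that $D_{f,L,a}=B_a\circ L$ gives that equality directly.
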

	
	\begin{proof}
		By \autoref{prop:EA-EL}, every EA-automorphism of $f$ given by $(L,M,N,a,b)$ can be uniquely written as the composition of an EL-automorphism $\varphi$ of the shape 
		\begin{equation}\label{eq:varphi}
			\varphi: \begin{bmatrix}
				x\\
				y
			\end{bmatrix}
			\mapsto
			\begin{bmatrix}
				L & 0\\
				\tilde{M} & N
			\end{bmatrix}
			\begin{bmatrix}
				x\\
				y
			\end{bmatrix},
		\end{equation}
		where $\tilde{M}=M + D_{f,L,a}$ for $D_{f,L,a}$ as defined in \autoref{prop:EA-EL}, and a map $\tau_a$ of the shape
		\[
			\tau_a:
			\begin{bmatrix}
				x\\
				y
			\end{bmatrix}
			\mapsto 
			\begin{bmatrix}
				I & 0\\
				D_{f,I,a} & I
			\end{bmatrix}
			\begin{bmatrix}
				x\\
				y
			\end{bmatrix}
			+
			\begin{bmatrix}
				a\\
				f(a)
			\end{bmatrix},
		\]
		where $I$ is the identity map on $\F_{2^n}$.\par
		
		Note that the set of all $\varphi$ is $\Aut_{EL}(f)$. Clearly, $\tau_a$ is also an EA-automorphism of $f$ mapping $(x,f(x))$ to $(x+a, f(x+a))$ for any $x\in \F_{2^n}$. The set of all $\tau_a$ with $a\in \F_{2^n}$ forms a subgroup $T_f$ of $\Aut_{EA}(f)$, which is isomorphic to $(\F_{2^n}, +)$. Hence $\Aut_{EA}(f) = T_f \Aut_{EL}(f)$. Moreover, it is obvious that the identity map on $\F_{2^n} \times \F_{2^n}$ is the unique common element of $T_f$ and $\Aut_{EL}(f)$\par
		
		It remains to show that $T_f$ is a normal subgroup of $\Aut_{EA}(f)$. We do so by verifying that
		\begin{equation}
		\label{eq:Tf_normal}
			\tau_{a} \circ \varphi=\varphi \circ \tau_{L^{-1}(a)}.
		\end{equation}
		A similar result was given by \textcite[Lemma~2.5]{dempwolff2014}. The left-hand side of~\cref{eq:Tf_normal}, $\tau_a\circ \varphi$, is exactly the EA-automorphism $(L,M,N,a,b)$ we decomposed above. The right-hand side of~\cref{eq:Tf_normal}, $\varphi \circ \tau_{L^{-1}(a)}$, maps $(x,f(x))$ to 
		\begin{equation}
		\label{eq:normal_tau}
			\begin{split}
				\varphi \circ \tau_{L^{-1}(a)}
				\begin{bmatrix}
					x\\f(x)
				\end{bmatrix}
				&=
				\begin{bmatrix}
					L & 0\\
					\tilde{M} & N
				\end{bmatrix}
				\begin{bmatrix}
					x+L^{-1}(a)\\
					f(x+L^{-1}(a))
				\end{bmatrix}\\
				&=
				\begin{bmatrix}
					L(x)+a\\N(f(x+L^{-1}(a))) +\tilde{M}(x+L^{-1}(a))
				\end{bmatrix}.	
			\end{split}
		\end{equation}
		
		We consider
		\begin{equation}
		\label{eq:normal_tau_2}
			N(f(x+L^{-1}(a))) +\tilde{M}(x+L^{-1}(a)).
		\end{equation}
		Adding $N(f(x)) + N(f(L^{-1}(a)))$ twice and using the definition of $\tilde{M}$, \cref{eq:normal_tau_2} equals
		\begin{align*}
			&N(f(x)) + M(x) +N(f(L^{-1}(a))+ M(L^{-1}(a))\\
			&\qquad+D_{f,L,a}(x)  +N(f(x+L^{-1}(a)))+N(f(x))+N(f(L^{-1}(a))) +D_{f,L,a}(L^{-1}(a)).
		\end{align*}
		First, note that $D_{f,L,a}(L^{-1}(a))=0$. Second, as we have $f(L(x)) = N(f(x)) +M(x) +D_{f,L,a}(x)$ by the definition of $\varphi$, it follows that
		\begin{align*}
			&N(f(x+L^{-1}(a))) + N(f(x)) + N(f(L^{-1}(a)))\\
			&\qquad\qquad\qquad = f(L(x)+a) + f(L(x)) + f(a) = D_{f,L,a}(x).
		\end{align*}
		Third, using the same reasoning as before and recalling that $D_{f,L,a}(L^{-1}(a))=0$, we have
		\[
		N(f(L^{-1}(a))+ M(L^{-1}(a))=f(a)+D_{f,L,a}(L^{-1}(a))=f(a).
		\]
		Consequently, we obtain
		\[
			N(f(x+L^{-1}(a))) +\tilde{M}(x+L^{-1}(a)) = N(f(x)) + M(x) + f(a),
		\]
		which, considering \cref{eq:normal_tau}, means that $\varphi \circ \tau_{L^{-1}(a)}$ also describes the EA-automorphism $(L,M,N,a,b)$. Therefore, by definition, $\Aut_{EA}(f)= T_f \rtimes \Aut_{EL}(f)$.
	\end{proof}
	
	We remark that \autoref{prop:AutEA_AutEL} enables us to determine the automorphism group $\Aut_{EA}(f)$ under EA-equivalence of any quadratic function $f$ on $\F_{2^n}$, also if $f(0) \ne 0$. To obtain $\Aut_{EA}(f)$, we only have to apply a conjugation of translation on $\Aut_{EA}(f+f(0))$, which we can determine with \autoref{prop:AutEA_AutEL}.\par

	Regarding the automorphism groups of quadratic APN functions, we may say even more: the following lemma follows from \citeauthor{yoshiara2012}'s~\cite{yoshiara2012} proof of \autoref{prop:yoshiara} in combination with a result by \textcite[Theorem~4.10]{dempwolff2014}.
	
	\begin{lemma}
		\label{lem:CCZ_EA}
		Let $f$ be a quadratic APN function on the finite field $\F_{2^n}$, where $n \ge 4$. Then
		\[
		\Aut(f) = \Aut_{EA}(f).
		\]
	\end{lemma}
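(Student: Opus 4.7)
The inclusion $\Aut_{EA}(f) \subseteq \Aut(f)$ is immediate from the definitions, since every EA-mapping is in particular an affine permutation of $\F_{2^n}\times\F_{2^n}$ preserving the graph $G_f$. So the content of the lemma is the reverse inclusion: every CCZ-automorphism of a quadratic APN function is automatically an EA-automorphism. Note that this is a genuinely stronger statement than \autoref{prop:yoshiara}, which only asserts the \emph{existence} of an EA-equivalence between two CCZ-equivalent quadratic APN functions—it does not a priori say that a given CCZ-mapping must itself be an EA-mapping.

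The plan is to extract exactly this stronger conclusion from the proof technique of \textcite{yoshiara2012}, together with the classification result of \textcite{dempwolff2014}. Let $\mathcal{A} \in \Aut(f)$ and write it, in block form, as
\[
\mathcal{A}\begin{bmatrix}x\\y\end{bmatrix} = \begin{bmatrix}L_1 & L_2\\ L_3 & L_4\end{bmatrix}\begin{bmatrix}x\\y\end{bmatrix} + \begin{bmatrix}a\\b\end{bmatrix},
\]
so that $\mathcal{A}$ is an EA-mapping exactly when $L_2 = 0$. The condition $\mathcal{A}(G_f) = G_f$ translates into the functional equation
\[
f\bigl(L_1(x) + L_2(f(x)) + a\bigr) = L_3(x) + L_4(f(x)) + b.
\]
The key observation, which is the geometric heart of Yoshiara's proof, is that the graph of a quadratic APN function $f$ on $\F_{2^n}$ (with $n\ge 3$) defines a dimensional dual hyperoval, and any CCZ-mapping between two such functions lifts to an isomorphism of the associated dual hyperovals. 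My plan is to invoke \textcite[Theorem~4.10]{dempwolff2014}, which classifies these isomorphisms for $n \ge 4$ and forces the "twisting" block $L_2$ to vanish: otherwise the graph would fail to be a function graph after transport, contradicting $\mathcal{A}(G_f)=G_f$. Once $L_2 = 0$, the map $\mathcal{A}$ is by definition an EA-mapping and lies in $\Aut_{EA}(f)$.

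The main obstacle is that this argument is not really \emph{within} the statement of \autoref{prop:yoshiara}—the proposition is stated as an existence result, while here we need the conclusion to hold for an arbitrary CCZ-mapping. The work is therefore in pointing out that Yoshiara's proof technique in~\cite{yoshiara2012}, when combined with \textcite[Theorem~4.10]{dempwolff2014}, actually yields the pointwise assertion that every element of $\Aut(f)$ is EA, not merely that $\Aut(f)$ and $\Aut_{EA}(f)$ act on the same CCZ-equivalence class. The dimension hypothesis $n\ge 4$ is precisely the regime in which Dempwolff's classification applies; for smaller $n$, sporadic "exotic" CCZ-automorphisms could in principle exist and the conclusion can fail.
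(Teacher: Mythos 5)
Your proposal is correct and matches the paper's treatment: the paper gives no self-contained proof of this lemma either, but derives it from exactly the two ingredients you name, namely the argument in \citeauthor{yoshiara2012}'s proof of \autoref{prop:yoshiara} combined with \textcite[Theorem~4.10]{dempwolff2014}, via the correspondence between quadratic APN functions and dimensional dual hyperovals. Your additional observation that the content is the vanishing of the off-diagonal block $L_2$ for an \emph{arbitrary} element of $\Aut(f)$ (a pointwise statement strictly stronger than the existence statement of \autoref{prop:yoshiara}) correctly identifies where the work lies and why $n\ge 4$ is needed.
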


	We close this section by introducing the general framework we use to study the equivalence of functions in the remainder of this paper. We will mostly consider functions on vector spaces of even dimension~$n = 2m$. Such functions can be represented in a \emph{bivariate} description as a map on $\Fpm^2 = \Fpm \times \Fpm$ with two coordinate functions. As all the functions we study are quadratic and have no constant term, we may use \autoref{prop:yoshiara} in combination with \autoref{prop:EA-EL} to study their CCZ-equivalence by focusing on EL-mappings. We describe EL-equivalence as follows: Two functions $f,g \colon \Fpm^2 \to \Fpm^2$, where 
	\begin{align*}
		f(x,y) = (f_1(x,y), f_2(x,y)) &&\text{and} &&g(x,y) = (g_1(x,y), g_2(x,y))
	\end{align*}
	for coordinate functions $f_1,f_2,g_1,g_2 \colon \Fpm^2 \to \Fpm$, are EL-equivalent, if there exist linear functions $L,N,M \colon \Fpm^2 \to \Fpm^2$, where $L$ and $N$ are bijective, such that
	\[
		f(L(x,y)) = N(g(x,y)) + M(x,y).
	\]
	Write 
	\begin{align*}
		L(x,y) = (L_A(x,y), L_B(x,y)) && \text{and} && M(x,y) = (M_A(x,y), M_B(x,y))
	\end{align*}
	for linear functions $L_A, L_B, M_A, M_B \colon \Fpm^2 \to \Fpm$ and 
	\[
		N(x,y) = \left(N_1(x) + N_3(y),\ N_2(x) + N_4(y)\right)
	\] 
	for linear functions $N_1, \dots, N_4 \colon \Fpm \to \Fpm$. In terms of these newly defined functions, $f$~and $g$ are EL-equivalent if both
	\begin{align}
	\label{eq:LinEquiv_1}
		f_1(L_A(x,y), L_B(x,y)) &= N_1(g_1(x,y)) + N_3(g_2(x,y)) + M_A(x,y),\\
	\label{eq:LinEquiv_2}
		f_2(L_A(x,y), L_B(x,y)) &= N_2(g_1(x,y)) + N_4(g_2(x,y)) + M_B(x,y)
	\end{align}
	hold. They are linearly equivalent if $M(x,y) = 0$.\par
	
	Equations \cref{eq:LinEquiv_1} and \cref{eq:LinEquiv_2} will form the general framework in the proof of our main theorem.

\section{Known classes of APN functions}
\label{sec:known_classes}
	In this section, we give a short overview over the currently known APN functions. In \autoref{tab:powerfunctions}, we present the known APN power functions. 
	\begin{table}[b]
		\centering
		\caption{List of known APN power functions $x \mapsto x^d$ \cite[Table~3]{pott2016}.}
		\label{tab:powerfunctions}
		\begin{tabularx}{\textwidth}{XXXl}
			\hline
			&Exponents $d$	&Conditions	&Reference\rule[-.5em]{0em}{1.5em}\\\hline
			Gold functions		&$2^i+1$		&$\gcd(i,n) = 1,\ i \le \lfloor \frac{n}{2} \rfloor$	&\cite{gold1968,nyberg1994}\rule[-.5em]{0em}{1.5em}\\
			Kasami functions		&$2^{2i}-2^i+1$	&$\gcd(i,n) = 1,\ i \le \lfloor \frac{n}{2} \rfloor$	&\cite{janwa1993,kasami1971}\rule[-.5em]{0em}{0em}\\
			Welch function		&$2^k+3$		&$n = 2k+1$	&\cite{dobbertin1999_welch}\rule[-.5em]{0em}{0em}\\
			Niho function		&$2^k+2^{\frac{k}{2}}-1$, $k$ even	& $n=2k+1$	&\cite{dobbertin1999_niho}\rule[-.5em]{0em}{0em}\\
			&$2^k+2^{\frac{3k+1}{2}}-1$, $k$ odd	&	$n=2k+1$	&\rule[-.5em]{0em}{0em}\\
			Inverse function		&$2^{2k}-1$		&$n=2k+1$	&\cite{beth1994,nyberg1994}\rule[-.5em]{0em}{0em}\\
			Dobbertin function	&$2^{4k} + 2^{3k} + 2^{2k} + 2^{k}-1$	&$n=5k$	&\cite{dobbertin2001}\rule[-.5em]{0em}{0em}\\\hline
		\end{tabularx}
	\end{table}
	This list is conjectured to be complete. APN power functions and their equivalence relations are very well studied. It is well known that the classes in \autoref{tab:powerfunctions} are in general CCZ-inequivalent. Moreover, it is, for example, known that Gold functions are inequivalent for different values of $i$; see \textcite{budaghyancarletleander2008}. \par
	
	As far as non-power APN functions are concerned, the situation becomes much less clear than for power functions. Several infinite families of non-power APN functions have been found, but only for few of them their equivalence relations are known. This includes equivalence relations both between functions from different classes as well as between functions coming from the same class. A current list of known families of APN functions that are CCZ-inequivalent to power functions was recently given by \textcite[Table~3]{budaghyan2020}. This list contains 13 distinct classes, all of which are quadratic.\par
	
	In the present paper, we study the family~(F12) from this list. It was introduced by \textcite{taniguchi2019} who used a criterion developed by \textcite{carlet2011} to prove the APN property of his functions. In \autoref{th:TaniguchiAPN}, we restate Taniguchi's~\cite{taniguchi2019} construction in bivariate representation. Its univariate form can be found in the list by \textcite{budaghyan2020}. 
	\begin{theorem}[{\cite[Theorem~3]{taniguchi2019}}]
	\label{th:TaniguchiAPN}
		Let $m \ge 2$ and $k$ be positive integers such that $\gcd(k,m) = 1$. Let $\alpha, \beta \in \Fpm$ and $\beta \ne 0$. Then the function $f_{k,\alpha,\beta}: \Fptwom \to \Fptwom$, where
		\[
			f_{k,\alpha,\beta}(x,y) = \left(x^{2^{2k}(2^k+1)} + \alpha x^{2^{2k}} y^{2^k} + \beta y^{2^k+1},\ xy\right)
		\]
		is APN if and only if the polynomial $X^{2^k+1} + \alpha X + \beta \in \Fpm[X]$ has no root.
	\end{theorem}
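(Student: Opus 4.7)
The plan is to exploit the standard bilinear characterisation of quadratic APN functions: since $f := f_{k,\alpha,\beta}$ is quadratic with $f(0,0)=(0,0)$, it is APN if and only if, for every nonzero $(a,b)\in\Fpm^2$, the only solutions $(x,y)$ of $f(x+a,y+b)+f(x,y)+f(a,b)=(0,0)$ are $(0,0)$ and $(a,b)$. A direct expansion in characteristic two yields the bilinear system
\begin{align*}
xb + ay &= 0,\\
x^{2^{3k}}a^{2^{2k}} + a^{2^{3k}}x^{2^{2k}} + \alpha(x^{2^{2k}}b^{2^k}+a^{2^{2k}}y^{2^k}) + \beta(y^{2^k}b+b^{2^k}y) &= 0.
\end{align*}
The degenerate cases $a=0$ and $b=0$ are quick: the first equation forces one coordinate to vanish, and after the substitution $u=y/b$ or $u=x/a$ the second equation reduces to $u^{2^k}=u$; since $\gcd(k,m)=1$ gives $\F_{2^k}\cap\Fpm=\F_2$, we get $u\in\{0,1\}$, yielding only the two trivial solutions.

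For the main case $a,b\in\Fpm^{*}$, the first equation gives $y=(b/a)x$, so after substituting $x=au$ and $y=bu$ the second equation becomes
\[
a^{2^{3k}+2^{2k}}(u^{2^{3k}}+u^{2^{2k}})+\alpha a^{2^{2k}}b^{2^k}(u^{2^{2k}}+u^{2^k})+\beta b^{2^k+1}(u^{2^k}+u)=0.
\]
The key observation is that each bracketed term is an iterated Frobenius image of $v:=u^{2^k}+u$, since $(u^{2^k}+u)^{2^k}=u^{2^{2k}}+u^{2^k}$ in characteristic two; hence the whole equation collapses to the linearised polynomial identity
\[
a^{2^{3k}+2^{2k}}\,v^{2^{2k}}+\alpha a^{2^{2k}}b^{2^k}\,v^{2^k}+\beta b^{2^k+1}\,v=0.
\]
Once more using $\gcd(k,m)=1$, the root $v=0$ corresponds exactly to $u\in\F_2$, i.e.\ to the two trivial solutions, so $f$ is APN if and only if this linearised polynomial has no nonzero root $v\in\Fpm$ for any pair $(a,b)\in(\Fpm^{*})^2$.

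The last step is to remove the dependence on $a$ and $b$. Dividing by $v$ and setting $t=v^{2^k-1}$---a bijection of $\Fpm^{*}$ because $\gcd(2^k-1,2^m-1)=1$---so that $v^{2^{2k}-1}=t^{2^k+1}$, the question becomes whether
\[
a^{2^{3k}+2^{2k}}\,t^{2^k+1}+\alpha a^{2^{2k}}b^{2^k}\,t+\beta b^{2^k+1}=0
\]
has a solution $t\in\Fpm^{*}$. The multiplicative rescaling $t=(b/a^{2^{2k}})\,s$ is the unique one making the three coefficients collapse proportionally to $(1,\alpha,\beta)$ (exploiting $(b/a^{2^{2k}})^{2^k}=b^{2^k}/a^{2^{3k}}$, after which the common factor $b^{2^k+1}$ divides out), reducing the equation precisely to $s^{2^k+1}+\alpha s+\beta=0$. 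Since this rescaling is a bijection of $\Fpm^{*}$ for every fixed $a,b$, the existence of the required $v$ is equivalent to the polynomial $X^{2^k+1}+\alpha X+\beta$ having a root in $\Fpm$, which finishes the proof. The main obstacle is spotting the substitution $v=u^{2^k}+u$: this is the precise linearising change of variable that collapses the derivative of $f$ into a linearised polynomial, and the rest of the argument is then routine algebra with careful bookkeeping of the Frobenius exponents.
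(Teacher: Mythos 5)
The paper does not actually prove this theorem---it restates it from Taniguchi's work---so your proposal has to stand on its own. Most of it is correct: the bilinear system, the degenerate cases, the substitution $x=au$, $y=bu$, the collapse to a linearized polynomial in $v=u^{2^k}+u$, and the rescaling $t=(b/a^{2^{2k}})s$ all check out. But there is one genuine gap, at the sentence ``the root $v=0$ corresponds exactly to $u\in\F_2$, \ldots\ so $f$ is APN if and only if this linearised polynomial has no nonzero root $v\in\Fpm$.'' The map $u\mapsto u^{2^k}+u$ is $\F_2$-linear with kernel $\F_2$, so its image is not all of $\Fpm$ but the index-two subgroup $\{v:\mathrm{Tr}(v)=0\}$ (where $\mathrm{Tr}$ is the absolute trace). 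Hence nontrivial solutions $u$ correspond to nonzero roots $v$ of $L_{a,b}(v)=a^{2^{3k}+2^{2k}}v^{2^{2k}}+\alpha a^{2^{2k}}b^{2^k}v^{2^k}+\beta b^{2^k+1}v$ \emph{lying in the trace-zero hyperplane}, which is a weaker condition than having no nonzero root at all. Your argument therefore proves only one direction of the theorem: if $X^{2^k+1}+\alpha X+\beta$ has no root, then $L_{a,b}$ has no nonzero root whatsoever and $f$ is APN. For the converse you must rule out the scenario in which $L_{a,b}$ has a unique nonzero root for every $(a,b)$ but that root always has trace $1$, in which case $f$ would be APN even though the polynomial has a root.

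The gap is repairable, because the roots of $L_{a,b}$ move as $(a,b)$ varies. If $s_0$ is a root of $X^{2^k+1}+\alpha X+\beta$ (necessarily nonzero since $\beta\neq 0$), then fixing $a=1$ and letting $b$ range over $\Fpm^{*}$, the corresponding root $v_0=(bs_0)^{(2^k-1)^{-1}\bmod (2^m-1)}$ of $L_{1,b}$ ranges over all of $\Fpm^{*}$; choosing $b$ so that $\mathrm{Tr}(v_0)=0$ (possible since the trace-zero hyperplane has $2^{m-1}-1\geq 1$ nonzero elements) produces a pair $(1,b)$ at which the derivative has a nontrivial kernel, so $f$ is not APN. (Alternatively, when the polynomial has three roots the kernel of $L_{a,b}$ is a two-dimensional $\F_2$-space and automatically meets any hyperplane nontrivially; the one-root case is the one that genuinely needs the varying-$(a,b)$ argument.) With this addition your proof is complete; without it, the ``only if'' half of the statement is unproven.
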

	
	We remark that the Taniguchi APN functions from \autoref{th:TaniguchiAPN} are quadratic. In the following lemma we specify the case $\alpha = 0$.
	
	\begin{lemma}
	\label{lem:alpha=0}
		A Taniguchi function $f_{k,0,\beta}$ on $\Fptwom$ is APN if and only if $m$ is even and $\beta$ is a non-cube in $\Fpm^*$.
	\end{lemma}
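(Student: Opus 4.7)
The plan is to apply Theorem~\ref{th:TaniguchiAPN} with $\alpha = 0$ and then perform a purely arithmetic analysis of when $\beta$ fails to be a $(2^k+1)$-th power in $\F_{2^m}^*$.

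First I would specialize Theorem~\ref{th:TaniguchiAPN}: for $\alpha=0$, the polynomial $X^{2^k+1}+\beta$ has no root in $\F_{2^m}$ if and only if $\beta$ lies outside the image of the endomorphism $\psi\colon \F_{2^m}^*\to \F_{2^m}^*$, $x\mapsto x^{2^k+1}$ (note $\beta\neq0$, so $X=0$ is never a root). Since $\F_{2^m}^*$ is cyclic of order $2^m-1$, the image of $\psi$ is the unique subgroup of index $d:=\gcd(2^k+1,\,2^m-1)$.

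Next I would compute $d$ using the standard identity
\[
\gcd(2^k+1,\,2^m-1)=\frac{\gcd(2^{2k}-1,\,2^m-1)}{\gcd(2^k-1,\,2^m-1)}=\frac{2^{\gcd(2k,m)}-1}{2^{\gcd(k,m)}-1}.
\]
Combined with the hypothesis $\gcd(k,m)=1$, a short case distinction on the parity of $m$ gives $\gcd(2k,m)=1$ when $m$ is odd and $\gcd(2k,m)=2$ when $m$ is even (in the latter case $k$ must be odd). Hence $d=1$ if $m$ is odd and $d=3$ if $m$ is even.

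Finally I would draw the conclusion in the two cases. If $m$ is odd, then $\psi$ is surjective and every $\beta\in\F_{2^m}^*$ equals some $x^{2^k+1}$, so $X^{2^k+1}+\beta$ always has a root and $f_{k,0,\beta}$ cannot be APN. If $m$ is even, then $3\mid 2^m-1$, and the image of $\psi$ is the unique index-$3$ subgroup of $\F_{2^m}^*$, which coincides with the group of cubes $\{x^3:x\in\F_{2^m}^*\}$; therefore $\beta\notin\operatorname{im}\psi$ is equivalent to $\beta$ being a non-cube. Combining the two cases yields exactly the stated equivalence. No step looks delicate; the only care needed is in the elementary gcd computation in the even-$m$ case, but this is routine.
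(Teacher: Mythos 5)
Your proposal is correct and follows essentially the same route as the paper: specialize Theorem~\ref{th:TaniguchiAPN} to $\alpha=0$ and reduce everything to the computation $\gcd(2^k+1,2^m-1)=1$ or $3$ according to the parity of $m$, whence $X^{2^k+1}+\beta$ is a permutation polynomial for odd $m$ and has image the cubes for even $m$. The only difference is that you spell out the gcd identity that the paper states without derivation, which is fine.
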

	\begin{proof}
		According to \autoref{th:TaniguchiAPN}, the function $f_{k,0,\beta}$ is APN if and only if the polynomial $P(X) \in \Fpm[X]$, where $P(X) = X^{2^k+1} + \beta$, has no root. Recall that $m$ and $k$ are coprime. Hence,
		\[
			\gcd(2^k+1,2^m-1) =
			\begin{cases}
			1,	&\text{if $m$ is odd},\\
			3,	&\text{if $m$ is even}.
			\end{cases}
		\]
		Consequently, if $m$ is odd, $P(X)$ is a permutation polynomial and, thus, always has a root. If $m$ is even, however, then $P(X)$ has a root if and only if $\beta$ is a cube.
	\end{proof}
	
	The following lemma provides insight on the total number of Taniguchi APN functions for given $m$ and $k$---without considering equivalence---by giving the number of admissible~$\beta \in \Fpm^*$. This result is due to \textcite[Theorem~5.6]{bluher2004} who proved it in a more general setting. In the specific form of the present paper, the result was also obtained by \textcite{hellesethkholosha2008}.
	
	\begin{lemma}
	\label{lem:number_of_beta}
		Let $k,m$ be coprime integers such that $0 < k < m$. The number of $\beta \in \Fpm^*$ such that the polynomial $X^{2^k+1} + X + \beta$ has no roots in $\Fpm$ is $\frac{2^m-1}{3}$ if $m$ is even and $\frac{2^m+1}{3}$ if $m$ is odd.
	\end{lemma}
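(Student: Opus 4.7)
The plan is to analyze the map $\varphi\colon \Fpm \to \Fpm$ defined by $\varphi(x) = x^{2^k+1} + x$: since $X^{2^k+1} + X + \beta$ has a root in $\Fpm$ if and only if $\beta \in \mathrm{Im}(\varphi)$, what I want to compute is $n_0 := |\Fpm^* \setminus \mathrm{Im}(\varphi)|$. The key structural input used throughout is the identity $\gcd(2^k - 1, 2^m - 1) = 2^{\gcd(k,m)} - 1 = 1$, which makes several auxiliary $\F_2$-linear maps on $\Fpm$ have two-element kernels.

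I first determine the possible fiber sizes. From $x(x^{2^k} + 1) = 0$ and the coprimality above, the only nonzero solution of $\varphi(x) = 0$ is $x = 1$, so $|\varphi^{-1}(0)| = 2$. For $\beta \in \Fpm^*$ with some root $x_0$ of $X^{2^k+1} + X + \beta$, substituting $X = x_0 + Y$ and using $\varphi(x_0) = \beta$ reduces the equation to
\[
    Y\bigl(Y^{2^k} + x_0 Y^{2^k-1} + (x_0+1)^{2^k}\bigr) = 0.
\]
Since $\beta \ne 0$ forces $x_0 \notin \{0, 1\}$, the substitution $Y = (x_0+1)/W$ converts the second factor to $W^{2^k} + tW + 1 = 0$ with $t = x_0/(x_0+1) \in \Fpm^*$. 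The $\F_2$-linear map $L_t(W) = W^{2^k} + tW$ has kernel $\{0, W_0\}$ of size $2$, because the coprimality makes $W \mapsto W^{2^k-1}$ a bijection of $\Fpm^*$ and hence $W_0^{2^k-1} = t$ has a unique solution. Thus $L_t$ has image of index $2$, so $L_t(W) = 1$ has either $0$ or $2$ solutions. Translating back, $|\varphi^{-1}(\beta)| \in \{1, 3\}$ whenever nonempty, hence $|\varphi^{-1}(\beta)| \in \{0, 1, 3\}$ for all $\beta \in \Fpm^*$.

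With this dichotomy, set $n_i := |\{\beta \in \Fpm^* : |\varphi^{-1}(\beta)| = i\}|$ for $i \in \{0, 1, 3\}$. Counting both elements and total preimages gives $n_0 + n_1 + n_3 = 2^m - 1$ and $n_1 + 3 n_3 = 2^m - 2$, so $n_0 = 1 + 2 n_3$, and it remains to compute $n_3$. I would do so via the number $T := |\{(x,y) \in \Fpm^2 : x \ne y,\ \varphi(x) = \varphi(y)\}|$, which by summing $i(i-1)$ over the fibers equals $2 + 6 n_3$. Writing $y = x+z$ with $z \ne 0$ and then setting $u = x/z$, $s = 1/z$, the collision condition $\varphi(x) = \varphi(x+z)$ simplifies to the $\F_2$-linear equation $u^{2^k} + u = 1 + s^{2^k}$ in $u$, parameterized by $s \in \Fpm^*$.

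The map $u \mapsto u^{2^k} + u$ on $\Fpm$ has kernel $\F_2$ and image $\{c : \mathrm{Tr}(c) = 0\}$, so for each $s \in \Fpm^*$ the equation has $2$ solutions precisely when $\mathrm{Tr}(1) + \mathrm{Tr}(s) = 0$ and none otherwise. For odd $m$ we have $\mathrm{Tr}(1) = 1$, so the condition reduces to $\mathrm{Tr}(s) = 1$; there are $2^{m-1}$ such $s$ in $\Fpm^*$, giving $T = 2^m$, hence $n_3 = (2^m-2)/6$ and $n_0 = (2^m+1)/3$. For even $m$ we have $\mathrm{Tr}(1) = 0$, so the condition reduces to $\mathrm{Tr}(s) = 0$ with $s \ne 0$; there are $2^{m-1} - 1$ such $s$, giving $T = 2^m - 2$, hence $n_3 = (2^m-4)/6$ and $n_0 = (2^m-1)/3$. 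The main obstacle is the fiber-size rigidity in the second paragraph: ruling out fibers of size $2$ over nonzero $\beta$ is what makes the two linear relations and the single moment $T$ enough to pin down $n_0$; without that rigidity the counting system would be underdetermined.
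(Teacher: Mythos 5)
Your proof is correct, and it is genuinely different from what the paper does: the paper gives no proof of this lemma at all, but instead cites it as a special case of Bluher's Theorem~5.6 on the root distribution of projective polynomials $X^{q+1}+aX+b$ (also crediting Helleseth--Kholosha for the binary case). Your argument is a self-contained, elementary replacement. The two ingredients check out: the fiber-size rigidity ($|\varphi^{-1}(\beta)|\in\{0,1,3\}$ for $\beta\ne 0$, which you get from the substitution $X=x_0+Y$, $Y=(x_0+1)/W$ reducing the residual factor to the affine linear equation $W^{2^k}+tW=1$ with two-element kernel, since $\gcd(2^k-1,2^m-1)=1$), and the second-moment count $T=2+6n_3$ evaluated via the collision equation $u^{2^k}+u=1+s^{2^k}$, whose solvability is governed by $\mathrm{Tr}(1)+\mathrm{Tr}(s)$; the parity split $\mathrm{Tr}(1)=m\bmod 2$ is exactly what produces the $\pm 1$ in $\frac{2^m\mp 1}{3}$. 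I verified the endgame arithmetic ($n_0=1+2n_3$ with $n_3=(2^m-2)/6$ resp.\ $(2^m-4)/6$) and the small cases $m=3,4$ against the counts quoted in the paper's proof of Theorem~4.4. What Bluher's theorem buys that your argument does not is generality (arbitrary $q$, arbitrary subfield conditions, and the full distribution of root counts including $2^{\gcd(k,m)}+1$ roots), which the paper needs again later in Lemma~5.2; your fiber-rigidity step is essentially the binary special case of Helleseth--Kholosha's ``more than one root implies exactly three roots'' that the paper also invokes there. What your argument buys is that the lemma becomes self-contained and the mechanism (an index-two linear image detected by the trace) is completely transparent.
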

	
	In \autoref{th:ZhouPottAPN}, we present another family of APN functions, which is closely related to \citeauthor{taniguchi2019}'s~\cite{taniguchi2019} construction from \autoref{th:TaniguchiAPN}. It was introduced by Pott and the second author~\cite{zhou2013}, and \textcite{anbar2019} showed that the conditions on the parameters are not only sufficient but also necessary. The equivalence problem of these APN functions was recently solved by the present authors~\cite{kasperszhou2020}.
	
	\begin{theorem}[{\cite[Corollary~2]{zhou2013} and \cite[Proposition~3.5]{anbar2019}}]
		\label{th:ZhouPottAPN}
		Let $m$ be an even integer and let $k,s$ be integers, $0 \le k,s \le m$, such that $\gcd(k,m)=1$. Let $\alpha \in \Fpm^*$. The function $g_{k,s,\alpha}:\Fptwom \to \Fptwom$ defined as
		\[
			g_{k,s,\alpha}(x,y) = \left(x^{2^k+1} + \alpha y^{2^s(2^k+1)},\ xy\right)
		\]
		is APN if and only if $s$ is even and $\alpha$ is a non-cube.
	\end{theorem}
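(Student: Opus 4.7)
The plan is to analyze the APN property of $g_{k,s,\alpha}$ directly via its second-order derivative, then to reduce the question to an intersection problem between two subsets of $\Fpm^*$ related to the subgroup of cubes. Since $g_{k,s,\alpha}$ is quadratic and vanishes at $0$, it is APN exactly when, for every nonzero $(a,b)\in\Fpm^2$, the $\F_2$-linear kernel of $(u,v)\mapsto g_{k,s,\alpha}(u+a,v+b)+g_{k,s,\alpha}(u,v)+g_{k,s,\alpha}(a,b)$ equals $\{(0,0),(a,b)\}$. Writing this out produces the system
\begin{align*}
a^{2^k} u + a u^{2^k} + \alpha\bigl(b^{2^{s+k}} v^{2^s} + b^{2^s} v^{2^{s+k}}\bigr) &= 0,\\
ub + va &= 0,
\end{align*}
and the edge cases $a=0$ or $b=0$ are handled by the hypothesis $\gcd(k,m)=1$, which implies $\gcd(2^k-1,2^m-1)=1$ and forces $w^{2^k-1}=1$ to have the unique solution $w=1$.

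In the main case $a,b\in\Fpm^*$, substituting $w:=u/a$ into the second equation yields $v=wb$. Plugging back into the first equation and using $(w+w^{2^k})^{2^s}=w^{2^s}+w^{2^{s+k}}$, it collapses to
\[
z\bigl[\,a^{2^k+1} + \alpha\, b^{2^s(2^k+1)}\, z^{2^s-1}\bigr]=0,\qquad z:=w+w^{2^k}.
\]
The factor $z=0$ contributes exactly $w\in\{0,1\}$, i.e.\ the two trivial kernel elements. The map $w\mapsto w+w^{2^k}$ has kernel $\{0,1\}$, and since $\mathrm{Tr}(w+w^{2^k})=0$, a dimension count identifies its image with the trace-zero hyperplane $\{z\in\Fpm:\mathrm{Tr}(z)=0\}$. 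Setting $\delta:=a/b^{2^s}$, which covers all of $\Fpm^*$ as $(a,b)$ does, APN becomes equivalent to the statement that for every $\delta\in\Fpm^*$ the equation $z^{2^s-1}=\delta^{2^k+1}/\alpha$ has no solution $z\in\Fpm^*$ with $\mathrm{Tr}(z)=0$.

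For the sufficiency direction, under $\gcd(k,m)=1$ with $m$ even one has $\gcd(2^k+1,2^m-1)=3$, so $\delta^{2^k+1}$ ranges over the cube subgroup $C\leq\Fpm^*$ and the right-hand side ranges over the coset $\alpha^{-1}C$. When $s$ is even, $3\mid 2^s-1$ gives $z^{2^s-1}\in C$ for all $z$, and disjointness of $C$ from $\alpha^{-1}C$ is equivalent to $\alpha\notin C$, which is exactly the condition that $\alpha$ be a non-cube. The converse, due to \textcite{anbar2019}, is the harder direction: one must show that if $s$ is odd, or $\alpha$ is a cube, or $m$ is odd, then a trace-zero $z\neq 0$ fulfilling the offending equation can be produced. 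The case where $\alpha$ is a cube (with $s$ even) reduces to finding a nonzero trace-zero $z$ with $z^{2^s-1}\in C$, which is immediate; the case $m$ odd is easy too, since $\gcd(2^k+1,2^m-1)=1$ makes the right-hand side fill $\Fpm^*$. The main obstacle is the case $s$ odd: there $\gcd(2^s-1,3)=1$, so $z\mapsto z^{2^s-1}$ on $\Fpm^*$ is surjective onto a subgroup that meets every coset of $C$, and one needs a careful counting or character-sum estimate to verify that this surjectivity persists once $z$ is restricted to the trace-zero hyperplane.
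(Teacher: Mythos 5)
The paper itself offers no proof of this statement---it is quoted from \textcite{zhou2013} and \textcite{anbar2019}---so your attempt has to stand on its own. Your reduction is correct and well executed: the second-order derivative computation, the substitution $w=u/a$, $v=wb$, the factorization $z\bigl(a^{2^k+1}+\alpha b^{2^s(2^k+1)}z^{2^s-1}\bigr)=0$ with $z=w+w^{2^k}$, the identification of the image of $w\mapsto w+w^{2^k}$ with the trace-zero hyperplane, and the resulting criterion ``APN iff for all $\delta\in\Fpm^*$ the equation $z^{2^s-1}=\delta^{2^k+1}/\alpha$ has no nonzero trace-zero solution'' are all right, as are the edge cases $a=0$ or $b=0$. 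The ``if'' direction ($s$ even, $\alpha$ a non-cube) is complete, since there the equation has no solution in $\Fpm^*$ at all, and the necessity of $\alpha$ being a non-cube when $s$ is even is also handled correctly.

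The genuine gap is the case $s$ odd, which you explicitly defer to ``a careful counting or character-sum estimate.'' That deferred step is the entire content of the hard direction, and it is more delicate than your sketch suggests. Since $2^s-1\equiv 1\pmod 3$ for $s$ odd, the map $z\mapsto z^{2^s-1}$ induces the identity on $\Fpm^*/C$, so what you actually need is that the coset $\alpha^{-1}C$ of the cubes contains a nonzero element of trace zero. This does \emph{not} follow from the group-theoretic surjectivity you invoke: the set of trace-zero elements is not a union of cosets of $C$, and a priori some coset could miss it entirely. Indeed, for $m=2$ the cubes are $\{1\}$ and $1$ is the only nonzero trace-zero element, so the needed statement is false there (consistently, for $m=2$ the exponents $2^s(2^k+1)$ for $s=0$ and $s=1$ define the same map on $\F_4$, so $s$ odd does not actually destroy the APN property in that degenerate case). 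For larger even $m$ one has to prove the coset--hyperplane intersection is nonempty, e.g.\ via Gauss sums attached to the cubic character plus a check of small cases, or by the algebraic argument of \textcite{anbar2019}. Until that is supplied, the ``only if'' half of the theorem is unproved.
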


	In the following \autoref{lem:ZP_trivial_equivalence} and \autoref{th:ZP_equivalence}, we restate two results by the present authors~\cite{kasperszhou2020} about the equivalence between Pott-Zhou APN functions that we will need to study the equivalence relations between Taniguchi APN functions in \autoref{sec:Taniguchi_equivalence}.
	
	\begin{lemma}[{\cite[Lemma~5.1]{kasperszhou2020}}]
		\label{lem:ZP_trivial_equivalence}
		Let $m \ge 2$ be an even integer. Let $k,\ell$ be integers coprime to~$m$ such that $0 < k,\ell < m$, and let $s,t$ be even integers with $0 \le s,t \le m$. Let $\alpha, \alpha' \in \Fpm^*$ be non-cubes. The two APN functions $g_{k,s,\alpha},g_{\ell,t,\alpha'}$ on $\Fptwom$ from \autoref{th:ZhouPottAPN} are linearly equivalent
		\begin{enumerate}[label=(\alph*),ref=(\alph*)]
			\item\label{item:non-cubics} if $k = \ell$ and $s=t$, no matter which non-cubes $\alpha$ and $\alpha'$ we choose,
			\item\label{item:k=-l,s=-t} if $k \equiv \pm \ell \pmod{m}$ and $s \equiv \pm t\pmod{m}$.
		\end{enumerate}
	\end{lemma}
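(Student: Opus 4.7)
The plan is to construct explicit linear equivalence maps of a very restricted form---diagonal scalings on $\Fpm^2$, the coordinate swap $(x,y)\mapsto(y,x)$, and Frobenius powers---and to check that these elementary maps already suffice.

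For part~(a), I would first try the ansatz $L(x,y)=(ax,by)$ together with a diagonal scaling $N$ on the codomain. Matching the two components of $g_{k,s,\alpha'}\circ L = N\circ g_{k,s,\alpha}$ collapses, after using $ab=d$ for the second coordinate, to the single field equation $(b^{2^s}/a)^{2^k+1}=\alpha/\alpha'$. Because $\gcd(k,m)=1$ with $m$ even forces $k$ odd and hence $\gcd(2^k+1,2^m-1)=3$, the image of the $(2^k+1)$-power map on $\Fpm^*$ is precisely the cube subgroup. Thus this ansatz succeeds exactly when $\alpha/\alpha'$ is a cube. To bridge the two non-cube cosets, I would invoke the Frobenius $\phi(u)=u^2$: choosing $L=N=\phi\times\phi$ gives an $\F_2$-linear equivalence $g_{k,s,\alpha}\sim g_{k,s,\alpha^2}$, and squaring interchanges the two non-cube cosets of $\Fpm^*$ since $\alpha^2/\alpha=\alpha$ is itself a non-cube. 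Composing the Frobenius step with the scaling step reaches any prescribed non-cube $\alpha'$ from $\alpha$.

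For part~(b), I would reduce the two possible sign flips to part~(a) using two independent moves. To replace $k$ by $-k$, apply the $\F_2$-linear map $u\mapsto u^{2^{-k}}$ to the first output coordinate; this sends $x^{2^k+1}+\alpha y^{2^s(2^k+1)}$ to $x^{2^{-k}+1}+\alpha^{2^{-k}}y^{2^s(2^{-k}+1)}$, while $xy$ is preserved, yielding $g_{k,s,\alpha}\sim g_{-k,s,\alpha^{2^{-k}}}$. To replace $s$ by $-s$, precompose with the input swap $L(x,y)=(y,x)$ and apply $u\mapsto u^{2^{-s}}$ to the first output coordinate: the second coordinate remains $xy$, and the first becomes $x^{2^k+1}+\alpha^{2^{-s}}y^{2^{-s}(2^k+1)}$, matching $g_{k,-s,\alpha^{2^{-s}}}$ (with $-s\equiv m-s$ still even because both $m$ and $s$ are). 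Composing these two moves as dictated by the desired signs, and then invoking part~(a) to absorb the resulting constant, produces a linear equivalence of $g_{k,s,\alpha}$ and $g_{\ell,t,\alpha'}$ whenever $k\equiv\pm\ell$ and $s\equiv\pm t\pmod{m}$.

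The step I expect to require the most care is the bridging between the two non-cube cosets in part~(a): purely diagonal $\Fpm$-scalings multiply $\alpha$ only by $(2^k+1)$-th powers, i.e., by cubes, so they are intrinsically blind to the $\mathbb{Z}/3$-quotient $\Fpm^*/(\Fpm^*)^3$. Recognising that the Frobenius $x\mapsto x^2$ performs precisely the missing transposition of non-cube cosets, while still delivering an $\F_2$-linear equivalence compatible with the bivariate structure, is the observation that lets the whole argument go through with only such elementary maps.
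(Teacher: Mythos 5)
Your proposal is correct: the diagonal-scaling ansatz for (a) reduces to hitting $\alpha'/\alpha$ with a $(2^k+1)$-th power, which covers exactly the cube cosets, and the Frobenius conjugation correctly supplies the missing transposition of the two non-cube cosets; the swap and $u\mapsto u^{2^{-k}}$, $u\mapsto u^{2^{-s}}$ output twists for (b) check out (up to a harmless normalizing constant that part (a) absorbs, as you note). The paper itself only cites this lemma from the authors' earlier work \cite{kasperszhou2020}, but the proof there, like the analogous \autoref{prop:trivial_equivalences} for Taniguchi functions in this paper, uses exactly these explicit linearized-monomial equivalence maps, so your route is essentially the same as the original.
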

	
	\begin{theorem}[{\cite[Theorem~1.1]{kasperszhou2020}}]
		\label{th:ZP_equivalence}
		Let $m \ge 4$ be an even integer. Let $k,\ell$ be integers coprime to $m$ such that $0 < k,\ell < \frac{m}{2}$, let $s,t$ be even integers with $0 \le s,t \le \frac{m}{2}$, and let $\alpha,\alpha' \in \Fpm^*$ be non-cubes. Two Pott-Zhou APN~functions $g_{k,s,\alpha}, g_{\ell,t,\alpha'}$ on $\Fptwom$ from \autoref{th:ZhouPottAPN}, are CCZ-equivalent if and only if $k = \ell$ and $s = t$.
	\end{theorem}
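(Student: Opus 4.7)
My plan is to use the two reductions provided by \autoref{prop:yoshiara} and \autoref{prop:EA-EL}. Since $g_{k,s,\alpha}$ and $g_{\ell,t,\alpha'}$ are quadratic APN functions on $\Fptwom$ with $g_{k,s,\alpha}(0,0) = g_{\ell,t,\alpha'}(0,0) = 0$, any CCZ-equivalence between them can be realized by an EL-mapping $(L,M,N)$ satisfying $g_{k,s,\alpha}(L(x,y)) = N(g_{\ell,t,\alpha'}(x,y)) + M(x,y)$. The ``if'' direction of the theorem is immediate from \autoref{lem:ZP_trivial_equivalence}\ref{item:non-cubics}; the heart of the proof is therefore to show that the existence of such an EL-mapping forces $k = \ell$ and $s = t$ under the stated range restrictions.

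To exploit the EL-mapping, I would write all linear parts as bivariate linearized polynomials over $\Fpm$: set
\[
L_A(x,y) = \sum_{i=0}^{m-1} \bigl(a_i x^{2^i} + b_i y^{2^i}\bigr), \qquad L_B(x,y) = \sum_{i=0}^{m-1} \bigl(c_i x^{2^i} + d_i y^{2^i}\bigr),
\]
and analogously for $M_A, M_B$, while each $N_j\colon \Fpm \to \Fpm$ is a univariate linearized polynomial. Plugging these into the framework equations \eqref{eq:LinEquiv_1} and \eqref{eq:LinEquiv_2} yields two polynomial identities in $\Fpm[x,y]/(x^{2^m}-x, y^{2^m}-y)$, and the strategy is to compare coefficients monomial by monomial.

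The most informative equation to tackle first is the one arising from the second coordinate, $L_A(x,y) L_B(x,y) = N_2(g_1(x,y)) + N_4(xy) + M_B(x,y)$. Its left-hand side is bilinear in the coefficient families and supports only monomials of the shape $x^{2^i+2^j}$, $x^{2^i}y^{2^j}$, $y^{2^i+2^j}$. Its right-hand side has very restricted monomial support: $N_2 \circ g_1$ only contributes Frobenius shifts of $x^{2^\ell+1}$ and $y^{2^t(2^\ell+1)}$, the composition $N_4(xy)$ only contributes diagonal monomials $x^{2^i} y^{2^i}$, and $M_B$ only contributes linear ones. Matching coefficients should force $L_A$ and $L_B$ to be supported on a tiny set of Frobenius shifts determined by $\ell$ and $t$, essentially pinning each of $L_A, L_B$ down to one $x$-monomial and one $y$-monomial.

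With $L_A, L_B$ narrowed down, I would substitute into the first-coordinate equation $L_A^{2^k+1} + \alpha L_B^{2^s(2^k+1)} = N_1(g_1(x,y)) + N_3(xy) + M_A(x,y)$. The exponents on the left are Frobenius shifts of $2^k+1$ and $2^s(2^k+1)$, while those on the right are shifts of $2^\ell+1$ and $2^t(2^\ell+1)$ together with diagonal contributions from $N_3(xy)$. Comparing the two exponent sets, while invoking the restriction $0 < k, \ell < m/2$ and $0 \le s, t \le m/2$ to eliminate the $\pm$ ambiguity from \autoref{lem:ZP_trivial_equivalence}\ref{item:k=-l,s=-t}, should force $k = \ell$ and $s = t$. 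The main obstacle I anticipate is the case analysis: the coupled equations split into several subcases (for instance, $s = 0$ versus $s > 0$, collapses where two exponents of the form $2^i+2^j$ coincide, and compatibility conditions linking the cube classes of $\alpha$ and $\alpha'$ with the scalars coming out of $N_1, N_3$), and ruling out the spurious solutions that would not preserve the parameters is where the bulk of the bookkeeping lies.
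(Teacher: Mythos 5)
Your overall strategy --- reduce CCZ- to EL-equivalence via \autoref{prop:yoshiara} and \autoref{prop:EA-EL}, expand all linear parts as linearized polynomials inside \cref{eq:LinEquiv_1} and \cref{eq:LinEquiv_2}, and compare monomial supports --- is exactly the route the authors take. Note, though, that this paper does not actually prove \autoref{th:ZP_equivalence}: it is imported from \cite[Theorem~1.1]{kasperszhou2020}, and only the analogous statement for Taniguchi functions, \autoref{th:Taniguchi_equivalence}, is proved here, with a proof explicitly modelled on the one in \cite{kasperszhou2020}. The ``if'' direction via \autoref{lem:ZP_trivial_equivalence}\ref{item:non-cubics} is fine.

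There is, however, a genuine gap in the step you lean on most heavily. You claim that the second-coordinate identity $L_A L_B = N_2(g_1) + N_4(xy) + M_B$ already pins $L_A$ and $L_B$ down to one $x$-monomial plus one $y$-monomial each. That fails in the degenerate case $N_2 = 0$: restricting to $y=0$, the condition becomes ``$L_A(x,0)L_B(x,0)$ is a linearized polynomial'', which is satisfied whenever the $x$-parts of $L_A$ and $L_B$ are proportional (or one of them vanishes), with no bound at all on their number of terms. In that case the bilinear equation carries no information, and monomiality must instead be extracted from the first-coordinate equation. The way the paper (for the Taniguchi analogue) and \cite{kasperszhou2020} do this is to show that, after inverting a suitable linearized permutation, the first-coordinate equation collapses to an EA-equivalence between the Gold functions $x\mapsto x^{2^k+1}$ and $x\mapsto x^{2^\ell+1}$ on $\Fpm$; one then invokes \cite[Theorem~2.1]{budaghyancarletleander2008} to conclude $k=\ell$, and the separate result \cite[Theorem~4.1]{kasperszhou2020} that equivalence mappings between Gold APN functions are linearized monomials (valid only for $m\ge 5$, which is why small $m$ is checked by computer). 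This reduction to the Gold case is the key lemma of the whole argument and is absent from your outline; without it the ``tiny support'' claim, and hence the remainder of your plan, does not go through. The subsequent exponent comparison forcing $k=\ell$ and $s=t$ in the stated ranges is then as you describe.
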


\section{On the equivalence of Taniguchi APN functions}
\label{sec:Taniguchi_equivalence}
	In this section, we study the equivalence problem of the Taniguchi APN functions on~$\Fptwom$, which were introduced in \autoref{th:ZhouPottAPN}. We will answer the question for which values of the parameters $k,\alpha,\beta$ two Taniguchi APN functions $f_{k,\alpha,\beta}$ are CCZ-inequivalent.\par
	
	As we have pointed out before, Taniguchi APN functions are quadratic. Hence, by \autoref{prop:yoshiara}, two Taniguchi APN functions are CCZ-equivalent if and only if they are EA-equivalent, and their automorphism groups under CCZ- and EA-equivalence are the same. We begin by studying the case $\alpha = 0$. Recall from \autoref{lem:alpha=0} that $f_{k,0,\beta}$ is APN if and only if $m$ is even and $\beta$ is a non-cube.
	
	\begin{proposition}
		\label{prop:alpha=0_ZhouPott}
		Let $m \ge 2$ be an even integer, and let $0 < k < \frac{m}{2}$ such that $k$ and $m$ are coprime. Let $\beta, \gamma \in \Fpm^*$ be non-cubes. The Taniguchi APN function $f_{k,0,\beta}$ on $\Fptwom$ from \autoref{th:TaniguchiAPN} is linearly equivalent to the Pott-Zhou APN function $g_{k,2k,\gamma}$ on $\Fptwom$ from \autoref{th:ZhouPottAPN}.
	\end{proposition}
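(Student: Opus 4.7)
The plan is to write down an explicit linear equivalence between $f_{k,0,\beta}$ and $g_{k,2k,\beta^{-1}}$ and then use \autoref{lem:ZP_trivial_equivalence}\ref{item:non-cubics} to move from the specific non-cube $\beta^{-1}$ to an arbitrary non-cube $\gamma$. The motivation is the near-symmetry between
\[
	f_{k,0,\beta}(x,y) = \bigl(x^{2^{2k}(2^k+1)} + \beta y^{2^k+1},\, xy\bigr)
	\quad\text{and}\quad
	g_{k,2k,\gamma}(x,y) = \bigl(x^{2^k+1} + \gamma y^{2^{2k}(2^k+1)},\, xy\bigr):
\]
the second coordinates coincide, and the two first coordinates differ only by interchanging the roles of $x$ and $y$ together with a rescaling that swaps the leading coefficient $1$ and the coefficient of the ``other'' term.

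Concretely, I would set
\[
	L(x,y) = (y,x), \qquad N(u,v) = (\beta u,\, v),
\]
both of which are linear bijections on $\Fpm^2$ since $\beta \neq 0$. Note that $\beta^{-1}$ is again a non-cube, since the cubes form a multiplicative subgroup of $\Fpm^*$, so $g_{k,2k,\beta^{-1}}$ is APN by \autoref{th:ZhouPottAPN}. A direct computation then gives
\[
	f_{k,0,\beta}(L(x,y)) = \bigl(y^{2^{2k}(2^k+1)} + \beta x^{2^k+1},\, xy\bigr) = N\bigl(g_{k,2k,\beta^{-1}}(x,y)\bigr),
\]
so $(L,N)$ is a linear equivalence (with trivial additive part $M = 0$) from $g_{k,2k,\beta^{-1}}$ to $f_{k,0,\beta}$.

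To finish, I would invoke \autoref{lem:ZP_trivial_equivalence}\ref{item:non-cubics}, which asserts that $g_{k,2k,\beta^{-1}}$ and $g_{k,2k,\gamma}$ are linearly equivalent for any two non-cubes in $\Fpm^*$. Composing this linear equivalence with the one constructed above yields the claim. I do not foresee any real obstacle here: the argument is essentially the observation that setting $\alpha = 0$ in Taniguchi's form produces a Pott-Zhou function up to a coordinate swap and a diagonal rescaling, and the only things to verify are the straightforward one-line polynomial identity above and that $\beta^{-1}$ is genuinely a non-cube so that \autoref{lem:ZP_trivial_equivalence}\ref{item:non-cubics} applies.
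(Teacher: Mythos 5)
Your proposal is correct and matches the paper's own proof essentially verbatim: the paper uses exactly the same maps $L(x,y)=(y,x)$ and $N(u,v)=(\beta u, v)$ to exhibit the linear equivalence between $f_{k,0,\beta}$ and $g_{k,2k,1/\beta}$, and then invokes \autoref{lem:ZP_trivial_equivalence}~(a) to pass to an arbitrary non-cube $\gamma$. The verification that $\beta^{-1}$ is again a non-cube is also the same observation the paper makes.
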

	\begin{proof}
		If $\beta$ is a non-cube in $\Fptwom^*$, then $\frac{1}{\beta}$ is as well. From \autoref{lem:ZP_trivial_equivalence}~(a), we know that the Pott-Zhou APN function $g_{k,2k,\gamma}$ is linearly equivalent to $g_{k,2k,\frac{1}{\beta}}$. We will show that $f_{k,0,\beta}$ is linearly equivalent to $g_{k,2k,\frac{1}{\beta}}$.\par
		
		By \cref{eq:LinEquiv_1} and \cref{eq:LinEquiv_2} and the explanations below, the two functions $f_{k,0,\beta}$ and $g_{k,2k,\frac{1}{\beta}}$ are linearly equivalent if there exist bijective mappings $L,N$ on $\F_{2^m}^2$, represented by linearized polynomials $L_A(X,Y), L_B(X,Y) \in \Fpm[X,Y]$ and $N_1(X),\dots,N_4(X) \in \Fpm[X]$, respectively, such that the two equations
		\begin{align*}
			L_A(x,y)^{2^{2k}(2^k+1)} + \beta L_B(x,y)^{(2^k+1)} &= N_1(x^{2^k+1} + \tfrac{1}{\beta} y^{2^{2k}(2^k+1)}) + N_3(xy),\\
			L_A(x,y)L_B(x,y) &= N_2(x^{2^k+1}+ \tfrac{1}{\beta} y^{2^{2k}(2^k+1)}) + N_4(xy)
		\end{align*}
		hold for all $x,y \in \Fpm$. The functions $f_{k,0,\beta}$ and $g_{k,2k,\frac{1}{\beta}}$ are linearly equivalent by
		\begin{align*}
			L_A(X,Y) &= Y,& L_B(X,Y)&=X,& N_1(X) &= \beta X,&  N_2(X)=N_3(X)&= 0, &N_4(X) &= X.
		\end{align*}
		Consequently, $f_{k,0,\beta}$ is linearly equivalent to $g_{k,2k,\gamma}$.
	\end{proof}
	
	From \autoref{prop:alpha=0_ZhouPott}, we immediately obtain the following results.
	
	\begin{corollary}
	\label{cor:alpha=0}
		Let $m \ge 4$.	
		\begin{enumerate}[label=(\alph*)]
			\item\label{item:alpha=0_a} Two Taniguchi APN functions $f_{k,0,\beta}$ and $f_{-k,0,\beta}$ on $\Fptwom$ are CCZ-equivalent.
			\item\label{item:alpha=0_b} Two Taniguchi APN functions $f_{k,0,\beta}$ and $f_{\ell,0,\beta'}$ on $\Fptwom$ where $0< k,\ell < \frac{m}{2}$ are CCZ-equivalent if and only if $k = \ell$.
		\end{enumerate}
	\end{corollary}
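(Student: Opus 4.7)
The plan is to translate statements about Taniguchi functions with $\alpha = 0$ into statements about Pott--Zhou functions via \autoref{prop:alpha=0_ZhouPott}, and then invoke the already cited equivalence results \autoref{lem:ZP_trivial_equivalence} and \autoref{th:ZP_equivalence} for the Pott--Zhou family.

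For part \ref{item:alpha=0_a}, without loss of generality I assume $0 < k < m/2$, so that \autoref{prop:alpha=0_ZhouPott} yields $f_{k,0,\beta} \sim_L g_{k,2k,1/\beta}$. I would then repeat the calculation in the proof of \autoref{prop:alpha=0_ZhouPott}, with the same EL-mapping ($L_A(X,Y) = Y$, $L_B(X,Y) = X$, $N_1(X) = \beta X$, $N_4(X) = X$, $N_2 = N_3 = 0$), to verify the parallel statement $f_{m-k,0,\beta} \sim_L g_{m-k, 2(m-k), 1/\beta}$; this only uses $\gcd(m-k,m) = 1$, not any canonical-range restriction. Next I apply \autoref{lem:ZP_trivial_equivalence}~\ref{item:k=-l,s=-t} to the pairs $(k, 2k)$ and $(m-k, 2(m-k))$: both $s$-parameters are even because $m$ is even, and $m-k \equiv -k \pmod{m}$ together with $2(m-k) \equiv -2k \pmod{m}$. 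This gives $g_{k,2k,1/\beta} \sim_L g_{m-k,2(m-k),1/\beta}$, and chaining the three equivalences proves $f_{k,0,\beta} \sim_L f_{m-k,0,\beta}$.

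For part \ref{item:alpha=0_b}, \autoref{prop:alpha=0_ZhouPott} reduces the question to the CCZ-equivalence of $g_{k, 2k, 1/\beta}$ and $g_{\ell, 2\ell, 1/\beta'}$ with $0 < k, \ell < m/2$. Here \autoref{th:ZP_equivalence} applies verbatim only when $2k, 2\ell \le m/2$; if instead $m/4 < k < m/2$, I first use \autoref{lem:ZP_trivial_equivalence}~\ref{item:k=-l,s=-t} (with the first parameter fixed to $k$ and $t \equiv -2k \pmod m$) to replace $g_{k,2k,1/\beta}$ by the linearly equivalent $g_{k, m-2k, 1/\beta}$, whose $s$-parameter $m-2k$ now lies in $[0, m/2)$, and I proceed analogously for $\ell$. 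Once both functions are in the canonical range, \autoref{th:ZP_equivalence} asserts CCZ-equivalence exactly when the first parameters coincide, that is, exactly when $k = \ell$. The converse ``if'' direction (independence of the choice of non-cube $\beta$) follows from \autoref{lem:ZP_trivial_equivalence}~\ref{item:non-cubics} combined with two applications of \autoref{prop:alpha=0_ZhouPott}.

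The main obstacle is the bookkeeping of exponent ranges: \autoref{prop:alpha=0_ZhouPott} and \autoref{th:ZP_equivalence} restrict their parameters to $[0, m/2]$, whereas $2k$ and $2(m-k)$ typically fall outside that window. One must exploit that the $s$-parameter of a Pott--Zhou function is only defined modulo $m$ (because $2^s$ is computed in $\F_{2^m}^*$, on which Frobenius has order $m$) and then invoke the $\pm$-symmetry supplied by \autoref{lem:ZP_trivial_equivalence}~\ref{item:k=-l,s=-t} to pull the parameters back into the canonical window. Once this reduction is made explicit, the remaining arguments are straightforward chains of the cited results.
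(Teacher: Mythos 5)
Your proposal is correct and follows essentially the same route as the paper, whose entire proof is the two citations you use: part \ref{item:alpha=0_a} from \autoref{prop:alpha=0_ZhouPott} together with \autoref{lem:ZP_trivial_equivalence}~\ref{item:k=-l,s=-t}, and part \ref{item:alpha=0_b} from \autoref{prop:alpha=0_ZhouPott} together with \autoref{th:ZP_equivalence}. The only difference is that you make explicit the parameter-range bookkeeping (reducing $2k$ and $2(m-k)$ modulo $m$ into the canonical windows) that the paper leaves implicit, and your handling of it is sound.
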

	\begin{proof}
		Statement \ref{item:alpha=0_a} follows from \autoref{prop:alpha=0_ZhouPott} in combination with \autoref{lem:ZP_trivial_equivalence}~(b). Statement \ref{item:alpha=0_b} follows from \autoref{prop:alpha=0_ZhouPott} in combination with \autoref{th:ZP_equivalence}.
	\end{proof}
	
	We remark that for $m = 2$, all Taniguchi APN functions, no matter if $\alpha$ is zero or not, are CCZ-equivalent to the Gold APN function $x \mapsto x^3$. From now on, we focus on the case $\alpha \ne 0$. In the following \autoref{lem:polynomial_transformation}, we summarize several results about polynomials of the shape $X^{2^k+1} + X + \beta$ that we need to solve the equivalence problem of the Taniguchi APN functions.
	
	\begin{lemma}
	\label{lem:polynomial_transformation}
		Let $m \ge 2$, and let $\alpha, \beta \in \Fpm^*$. The statement the polynomial $X^{2^k+1} + \alpha X + \beta \in \Fpm[X]$ has no roots is equivalent to the following statements:
		\begin{enumerate}[label=(\alph*)]
			\item $X^{2^k+1} + X + \frac{\beta}{\alpha^{2^{-k}+1}} \in \Fpm[X]$ has no roots,
			\item $X^{2^k+1} + X + \beta^{2^i} \in \Fpm[X]$, where $i \in \{0, \dots, m-1\}$, has no roots,
			\item $X^{2^{-k}+1} + X + \beta \in \Fpm[X]$ has no roots.
		\end{enumerate}
	\end{lemma}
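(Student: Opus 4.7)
The plan is to prove each of (a), (b), (c) by exhibiting an explicit bijection of $\Fpm$ that identifies the root set of one polynomial with that of another, so that the property of having no roots is preserved. For (a), a scaling does the job: the substitution $X = \alpha^{2^{-k}} Y$, which is a bijection since $\alpha \neq 0$, transforms the original polynomial via
\[
  (\alpha^{2^{-k}} Y)^{2^k+1} + \alpha\,\alpha^{2^{-k}} Y + \beta \;=\; \alpha^{2^{-k}+1}\bigl( Y^{2^k+1} + Y + \beta/\alpha^{2^{-k}+1}\bigr).
\]
So the polynomial $X^{2^k+1}+\alpha X+\beta$ and the normalized one from (a) have the same number of roots in $\Fpm$. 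This reduces (b) and (c) to statements about the normalized form $X^{2^k+1}+X+\beta$.

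For (b), the plan is to use the Frobenius automorphism $\gamma \mapsto \gamma^{2^i}$, which permutes $\Fpm$. Raising $\gamma^{2^k+1} + \gamma + \beta = 0$ to the $2^i$-th power yields $(\gamma^{2^i})^{2^k+1} + \gamma^{2^i} + \beta^{2^i} = 0$, so $\gamma$ is a root of $X^{2^k+1}+X+\beta$ if and only if $\gamma^{2^i}$ is a root of $X^{2^k+1}+X+\beta^{2^i}$. Composing with (a) gives the claimed equivalence.

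Part (c) is the main obstacle. The strategy is to raise the equation to the $2^{-k}$-th power to produce the desired exponent $2^{-k}+1$, and then use an additive translation to cancel the extra term that appears. Starting from $\gamma^{2^k+1}+\gamma+\beta = 0$, raising to the $2^{-k}$-th power yields
\[
  \gamma^{2^{-k}+1} + \gamma^{2^{-k}} + \beta^{2^{-k}} = 0.
\]
The unwanted summand $\gamma^{2^{-k}}$ will be removed by substituting $\gamma = \delta + 1$. In characteristic $2$ one has $(\delta+1)^{2^{-k}} = \delta^{2^{-k}} + 1$ and $(\delta+1)^{2^{-k}+1} = \delta^{2^{-k}+1} + \delta + \delta^{2^{-k}} + 1$; the two $\delta^{2^{-k}}$ terms then cancel, as do the two constants, leaving $\delta^{2^{-k}+1} + \delta + \beta^{2^{-k}} = 0$. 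Thus $\delta = \gamma+1$ is a root of $X^{2^{-k}+1}+X+\beta^{2^{-k}}$, and a final Frobenius step (i.e.\ part (b) applied with $-k$ in place of $k$) replaces $\beta^{2^{-k}}$ by $\beta$. The converse direction follows by running the same argument with $k$ and $-k$ interchanged, since every operation used is a bijection of $\Fpm$.

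The only genuinely non-routine point is the observation in (c) that the shift $X \mapsto X+1$ in characteristic $2$ exactly kills the spurious $X^{2^{-k}}$ term produced by raising the equation to a Frobenius power. Once this combinatorial coincidence is identified, the remaining verifications are direct substitutions.
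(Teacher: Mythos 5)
Your proof is correct and follows essentially the same route as the paper: the scaling substitution for (a), the Frobenius action on coefficients for (b), and for (c) a shift-plus-Frobenius change of variable. The only cosmetic difference is that in (c) you decompose the paper's single substitution $X \mapsto (X+1)^{2^k}$ into separate Frobenius and translation steps, which amounts to the same bijection of $\Fpm$.
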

	\begin{proof}
		Let $P(X) = X^{2^k+1} + \alpha X + \beta$ such that $P(X)$ has no root in $\Fpm$.
		\begin{enumerate}[label=(\alph*)]
			\item If we substitute $X$ by $\alpha^{2^{-k}}X$ in $P(X)$, we obtain $\alpha^{2^{-k}+1}X^{2^k+1} + \alpha^{2^{-k}+1}X + \beta$.	Factoring out $\alpha^{2^{-k}+1}$ gives the result.
			\item Transform the polynomial $P(X)$ into $X^{2^k+1} + X + \beta^{2^i}$ by applying the automorphism $x \mapsto x^{2^i}$ on the coefficients of $P(X)$.
			\item Let $P'(X) = X^{2^{-k}+1} + X + \beta$. Then $P'(X)$ can be transformed into $P(X)$ by the substitution $X \mapsto (X+1)^{2^k}$.\qedhere
		\end{enumerate}	
	\end{proof}
	
	We now focus on the equivalence relations between Taniguchi APN functions.
	
	\begin{proposition}
	\label{prop:trivial_equivalences}
		Let $m \ge 2$ be an integer. Let $k$ be an integer coprime to $m$ such that $0 < k <m$, and let $\alpha, \beta \in \Fpm^*$. Then, the following pairs of Taniguchi APN functions on $\Fptwom$ from \autoref{th:TaniguchiAPN} are linearly equivalent:
		\begin{enumerate}[label=(\alph*)]
			\item\label{item:alpha_1} $f_{k,\alpha,\beta}$ and $f_{k,1,\frac{\beta}{\alpha^{2^{-k}+1}}}$,
			\item\label{item:beta_frobenius} $f_{k,1,\beta^{2^i}}$ and $f_{k,1,\beta}$ for $i \in \{0,\dots,m-1\}$,
			\item\label{item:-k_k} $f_{-k,1,\beta}$ and $f_{k,1,\beta}$.
		\end{enumerate}
	\end{proposition}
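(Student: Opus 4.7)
My plan is to verify each of the three equivalences by exhibiting explicit linear bijections $L,N \colon \Fpm^2 \to \Fpm^2$ such that, in the notation of \cref{eq:LinEquiv_1} and \cref{eq:LinEquiv_2}, the corresponding two functions $f,g$ satisfy $f(L(x,y)) = N(g(x,y))$ with $M = 0$. In each case the verification will reduce to a coefficient comparison between the three quadratic monomials making up the first output coordinate, together with the monomial $xy$ in the second coordinate.

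For part (a), I would use the diagonal substitution $L(x,y) = (x,\,\alpha^{-2^{-k}} y)$ paired with $N(u,v) = (u,\,\alpha^{-2^{-k}} v)$. Upon substituting into $f_{k,\alpha,\beta}(L(x,y))$, the middle monomial $\alpha x^{2^{2k}} y^{2^k}$ will pick up the factor $(\alpha^{-2^{-k}})^{2^k} = \alpha^{-1}$, normalising its coefficient to $1$, and $\beta y^{2^k+1}$ will pick up $(\alpha^{-2^{-k}})^{2^k+1} = \alpha^{-(1+2^{-k})}$, yielding coefficient $\beta/\alpha^{2^{-k}+1}$. The second coordinate scales by $\alpha^{-2^{-k}}$, which is absorbed into $N$.

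For part (b), I would apply the $2^{-i}$-power Frobenius uniformly: take $L(x,y) = (x^{2^{-i}},\,y^{2^{-i}})$ and $N(u,v) = (u^{2^{-i}},\,v^{2^{-i}})$. Both maps are $\F_2$-linear bijections of $\Fpm^2$, and a direct expansion will show $f_{k,1,\beta}(L(x,y)) = N(f_{k,1,\beta^{2^i}}(x,y))$: the $2^i$-th power in the coefficient $\beta^{2^i}$ exactly cancels against the $2^{-i}$-Frobenius applied by $N$, while the other monomials are unaffected since their coefficients lie in $\F_2$.

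Part (c) is the main technical challenge. The plan is to combine a swap $x \leftrightarrow y$ with a Frobenius of degree $2k$ and scalings $b,c \in \Fpm^*$ by taking
\[
    L(x,y) = (b\,y^{2^{2k}},\ c\,x^{2^{2k}}), \qquad N(u,v) = (u^{2^{-k}},\ bc\,v^{2^{2k}}).
\]
Expanding $f_{-k,1,\beta}(L(x,y))$ and comparing with $N(f_{k,1,\beta}(x,y))$, all monomial exponents will match automatically (this uses the identity $3k + (-k) = 2k$ relating the Frobenius powers in $L$ and $N$), and the three coefficient equations for the first coordinate will reduce consistently to the single equation $b^{2^k+1} = \beta^{2^{2k}}$, with $c = b^{-2^{-k}}$ then forced. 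The hard part will be verifying that this equation admits a solution $b \in \Fpm^*$ for every admissible $\beta$: when $m$ is odd, $\gcd(2^k+1,\,2^m-1) = 1$ and solvability is automatic; when $m$ is even, one has $\gcd(2^k+1,\,2^m-1) = 3$, so solvability requires $\beta$ to be a cube in $\Fpm^*$, a restriction that will have to be deduced from the Taniguchi APN hypothesis, namely the non-existence of roots of $X^{2^k+1}+X+\beta$ in $\Fpm$ (with \autoref{lem:number_of_beta} providing the matching count of admissible $\beta$).
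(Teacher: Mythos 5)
Parts (a) and (b) of your proposal are correct and essentially identical to the paper's proof; the paper applies the forward Frobenius $x\mapsto x^{2^i}$ to inputs and outputs where you apply $x\mapsto x^{2^{-i}}$, which is the same equivalence read in the opposite direction.

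Part (c) contains a genuine gap. Because your ansatz fixes the first output map to be $u\mapsto u^{2^{-k}}$ with coefficient $1$, the three coefficient conditions collapse, as you say, to the single equation $b^{2^k+1}=\beta^{2^{2k}}$. This is solvable for $b\in\Fpm^*$ if and only if $\beta$ is a $\gcd(2^k+1,2^m-1)$-th power, hence for even $m$ only when $\beta$ is a cube. The rescue you hope for --- that the no-root hypothesis on $X^{2^k+1}+X+\beta$ forces $\beta$ to be a cube --- is false. Take $m=4$, $k=1$, and let $g$ be a root of $X^4+X+1$ in $\F_{2^4}$. A direct computation shows the image of $x\mapsto x^3+x$ on $\F_{2^4}$ is $\F_{2^4}\setminus\{1,g^7,g^{11},g^{13},g^{14}\}$, so $\beta=g^7$ is admissible (indeed $\{g^7,g^{11},g^{13},g^{14}\}$ is exactly the length-$4$ Frobenius orbit the paper mentions for $m=4$ in the proof of \autoref{th:Taniguchi_equivalence}); but $7\not\equiv 0\pmod 3$, so $g^7$ is not a cube and $b^3=\beta^{16}=\beta$ has no solution in $\F_{2^4}^*$. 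The count from \autoref{lem:number_of_beta} matching the number of nonzero cubes is a coincidence of cardinalities, not an identity of sets.

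The repair is cheap: allow a scalar in the first output coordinate, i.e.\ $N_1(u)=d\,u^{2^{-k}}$ with $d\in\Fpm^*$. The three coefficient equations become $b^{2^{-2k}(2^{-k}+1)}=d\,\beta^{2^{-k}}$, $b^{2^{-2k}}c^{2^{-k}}=d$ and $\beta c^{2^{-k}+1}=d$; eliminating $c$ and $d$ from the last two makes the first an identity, so every $b\in\Fpm^*$ yields a solution and no condition on $\beta$ is needed. This extra scalar is precisely what the paper exploits: it maps $f_{-k,1,\beta}$ to $f_{k,\frac{1}{\beta},\frac{1}{\beta}}$ via $(x,y)\mapsto(y^{2^{3k}},x^{2^{3k}})$ with $N_1(X)=\beta X$, and then reaches $f_{k,1,\beta}$ by chaining part (a) (note $\frac{1/\beta}{(1/\beta)^{2^{-k}+1}}=\beta^{2^{-k}}$) with part (b). Either route works once the unit coefficient in $N_1$ is relaxed; as written, your part (c) fails for infinitely many admissible $\beta$ when $m$ is even.
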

	\begin{proof}
		It follows from \autoref{lem:polynomial_transformation} that all the functions in \autoref{prop:trivial_equivalences} are APN.\par
		
		By \cref{eq:LinEquiv_1} and \cref{eq:LinEquiv_2} and the explanations below, two Taniguchi APN functions $f_{k,\alpha,\beta}$ and $f_{\ell,\alpha',\beta'}$ are linearly equivalent if there exist invertible mappings $L,N$ on $\F_{2^m}^2$, represented by linearized polynomials $L_A(X,Y), L_B(X,Y) \in \Fpm[X,Y]$ and $N_1(X),\dots,N_4(X) \in \Fpm[X]$, respectively, such that the two equations
		\begin{align*}
			L_A(x,y)^{2^{2k}(2^k+1)} + \alpha &L_A(x,y)^{2^{2k}} L_B(x,y)^{2^k} +\beta L_B(x,y)^{(2^k+1)} \\ &= N_1(x^{(2^\ell+1)2^{2\ell}} + \alpha' x^{2^{2\ell}} y^{2^\ell} + \beta' y^{2^\ell+1}) + N_3(xy),\\
			L_A(x,y)L_B(x,y) &= N_2(x^{(2^\ell+1)2^{2\ell}} + \alpha' x^{2^{2\ell}} y^{2^\ell} + \beta' y^{2^\ell+1}) + N_4(xy)
		\end{align*}
		hold for all $x,y \in \Fpm$. We will give such polynomials for (a)--(c). As we have $N_2(X) = N_3(X) = 0$ in all three cases, we will not restate these polynomials in every case.
		\begin{enumerate}[label=(\alph*)]
			\item The functions $f_{k,\alpha,\beta}$ and $f_{k,1,\frac{\beta}{\alpha^{2^{-k}+1}}}$ are linearly equivalent by
			\begin{align*}
				L_A(X,Y) &= X,& L_B(X,Y)&=\tfrac{1}{\alpha^{2^{-k}}}Y,& N_1(X) &=X, &N_4(X) &= \tfrac{1}{\alpha^{2^{-k}}}X.
			\end{align*}
			\item The functions $f_{k,1,\beta^{2i}}$ and $f_{k,1,\beta}$ are linearly equivalent by
			\begin{align*}
				L_A(X,Y) &= X^{2^i},& L_B(X,Y)&=Y^{2^i},& N_1(X) &= X^{2^i},&  N_4(X) &= X^{2^i}.
			\end{align*}
			\item We first show that $f_{-k,1,\beta}$ and $f_{k,\frac{1}{\beta},\frac{1}{\beta}}$ are equivalent. This can be seen choosing
			\begin{align*}
				L_A(X,Y) &= Y^{2^{3k}},& L_B(X,Y)&=X^{2^{3k}},& N_1(X) &= \beta X,&  N_4(X) &= X^{2^{3k}}.
			\end{align*}
			Using \ref{item:alpha_1}, it follows that $f_{k,\frac{1}{\beta},\frac{1}{\beta}}$ is linearly equivalent to $f_{k,1,\beta^{2^{-k}}}$, which, by \ref{item:beta_frobenius}, is linearly equivalent to $f_{k,1,\beta}$.\qedhere
		\end{enumerate}		
	\end{proof}
	
	Next, we present our main theorem. We remark that it only holds for $m \ge 3$ as for $m = 2$, all Taniguchi APN functions are CCZ-equivalent to the Gold APN function~$x \mapsto x^3$. According to \autoref{prop:trivial_equivalences}, for $m \ge 3$, every Taniguchi APN function $f_{k,\alpha,\beta}$, where $\alpha \ne 0$, is linearly equivalent to a Taniguchi APN function $f_{\ell,1,\beta'}$, where $0 < \ell < \frac{m}{2}$. Hence, we will only consider functions $f_{k,1,\beta}$ where $0 < k < \frac{m}{2}$ in our theorem. Note that the structure of the proof of \autoref{th:Taniguchi_equivalence} is similar to the structure of the proof of \autoref{th:ZP_equivalence} by the present authors~\cite{kasperszhou2020}. To keep the paper self-contained we will restate some parts that also appear in \cite{kasperszhou2020}. 
	
	\begin{theorem}[Main Theorem]
		\label{th:Taniguchi_equivalence}
		Let $m \ge 3$ be an integer, and let $k,\ell$ be integers, $0 < k,\ell < \frac{m}{2}$, coprime to $m$. Let $\beta, \beta' \in \Fpm^*$ such that the polynomials $X^{2^k+1} + X + \beta$ and $X^{2^\ell+1} + X + \beta'$ have no roots in $\Fpm$. Two Taniguchi APN functions $f_{k,1,\beta}, f_{\ell,1,\beta'}$ on $\Fptwom$, where
		\[
			f_{k,1,\beta} = (x^{2^{2k}(2^k+1)} + x^{2^{2k}}y^{2^k} + \beta y^{2^k+1},\ xy)
		\]
		and
		\[
			f_{\ell,1,\beta'} = (x^{2^{2\ell}(2^\ell+1)} + x^{2^{2\ell}}y^{2^\ell} + \beta' y^{2^\ell+1},\ xy),
		\]
		are CCZ-equivalent if and only if $k = \ell$ and $\beta' = \beta^{2^i}$ for some $i \in \{0,\dots,m-1\}$.
	\end{theorem}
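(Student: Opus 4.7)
The ``if'' direction follows immediately from \autoref{prop:trivial_equivalences}\ref{item:beta_frobenius}, so the work lies in the converse. My plan is to exploit the reduction chain established in \autoref{sec:Preliminaries}: by \autoref{prop:yoshiara}, CCZ-equivalence of the quadratic APN functions $f_{k,1,\beta}$ and $f_{\ell,1,\beta'}$ is the same as EA-equivalence, and since both functions vanish at the origin, \autoref{prop:EA-EL} further reduces the problem to EL-equivalence. I may therefore assume linearized polynomials $L_A,L_B,M_A,M_B\in\Fpm[X,Y]$ and $N_1,N_2,N_3,N_4\in\Fpm[X]$ such that $L=(L_A,L_B)$ and $N=(N_1+N_3,\,N_2+N_4)$ are bijections on $\Fpm^2$ and such that the two identities \cref{eq:LinEquiv_1}, \cref{eq:LinEquiv_2} hold with $f=f_{k,1,\beta}$ and $g=f_{\ell,1,\beta'}$.

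The core of the argument is a careful coefficient comparison. Writing $L_A(x,y)=A_1(x)+A_2(y)$ and $L_B(x,y)=B_1(x)+B_2(y)$ with $A_i,B_i$ linearized in a single variable, I first attack the simpler relation $L_A L_B = N_2(g_1')+N_4(xy)+M_B$, where $g_1'=x^{2^{2\ell}(2^\ell+1)}+x^{2^{2\ell}}y^{2^\ell}+\beta' y^{2^\ell+1}$. The pure-$x$ part of the left side is $A_1(x)B_1(x)$, a polynomial whose monomials have exponents $2^i+2^j$ with arbitrary gap $j-i$. On the right the only pure-$x$ contribution comes from $N_2(x^{2^{2\ell}(2^\ell+1)})$, whose monomials all have exponents of the rigid shape $2^{a}+2^{a+\ell}$. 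Since $\gcd(\ell,m)=1$, matching these sets forces $A_1$ or $B_1$ to reduce to a single monomial, and the symmetric analysis of the pure-$y$ part $A_2(y)B_2(y)$ pins down $A_2,B_2$ similarly. After accounting for the possibility of swapping the roles of $L_A$ and $L_B$ (as both components enter symmetrically into $f_2$), this leaves only a handful of possible shapes for $L$.

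With $L$ in one of these rigid shapes, I would substitute into the first equation $f_1(L_A,L_B) = N_1(g_1')+N_3(xy)+M_A$, expand $(A_1(x)+A_2(y))^{2^{2k}(2^k+1)}$, $(A_1(x)+A_2(y))^{2^{2k}}(B_1(x)+B_2(y))^{2^k}$, and $\beta(B_1(x)+B_2(y))^{2^k+1}$, and compare coefficients of each monomial $x^a y^b$ with the corresponding coefficient on the right. Monomials whose exponent pattern matches neither $N_1(g_1')$ nor $N_3(xy)$ nor $M_A$ must vanish; this forces the exponent shifts induced by $2^k$ on the $f$-side to coincide with those induced by $2^\ell$ on the $g$-side, which---together with $0<k,\ell<m/2$ and the coprimality of $k,\ell$ with $m$---yields $k=\ell$. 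Once $k=\ell$ is established, the surviving coefficients involving $\beta$ and $\beta'$ collapse to an identity of the form $\beta'=\beta^{2^i}c$ for some constant $c$, and the bijectivity of $L$ and $N$ forces $c=1$.

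The main obstacle I expect is the bookkeeping in the third step: after imposing the rigid structure on $L$, the cross terms produced by the two trinomials $f_1$ and $g_1'$ proliferate, and ruling out spurious configurations requires a patient case analysis on which of $A_1,A_2,B_1,B_2$ are the surviving monomials and on whether $L$ preserves or swaps the two $\Fpm$-coordinates. The structure of this case analysis mirrors the Pott-Zhou proof of \autoref{th:ZP_equivalence} in \cite{kasperszhou2020}, and I plan to reuse that template, with the additional $\alpha x^{2^{2k}}y^{2^k}$ cross term of the Taniguchi family handled by a separate sub-argument that exploits the coprimality of $k$ with $m$.
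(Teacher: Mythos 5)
Your overall strategy---reduce CCZ- to EL-equivalence via \autoref{prop:yoshiara} and \autoref{prop:EA-EL}, split $L_A,L_B$ into single-variable linearized polynomials, and compare coefficients in the two bivariate identities---is the same as the paper's. However, there is a genuine gap at the step your argument leans on most heavily: the claim that the bilinear equation $L_A(x,y)L_B(x,y)=N_2(g_1')+N_4(xy)+M_B(x,y)$ already forces the pure-$x$ parts $A_1,B_1$ down to monomials. Restricting to $y=0$, the left-hand side is $A_1(x)B_1(x)=\sum_i a_ib_i x^{2^{i+1}}+\sum_{i<j}(a_ib_j+a_jb_i)x^{2^i+2^j}$; the diagonal part is linearized and can be absorbed into $M_B$, so the only constraint is that the cross part match $\sum_i d_i x^{2^i+2^{i+\ell}}$. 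This does \emph{not} pin $A_1$ or $B_1$ down to a single term. In particular, if $B_1=\delta A_1$ for a scalar $\delta$, then $A_1(x)B_1(x)=\delta A_1(x)^2$ is entirely linearized and the equation yields no information at all, however many terms $A_1$ has; and even when $B_1$ is not proportional to $A_1$, the resulting coefficient conditions \cref{eq:Coefficients1} and \cref{eq:Coefficients2} still permit both polynomials to be supported on all of $\{u,u\pm\ell,u\pm2\ell\}$. Closing exactly this gap is the bulk of the paper's proof: in the proportional case one must pass to the first equation, invert a linearized permutation polynomial built from $X^{2^k+1}+X+\beta$ to reduce to the equivalence problem for Gold functions, and invoke the authors' earlier result \cite[Theorem~4.1]{kasperszhou2020} that for $m\ge5$ the equivalence mappings between Gold APN functions are linearized monomials; the remaining configurations are then eliminated by a long case analysis of uncancellable terms in the first equation. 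Your proposal has no substitute for this input, so the ``rigid shapes for $L$'' you start your third step from are not actually established.

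Two further points. First, since the Gold-monomial lemma requires $m\ge5$, the cases $m=3$ and $m=4$ need a separate argument (the paper settles them by directly counting the orbits of admissible $\beta$), which your proposal omits. Second, your closing step is slightly off: once $L_A=a_uX^{2^u}$, $L_B=\overline{b}_uY^{2^u}$ and $N_1=c_uX^{2^u}$ are forced, the three coefficient identities give $\overline{b}_u=a_u^{2^{2k}}$, $c_u=\overline{b}_u^{2^k+1}$ and then $\beta=\beta'^{2^u}$ outright; there is no residual constant $c$ that needs to be removed by appealing to bijectivity.
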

	\begin{proof}
		We have shown in \autoref{prop:trivial_equivalences} that $f_{k,1,\beta}$ and $f_{k,1,\beta^{2^i}}$ are linearly equivalent and thereby CCZ-equivalent. We will now show the converse: if $f_{k,1,\beta}$ and $f_{\ell,1,\beta'}$ are CCZ-equivalent, then $k=\ell$ and $\beta' = \beta^{2^i}$ for some $i \in \{0,\dots,m-1\}$. \par
		
		For $m = 3$ and $m = 4$, the result can be easily confirmed. If $m=3$, then $k=1$ and, according to \autoref{lem:number_of_beta}, there are three distinct $\beta \in \F_{2^3}^*$ such that $X^3+X+\beta$ has no root in $\F_{2^3}$. Clearly, if $\beta$ meets this condition, then $\beta^2$ and $\beta^4$ do as well. Consequently, for $m = 3$, all three Taniguchi APN functions belong to the same equivalence class. If $m=4$, then $k=1$ and there are five distinct $\beta \in \F_{2^4}^*$ such that $X^3+X+\beta$ has no root, namely $1$ and $\beta, \beta^2, \beta^4, \beta^8$ for some $\beta \ne 1$. Hence, for $m=4$, there exist two equivalence classes: $f_{1,1,1}$ of size $1$ and $f_{1,1,\beta}$, where $\beta \ne 1$, of size $4$. The existence of these two classes was also observed by \textcite{taniguchi2019} who computed the $\Gamma$-ranks for these functions. \par
		
		For the remainder of the proof, let $m\ge 5$. Assume $f_{k,1,\beta}$ and $f_{\ell,1,\beta'}$ are CCZ-equivalent. By \autoref{prop:yoshiara} and \autoref{prop:EA-EL}, this implies that the functions are also EL-equivalent. Hence, analogously to the proof of \autoref{prop:trivial_equivalences}, there exist linearized polynomials $L_A(X,Y), L_B(X,Y), M_A(X,Y), M_B(X,Y) \in \Fpm[X,Y]$ and $N_1(X), \dots, N_4(X) \in \Fpm[X]$, where
		\[
			L(X,Y) = (L_A(X,Y), L_B(X,Y))
		\]
		and
		\[
			N(X,Y) = (N_1(X) + N_3(Y),\ N_2(X) + N_4(Y))
		\]
		are invertible, such that the equations
		\begin{align}
			\begin{split}
			\label{eq:equiv1}
				L_A(x,y)^{2^{2k}(2^k+1)} \,+ &\, L_A(x,y)^{2^{2k}} L_B(x,y)^{2^k} +\beta L_B(x,y)^{2^k+1} \\
				&= N_1(x^{(2^\ell+1)2^{2\ell}} + x^{2^{2\ell}} y^{2^\ell} + \beta' y^{2^\ell+1}) + N_3(xy) + M_A(x,y),
			\end{split}\\
			\label{eq:equiv2}
			L_A(x,y)L_B(x,y) &= N_2(x^{(2^\ell+1)2^{2\ell}} + x^{2^{2\ell}} y^{2^\ell} + \beta' y^{2^\ell+1}) + N_4(xy) + M_B(x,y)
		\end{align}
		hold for all $x,y \in \Fpm$. We write $L_A(X,Y) = L_1(X) + L_3(Y)$ and $L_B(X,Y) = L_2(X) + L_4(Y)$ for linearized polynomials $L_1(X), \dots, L_4(X) \in \F_{2^m}[X]$. Hence,
		\[
			L(X,Y) = \left(L_1(X)+L_3(Y),\ L_2(X) + L_4(Y)\right).
		\]		
		Write 
		\begin{align*}
			L_1(X) = \sum_{i=0}^{m-1} a_i X^{2^i},&& L_2(X) = \sum_{i=0}^{m-1} b_i X^{2^i},&& L_3(Y) = \sum_{i=0}^{m-1} \overline{a}_i Y^{2^i},&& L_4(Y) = \sum_{i=0}^{m-1} \overline{b}_i Y^{2^i}.
		\end{align*}
		Analogously, define linearized polynomials $M_1(X), \dots, M_4(X) \in \F_{2^m}[X]$ such that
		\[
			M(X,Y) = (M_1(X) + M_3(Y), M_2(X) + M_4(Y)).
		\]
		For the remainder of the proof, let $x,y \in \Fpm$. We first prove the following claim.
		
		\paragraph{Claim.} \emph{If $f_{k,1,\beta}$ and $f_{\ell,1,\beta'}$ are EA-equivalent, then $k=\ell$ and each of the linearized polynomials $L_1(X), L_2(X), L_3(Y), L_4(Y)$ is a monomial or zero.}\\
		
		\noindent We will prove the result for $y=0$ and obtain statements for $L_1(X)$ and $L_2(X)$. Using the same approach with $x=0$, identical statements can be obtained for $L_3(Y)$ and $L_4(Y)$. Let $y=0$. Then it follows from \cref{eq:equiv1} and $\cref{eq:equiv2}$ that
		\begin{align}
		\label{eq:y=0_1}
			L_1(x)^{2^{2k}(2^k+1)} + L_1(x)^{2^{2k}} L_2(x)^{2^k} +\beta L_2(x)^{2^k+1} &= N_1(x^{(2^\ell+1)2^{2\ell}}) + M_1(x),\\
		\label{eq:y=0_2}	
			L_1(x)L_2(x) &= N_2(x^{(2^\ell+1)2^{2\ell}}) + M_2(x)
		\end{align}
		for all $x \in \Fpm$. Write 
		\begin{align*}
			N_1(X) = \sum_{i=0}^{m-1} c_i X^{2^i}&& \text{and} &&N_2(X) = \sum_{i=0}^{m-1} d_i X^{2^{i-2\ell}}.
		\end{align*}
		Note that, for convenience, we shift the summation index of $N_2(X)$.\par
		
		As $L(X,Y)$ has to be invertible, it is not possible that both $L_1(X)$ and $L_2(X)$ are zero. First, suppose $L_1(X) \ne 0$ and $L_2(X) = 0$. For the case $L_1(X) = 0$ and $L_2(X) \ne 0$, an identical result can be obtained by symmetry. If $L_1(X) \ne 0$ and $L_2(X) = 0$, then it follows from \cref{eq:y=0_2} that $N_2(X)=M_2(X) = 0$ as the left-hand side is zero, and \cref{eq:y=0_1} becomes
		\begin{equation}
		\label{eq:L2=0-Gold}
			L_1(x)^{2^{2k}(2^k+1)} = N_1(x^{(2^\ell+1)2^{2\ell}}) + M_1(x).
		\end{equation}
		
		From \cref{eq:L2=0-Gold}, it follows that the Gold APN functions $x \mapsto x^{2^k+1}$ and $x \mapsto x^{2^\ell+1}$ on $\Fpm$ have to be EA-equivalent. It was shown by \textcite[Theorem~2.1]{budaghyancarletleander2008} that this implies $k = \ell$. The present authors~\cite[Theorem~4.1]{kasperszhou2020} moreover showed that if $m \ge 5$, the equivalence mappings between equivalent Gold APN functions are linearized monomials. In our case, this means the polynomial $L_1(X)$ is a linearized monomial. In summary, we obtain
		\begin{align}
		\label{eq:L1L2_onezero1}
			L_1(X) &= a_u X^{2^u}& \text{and}&& L_2(X) &= 0
		\end{align}
		for some $u \in \{0, \dots, m-1\}$ and $a_u \in \Fpm^*$. If we consider the case $L_1(X) = 0$ and $L_2(X) \ne 0$, we analogously obtain
		\begin{align}
		\label{eq:L1L2_onezero2}
			L_1(X) &= 0& \text{and}&& L_2(X) &=b_u X^{2^u}
		\end{align}
		for some $u \in \{0, \dots, m-1\}$ and $b_u \in \Fpm^*$. In both cases, $M_1(X)=M_2(X)=0$.\par
		
		Now, let both $L_1(X), L_2(X) \ne 0$. Then \cref{eq:y=0_2} becomes
		\begin{equation}
		\label{eq:y=0_L1L2nonzero}
			\sum_{i=0}^{m-1}a_ib_i x^{2^{i+1}} + \sum_{\substack{i,j=0,\\j \ne i}}^{m-1}a_ib_j x^{2^i+2^j} = \sum_{i=0}^{m-1}d_ix^{(2^\ell+1)2^i} + M_2(x).
		\end{equation}
		Note that the first sum on the left-hand side of \cref{eq:y=0_L1L2nonzero} is linearized. Hence, set $M_2(X) = \sum_{i=0}^{m-1}a_ib_i X^{2^{i+1}}$. We rewrite \cref{eq:y=0_L1L2nonzero} as 
		\[
			\sum_{0 \le i < j \le m-1} (a_ib_j + a_jb_i)x^{2^i+2^j} = \sum_{i=0}^{m-1}d_ix^{2^i+ 2^{i+\ell}}
		\]
		which implies that the equations
		\begin{align}
		\label{eq:Coefficients1}
			a_i b_{i+\ell} + a_{i+\ell} b_i 	&= d_i 	\quad\text{for all } i,\\
		\label{eq:Coefficients2}
			a_i b_j + a_j b_i 					&=0		\quad\ \text{for } j \ne i, i\pm \ell,
		\end{align}
		where the subscripts are calculated modulo $m$, have to hold. We separate the proof into two cases: first, the case that $d_i = 0$ for all $i = 0, \dots, m-1$, and, second, the case that $d_u \ne 0$ for some $u \in \{0,\dots,m-1\}$.
		
		\subparagraph{Case 1.} In this case, we show that if $d_i=0$ for all $i = 0, \dots, m-1$, similarly to~\cref{eq:L2=0-Gold}, the problem can be reduced to the equivalence problem of Gold APN functions that has been studied by the present authors \cite[Theorem~4.1]{kasperszhou2020}. Assume $d_i = 0$ for all $i = 0, \dots, m-1$, which means $N_2(X) = 0$. In this case, \cref{eq:Coefficients1} and \cref{eq:Coefficients2} combine to
		\begin{equation}
		\label{eq:Coefficients3}
			a_i b_j + a_j b_i =0 \quad \text{for } j \ne i.
		\end{equation}
		
		As $L_1(X)$ and $L_2(X)$ are both nonzero, each polynomial has at least one nonzero coefficient. Assume $a_u$ and $b_{u'}$ are nonzero, where $u,u' \in \{0, \dots, m-1\}$. If $u = u'$, the corresponding term in \cref{eq:y=0_2}, that is $a_u b_u X^{2^u+1}$, is linearized and only contributes to $M_2(X)$. If $u \ne u'$, then, by \cref{eq:Coefficients3},
		\[
			a_u b_{u'} + a_{u'} b_u = 0.
		\]
		Consequently, $a_{u'}$ and $b_u$ have to be nonzero as well, and $a_u, a_{u'}, b_u, b_{u'}$ have to meet the condition $\frac{a_u}{b_u} = \frac{a_{u'}}{b_{u'}}$. Define $\Delta = \frac{a_u}{b_u}$ and note that $\Delta \ne 0$. It follows that $(a_j, b_j)$ satisfies either
		\begin{align}
		\label{eq:TypeI_TypeII}
			a_j = b_j &= 0	&&\text{or}	&& \frac{a_j}{b_j} = \Delta
		\end{align}
		for all $j = 0, \dots, m-1$. Consequently, $b_j = \delta a_j$, where $\delta = \frac{1}{\Delta}$, for all $j=0, \dots, m-1$, and $L_2(X)$ is a multiple of $L_1(X)$, namely 
		\begin{equation}
		\label{eq:L2_multiple_L1}
			L_2(X) = \delta L_1(X).
		\end{equation}
		
		We plug $L_1(X)$ and $L_2(X)$ into \cref{eq:y=0_1} and obtain
		\begin{equation}
		\label{eq:PolynomialInL1}
			L_1(x)^{2^{2k}(2^k+1)} + \delta^{2^k} L_1(x)^{2^k(2^k+1)} +  \beta\delta^{2^k+1} L_1(x)^{2^k+1} = N_1(x^{(2^\ell + 1)2^{2\ell}}) + M_1(x).
		\end{equation}
		Define a polynomial $T(X) \in \Fpm[X]$ as
		\[
			T(X) = X^{2^{2k}} + \delta^{2^k} X^{2^k} + \beta \delta^{2^k+1} X
		\]
		and rewrite the left-hand side of \cref{eq:PolynomialInL1} as
		\[
			T(L_1(x)^{2^k+1}).
		\]
		
		We show that $T(X)$ is a permutation polynomial. Since $T(X)$ is linearized, it is sufficient to show that $T(X)$ has no nonzero roots. If $T(X)$ had a nonzero root, it would also be a root of the polynomial
		\[
			T'(X) = X^{2^{2k}-1} + \delta^{2^k}X^{2^k-1} + \beta \delta^{2^k+1}.
		\]
		Substitute $X^{2^k-1}$ by $Z$. Note that this substitution is one-to-one since $\gcd(2^k-1,2^m-1) = 2^{\gcd(k,m)} - 1 = 1$. We obtain
		\[
			T'(Z) = Z^{2^k+1} + \delta^{2^k} Z + \beta \delta^{2^k+1}.
		\]
		By \autoref{lem:polynomial_transformation}, the polynomial $T'(Z)$ has no root if and only if $P(X) = X^{2^k+1} + X + \beta$ has no root. This holds by the definition of $\beta$.\par
		
		Hence, we denote by $T^{-1}(X)$ the inverse of $T(X)$ and rewrite \cref{eq:PolynomialInL1} as
		\begin{equation}
		\label{eq:GoldCase}
			L_1(x)^{2^k+1} = T^{-1}(N_1(x^{(2^\ell+1)2^{2\ell}})) + T^{-1}(M_1(x)).
		\end{equation}
		Since $T^{-1}(X)$ is also linearized, \cref{eq:GoldCase} describes the equivalence problem of two Gold APN functions as in the case that exactly one of $L_1(X)$ and $L_2(X)$ is zero. By \cite[Theorem~4.1]{kasperszhou2020}, it follows that $L_1(X)$ is a monomial. Because of \cref{eq:L2_multiple_L1}, the polynomials $L_1(X)$ and $L_2(X)$ are monomials of the same degree:
		\begin{align}
		\label{eq:L1L2_samedegree}
			L_1(X) &= a_u X^{2^u}& \text{and}&& L_2(X) &= b_u X^{2^u}.
		\end{align}
		Moreover, $M_2(X)=a_u b_u X^{2^{u+1}}$ and $M_1(X)=0$.
		
		\subparagraph{Case 2.} Consider \cref{eq:Coefficients1} and \cref{eq:Coefficients2} again and assume $d_u \ne 0$ for some $u \in \{0,\dots,m-1\}$ which means $N_2(X) \ne 0$. We will show that in this case, similarly to Case~1, the polynomials $L_1(X)$ and $L_2(X)$ need to be monomials. In contrast to Case~1, however, now $L_1(X)$ and $L_2(X)$ will have different degrees.\par
		If $d_u \ne 0$, then, by \cref{eq:Coefficients1}, $a_u$ and $b_u$ cannot be zero at the same time. We will separate the proof of Case~2 into two subcases: first, Case~2.1, where both $a_u$ and $b_u$ are nonzero, and second, Case~2.2, where exactly one of $a_u$ and $b_u$ is nonzero. Both these cases will be separated into several subcases again.
		
		\subparagraph{Case 2.1.}
		Assume $a_u \ne 0$ and $b_u \ne 0$. It follows from \cref{eq:Coefficients2} that all pairs $(a_j,b_j)$, where $j \ne u, u \pm \ell$, satisfy \cref{eq:TypeI_TypeII}. We will first show that the only possible nonzero coefficients are $a_j, b_j$ for $j = u, u \pm \ell, u \pm 2\ell$.\par
		
		By way of contradiction, assume there exists $\ell' \ne 0, \pm\ell, \pm 2\ell$ such that $a_{u+\ell'}$ and $b_{u+\ell'}$ are nonzero. By \cref{eq:TypeI_TypeII}, this implies $\frac{a_{u+\ell'}}{b_{u+\ell'}} = \Delta$. Since $u+\ell' \pm \ell \ne u \pm \ell$, it follows from~\cref{eq:Coefficients1} with $i =u + \ell'$ that both $(a_{u+\ell},b_{u+\ell})$ and $(a_{u-\ell},b_{u-\ell})$ also have to satisfy one of the equations in \cref{eq:TypeI_TypeII}. Hence, \cref{eq:TypeI_TypeII} holds for all $j=0, \dots, m-1$ which means that $L_2(X)$ is a multiple of $L_1(X)$. However, now \cref{eq:y=0_2} implies $N_2(X) = 0$. This is a contradiction.\par
		
		Hence, we assume $a_j=b_j = 0$ for $j \ne u, u \pm \ell, u \pm 2\ell$ for the remainder of Case~2.1. We separate its proof into two subcases, both will lead to contradictions.
		
		\subparagraph{Case 2.1.1.}
		Suppose $a_{u \pm 2\ell}=b_{u \pm 2 \ell}=0$.
		In this case, we obtain only one equation from \cref{eq:Coefficients1}, namely 
		\[
			a_{u-\ell}b_{u+\ell} + a_{u+\ell}b_{u-\ell} = 0.
		\]
		Hence, either
		\begin{enumerate}[label=(\roman*), ref=(\roman*)]
			\item\label{item:Case212_1} $a_{u-\ell} = a_{u+\ell} = 0$ or $b_{u-\ell} = b_{u+\ell} = 0$, meaning that one of $L_1(X)$ and $L_2(X)$ is a monomial and the other one has at most three nonzero coefficients, or
			\item\label{item:Case212_2} $a_{u-\ell} = b_{u-\ell} = 0$ or $a_{u+\ell} = b_{u+\ell} = 0$, meaning that both $L_1(X)$ and $L_2(X)$ have at most two nonzero coefficients, or
			\item\label{item:Case212_3} $a_{u \pm \ell},b_{u \pm \ell} \ne 0$ and $\frac{a_{u-\ell}}{b_{u-\ell}} = \frac{a_{u+\ell}}{b_{u+\ell}}$, meaning that both $L_1(X)$ and $L_2(X)$ are trinomials.
		\end{enumerate}
		We will consider each of these three subcases.
		
		\subparagraph{Subcase~\ref{item:Case212_1}.}
		Assume $b_{u-\ell} = b_{u+\ell} = 0$. The case $a_{u-\ell} = a_{u+\ell} = 0$ follows by symmetry. We consider polynomials
  		\begin{align*}
			L_1(X) &= a_{u-\ell}X^{2^{u-\ell}} + a_u X^{2^u} + a_{u+\ell}X^{2^{u+\ell}} & \text{and}&& L_2(X) &= b_u X^{2^u}
		\end{align*}
		which we plug into the left-hand side of \cref{eq:y=0_1}. We obtain
		\begingroup
		\allowdisplaybreaks
  		\begin{align}
  			\nonumber
  			L_1(x)^{2^{2k}(2^k+1)} &=
			a_{u-\ell}^{2^{2k}(2^k+1)} x^{2^{u-\ell+2k}(2^k+1)} +
			a_{u}^{2^{2k}(2^k+1)} x^{2^{u+2k}(2^k+1)}
			\\&\nonumber\quad+
			a_{u+\ell}^{2^{2k}(2^k+1)} x^{2^{u+\ell+2k}(2^k+1)} +
			a_{u-\ell}^{2^{3k}}a_u^{2^{2k}} x^{2^{u+2k}(2^{k-\ell}+1)}
			\\&\label{eq:Case212_Subcase1_1}\quad+
			a_{u}^{2^{3k}}a_{u+\ell}^{2^{2k}} x^{2^{u+\ell+2k}(2^{k-\ell}+1)} +
			a_{u+\ell}^{2^{3k}}a_{u-\ell}^{2^{2k}} x^{2^{u-\ell+2k}(2^{k+2\ell}+1)}
			\\&\nonumber\quad+
			a_{u-\ell}^{2^{3k}}a_{u+\ell}^{2^{2k}} x^{2^{u+\ell+2k}(2^{k-2\ell}+1)} +
			a_{u}^{2^{3k}}a_{u-\ell}^{2^{2k}} x^{2^{u-\ell+2k}(2^{k+\ell}+1)}
			\\&\nonumber\quad+
			a_{u+\ell}^{2^{3k}}a_{u}^{2^{2k}} x^{2^{u+2k}(2^{k+\ell}+1)}
		\end{align}
		\endgroup
		and
  		\begin{equation}
			\begin{split}
			\label{eq:Case212_Subcase1_2}
				L_1(x)^{2^{2k}} L_2(x)^{2^k} &=
				a_{u-\ell}^{2^{2k}} b_u^{2^k} x^{2^{u+k}(2^{k-\ell}+1)} +
				a_{u}^{2^{2k}} b_u^{2^k} x^{2^{u+k}(2^k+1)}
				\\&\quad+
				a_{u+\ell}^{2^{2k}} b_u^{2^k} x^{2^{u+k}(2^{k+\ell}+1)}
			\end{split}
		\end{equation}
		and
		\begin{equation}
		\label{eq:Case212_Subcase1_3}
			\beta L_2(x)^{2^k+1} = \beta b_u^{2^k+1} x^{2^u(2^k+1)}.
		\end{equation}
		Recall that the right-hand side of \cref{eq:y=0_1} is
		\[
			\sum_{i=0}^{m-1}c_i x^{2^{i+2\ell}(2^\ell+1)} + M_1(x).
		\]
		
		We will show that not all of the first three terms of \cref{eq:Case212_Subcase1_1}, that all contain the factor~$x^{2^k+1}$, can be canceled simultaneously. First, as $0 < \ell < \frac{m}{2}$, the terms cannot cancel each other. Second, if $\ell = \frac{m}{2}-k$, the exponent of $x$ in the sixth term can be written as $2^{u-\frac{m}{2}+2k}(2^k+1)$, but by the same reasoning as above, the sixth term cannot cancel any of the first three terms. Third, if $m$ is odd and $k < \frac{m}{4}$, it is possible that $\ell = 2k$. In this case, the term in~\cref{eq:Case212_Subcase1_3}, the first term of \cref{eq:Case212_Subcase1_2} and the first term of \cref{eq:Case212_Subcase1_1} all contain the factor $x^{2^{u}(2^k+1)}$ and could potentially cancel each other, but the second and third term of \cref{eq:Case212_Subcase1_1} cannot be canceled. Analogously, the third term of \cref{eq:Case212_Subcase1_1} could be canceled if $m$ is odd and $ \frac{m}{4}<k<\frac{m}{2}$ and $\ell = -2k$ but the first and second term would remain. Fourth, if $\ell = k$, the first and the second term of \cref{eq:Case212_Subcase1_1} could be canceled by the second term of~\cref{eq:Case212_Subcase1_2} and the seventh term of \cref{eq:Case212_Subcase1_1}, respectively. However, the third term would remain. In summary, for arbitrary $k$ and $\ell$, the third term of \cref{eq:Case212_Subcase1_1} can never be canceled.\par
		
		We now compare the left-hand side and the right-hand side of \cref{eq:y=0_1}: The summands on the left-hand side that contain the factor $x^{2^i(2^k+1)}$ can only be represented on the right-hand side, if $k = \ell$. Hence, assume $k=\ell$. Now, the fourth and the fifth summand of \cref{eq:Case212_Subcase1_1} as well as the first summand of \cref{eq:Case212_Subcase1_2} become linearized. Consequently,
		\[
			M_1(X) = a_{u-k}^{2^{2k}} b_u^{2^k} X^{2^{u+k+1}} + a_{u-k}^{2^{3k}}a_u^{2^{2k}} X^{2^{u+2k+1}} +
			a_{u}^{2^{3k}}a_{u+k}^{2^{2k}} X^{2^{u+3k+1}}.
		\]
		Next, consider the eighth and the ninth term of \cref{eq:Case212_Subcase1_1} where the eighth term can be summarized with the third term of \cref{eq:Case212_Subcase1_2}:
		\begin{align*}
			a_{u+k}^{2^{3k}}a_{u}^{2^{2k}} x^{2^{u+2k}(2^{2k}+1)},&&
			(a_{u}^{2^{3k}}a_{u-k}^{2^{2k}} + a_{u+k}^{2^{2k}} b_u^{2^k}) x^{2^{u+k}(2^{2k}+1)}.
		\end{align*}
		As $m \ge 5$ and $\gcd(k,m) = 1$, we have $2k \not\equiv \pm k \pmod{m}$. Hence, these terms cannot be represented in the form $c_i x^{2^{i+2k}(2^k+1)}$ on the right-hand side of \cref{eq:y=0_1} which means that their coefficients have to be zero. As $a_u \ne 0$, it follows that $a_{u+k} = 0$ which then implies $a_{u-k} = 0$. Hence, $L_1(X)$ and $L_2(X)$ are monomials of the same degree. As this implies $N_2(X)=0$, it contradicts the assumption of Case~2.
		
		\subparagraph{Subcase~\ref{item:Case212_2}.}
		Assume $a_{u-\ell} = b_{u-\ell} = 0$. The case $a_{u+\ell} = b_{u+\ell} = 0$ follows by symmetry. In our case
  		\begin{align*}
			L_1(X) &= a_u X^{2^u} + a_{u+\ell}X^{2^{u+\ell}} & \text{and}&& L_2(X) &= b_u X^{2^u} + b_{u+\ell}X^{2^{u+\ell}}.
		\end{align*}
		On the left-hand side of \cref{eq:y=0_1}, we obtain
  		\[
	  		\begin{split}
				L_1(x)^{2^{2k}(2^k+1)} &=
				a_{u}^{2^{2k}(2^k+1)} x^{2^{u+2k}(2^k+1)} +
				a_{u+\ell}^{2^{2k}(2^k+1)} x^{2^{u+\ell+2k}(2^k+1)}
				\\&\quad+
				a_{u}^{2^{3k}}a_{u+\ell}^{2^{2k}} x^{2^{u+\ell+2k}(2^{k-\ell}+1)} +
				a_{u+\ell}^{2^{3k}}a_{u}^{2^{2k}} x^{2^{u+2k}(2^{k+\ell}+1)}
			\end{split}
		\]
		and
		\[
	  		\begin{split}
				L_1(x)^{2^{2k}} L_2(x)^{2^k} &=
				a_{u}^{2^{2k}} b_u^{2^k} x^{2^{u+k}(2^k+1)} +
				a_{u+\ell}^{2^{2k}} b_{u+\ell}^{2^k} x^{2^{u+\ell+k}(2^k+1)}
				\\&\quad+
				a_{u}^{2^{2k}} b_{u+\ell}^{2^k} x^{2^{u+\ell+k}(2^{k-\ell}+1)} +
				a_{u+\ell}^{2^{2k}} b_u^{2^k} x^{2^{u+k}(2^{k+\ell}+1)}
			\end{split}
		\]
		and
		\[
			\begin{split}
				\beta L_2(x)^{2^k+1} &= 
				\beta b_u^{2^k+1} x^{2^u(2^k+1)} +
				\beta b_{u+\ell}^{2^k+1} x^{2^{u+\ell}(2^k+1)}
				\\&\quad+
				\beta b_u^{2^k} b_{u+\ell} x^{2^{u+\ell}(2^{k-\ell}+1)} +
				\beta b_{u+\ell}^{2^k} b_{u} x^{2^{u}(2^{k+\ell}+1)}.
			\end{split}
		\]

		By similar reasoning as in Subcase~\ref{item:Case212_1}, not all summands containing the factor $x^{2^k+1}$ can be canceled simultaneously. Consequently, we need $k=\ell$ for these terms to be represented on the right-hand side of \cref{eq:y=0_1}. If $k=\ell$, the following terms, which cannot be canceled, occur on the left-hand side of \cref{eq:y=0_1}:
		\begin{align*}
			a_{u+k}^{2^{3k}}a_{u}^{2^{2k}} x^{2^{u+2k}(2^{2k}+1)},&&
			a_{u+k}^{2^{2k}} b_u^{2^k} x^{2^{u+k}(2^{2k}+1)},&&
			\beta b_{u+k}^{2^{2k}} b_u x^{2^u(2^{2k}+1)}.
		\end{align*}
		As they cannot be represented in the form $c_i x^{2^{i+2k}(2^k+1)}$ on the right-hand side of \cref{eq:y=0_1}, their coefficients need to be zero. Hence $a_{u+k} = b_{u+k} = 0$, which means $L_1(X)$ and $L_2(X)$ are monomials of the same degree. As in Subcase~\ref{item:Case212_1}, this is a contradiction.
		
		\subparagraph{Subcase~\ref{item:Case212_3}.}
		Now,
		\begin{align*}
		L_1(X) &= a_{u-\ell}X^{2^{u-\ell}} + a_u X^{2^u} + a_{u+\ell}X^{2^{u+\ell}}\\ \text{and } L_2(X) &= b_{u-\ell}X^{2^{u-\ell}} + b_u X^{2^u} + b_{u+\ell}X^{2^{u+\ell}},
		\end{align*}
		where all coefficients are nonzero and $\frac{a_{u-\ell}}{b_{u-\ell}} = \frac{a_{u+\ell}}{b_{u+\ell}}$. We plug these polynomials into the left-hand side of \cref{eq:y=0_1}. By similar reasoning as in Subcases~(i) and (ii), not all terms containing the factor $x^{2^k+1}$ can be canceled. Hence, $k=\ell$. Now, the left-hand side contains the following two summands that cannot be canceled:
		\begin{align*}
			a_{u+k}^{2^{3k}}a_{u}^{2^{2k}} x^{2^{u+2k}(2^{2k}+1)},&&
			\beta b_{u}^{2^{2k}} b_{u-k} x^{2^{u-k}(2^{2k}+1)}.
		\end{align*}
		As none of them can be represented on the right-hand side of \cref{eq:y=0_1}, their coefficients need to be zero, which means that $a_{u+k} = b_{u-k} = 0$. This contradicts our assumption.
		
		\subparagraph{Case~2.1.2.}
		Suppose that not all of  $a_{u \pm 2 \ell}, b_{u \pm 2 \ell}$ are zero. Recall that all pairs~$(a_j, b_j)$ where $j \ne u, u\pm \ell$ have to satisfy \cref{eq:TypeI_TypeII}. We consider the case that $a_{u+2\ell}$ and $b_{u+2\ell}$ are nonzero. One can obtain an almost identical result by symmetry when assuming that $a_{u-2\ell}$ and $b_{u-2\ell}$ are nonzero.\par
		
		If $a_{u+2\ell}, b_{u+2\ell} \ne 0$, then, by \cref{eq:TypeI_TypeII}, $\frac{a_{u+2\ell}}{b_{u+2\ell}} = \Delta$. It follows from \cref{eq:Coefficients2} that also $(a_{u-2\ell},b_{u-2\ell})$ and $(a_{u-\ell}, b_{u-\ell})$ have to satisfy \cref{eq:TypeI_TypeII}. However, \cref{eq:Coefficients2} does not provide any restriction on the values of $a_{u+\ell}$ and $b_{u+\ell}$. If $(a_{u+\ell}, b_{u+\ell})$ satisfies \cref{eq:TypeI_TypeII}, then all $(a_j,b_j)$ do and we know from the beginning of Case~2.1 that this implies $N_2(X) = 0$. As before, this is a contradiction. If $(a_{u+\ell}, b_{u+\ell})$ does not satisfy \cref{eq:TypeI_TypeII}, then it follows from \cref{eq:Coefficients2} that $a_j=b_j=0$ for $j =u-\ell, u-2\ell$. Hence, 
		\begin{align*}
		L_1(X) &= a_u X^{2^u} + a_{u+\ell}X^{2^{u+\ell}} + a_{u+2\ell}X^{2^{u+2\ell}}\\
		\text{and } L_2(X) &= b_u X^{2^u} + b_{u+\ell}X^{2^{u+\ell}} + b_{u+2\ell}X^{2^{u+2\ell}}.
		\end{align*}
		
		As $\frac{a_u}{b_u} = \frac{a_{u+2\ell}}{b_{u+2\ell}}$, this case is similar to Case 2.1.1, Subcase~\ref{item:Case212_3}, when we substitute $u$ by $u+\ell$, with the only difference that now, one of the middle coefficients $a_{u+\ell}, b_{u+\ell}$ can be zero. However, the arguments used in the previous case leading to the conclusion $k=\ell$ still hold. If $k = \ell$, the left-hand side of \cref{eq:y=0_1} contains the following terms that cannot be canceled:
		\begin{align*}
			a_{u+2k}^{2^{3k}}a_{u}^{2^{2k}} x^{2^{u+2k}(2^{3k}+1)},&&
			a_{u+2k}^{2^{2k}}b_{u}^{2^k} x^{2^{u+k}(2^{3k}+1)},&&
			\beta b_{u+2k}^{2^k} b_{u} x^{2^u(2^{3k}+1)}.
		\end{align*}		
		They cannot be represented on the right-hand side of \cref{eq:y=0_1}, hence, their coefficients need to be zero. This contradicts our assumption that $a_u, a_{u+2k}, b_u, b_{u+2k}$ are nonzero.
		
		\subparagraph{Case~2.2.}
		Assume, exactly one of $a_u$ and $b_u$ is nonzero. We show the case $a_u \ne 0$ and $b_u = 0$. The case $a_u = 0$ and $b_u \ne 0$ can be proved analogously. So, assume $a_u \ne 0$ and $b_u = 0$. From \cref{eq:Coefficients1} with $i = u$, we obtain the equation
		\[
			a_u b_{u+\ell} = d_u.
		\]
		As $d_u \ne 0$, it follows that $b_{u+\ell} \ne 0$. From \cref{eq:Coefficients2} with $i = u$, we obtain
		\[
			a_u b_j = 0 \quad \text{for } j \ne u, u\pm \ell.
		\]
		Consequently, $b_j = 0$ for $j \ne u \pm \ell$. Now, it follows from \cref{eq:Coefficients2} with $i = u + \ell$ that
		\[
			a_j b_{u+\ell} = 0 \quad \text{for } j \ne u-\ell,u, u + \ell, u + 2\ell.
		\]
		 Consequently, $a_j = 0$ for $j \ne u-\ell,u, u + \ell, u + 2\ell$.	We will separate the proof of Case~2.2 into two subcases: in Case~2.2.1, we consider $b_{u-\ell} \ne 0$, in Case~2.2.2, we suppose $b_{u-\ell} = 0$.
		
		\subparagraph{Case 2.2.1.} Assume $b_{u-\ell} \ne 0$. From \cref{eq:Coefficients2} with $i = u-\ell$ and $j = u+2\ell$, we obtain
		\[
			a_{u+2\ell} b_{u-\ell} = 0,
		\]
		which implies $a_{u+2\ell} = 0$, and
		\[
			a_{u-\ell}b_{u+\ell} + a_{u+\ell} b_{u-\ell} = 0,
		\]
		which, recalling that $b_{u+\ell}$ is nonzero, implies either $a_{u-\ell} = a_{u+\ell} = 0$ or $a_{u-\ell}, a_{u+\ell} \ne 0$ and $\frac{a_{u-\ell}}{b_{u-\ell}} = \frac{a_{u+\ell}}{b_{u+\ell}}$. We separate these two subcases:
		
		\subparagraph{Subcase (i).} Assume $a_{u-\ell} = a_{u+\ell} = 0$. Then
		\begin{align*}
			L_1(X) &= a_u X^{2^u}	&\text{and}	&&L_2(X) &= b_{u-\ell}X^{2^{u-\ell}} + b_{u+\ell}X^{2^{u+\ell}}.
		\end{align*}
		We plug these polynomials into the left-hand side of \cref{eq:y=0_1} and obtain
		\[
			L_1(x)^{2^{2k}(2^k+1)} = a_{u}^{2^{2k}(2^k+1)} x^{2^{u+2k}(2^k+1)}
		\]
		and
		\[
			L_1(x)^{2^{2k}} L_2(x)^{2^k} =
			a_{u}^{2^{2k}} b_{u-\ell}^{2^k} x^{2^{u-\ell+k}(2^{k+\ell}+1)} +
			a_{u}^{2^{2k}} b_{u+\ell}^{2^k} x^{2^{u+\ell+k}(2^{k-\ell}+1)}
		\]
		and
		\begin{equation}
			\begin{split}
			\label{eq:Case221_Subcase1_1}
				\beta L_2(x)^{2^k+1} &= 
				\beta b_{u-\ell}^{2^k+1} x^{2^{u-\ell}(2^k+1)} +
				\beta b_{u+\ell}^{2^k+1} x^{2^{u+\ell}(2^k+1)}
				\\&\quad +
				\beta b_{u-\ell}^{2^k} b_{u+\ell} x^{2^{u+\ell}(2^{k-2\ell}+1)} +
				\beta b_{u+\ell}^{2^k} b_{u-\ell} x^{2^{u-\ell}(2^{k+2\ell}+1)}.
			\end{split}
		\end{equation}
		As in previous cases, if $k \ne \ell$, not all terms containing the factor $x^{2^k+1}$ can be canceled simultaneously. Thus, we need $k = \ell$. However, if $k = \ell$, the left-hand side \cref{eq:y=0_1} contains the term
		\[
			a_u^{2^{2k}}b_{u-k}^{2^k} x^{2^u(2^{2k}+1)}
		\]
		that cannot be represented in the form $c_i x^{2^{i+2k}(2^k+1)}$ on the right-hand side of \cref{eq:y=0_1}. Hence, its coefficient needs to be zero which contradicts our assumption.
		
		\subparagraph{Subcase (ii).} Assume $a_{u-\ell}, a_{u+\ell} \ne 0$ and $\frac{a_{u-\ell}}{b_{u-\ell}} = \frac{a_{u+\ell}}{b_{u+\ell}}$. Then
		\begin{align*}
			L_1(X) &= a_{u-\ell} X^{2^{u-\ell}} + a_u X^{2^u} +a_{u+\ell} X^{2^{u+\ell}}	&\text{and}	&&L_2(X) &= b_{u-\ell}X^{2^{u-\ell}} + b_{u+\ell}X^{2^{u+\ell}}.
		\end{align*}
		We plug these polynomials into the left-hand side of \cref{eq:y=0_1}. Then $L_1(x)^{2^{2k}(2^k+1)}$ is as in~\cref{eq:Case212_Subcase1_1} and $\beta L_2(x)^{2^k+1}$ is as in \cref{eq:Case221_Subcase1_1}. Moreover,
  		\begin{equation}
	  		\begin{split}
		  		\label{eq:Case221_Subcase2_1}
				L_1(x)^{2^{2k}} L_2(x)^{2^k} &=
				a_{u-\ell}^{2^{2k}} b_{u-\ell}^{2^k} x^{2^{u-\ell+k}(2^k+1)} +
				a_{u+\ell}^{2^{2k}} b_{u+\ell}^{2^k} x^{2^{u+\ell+k}(2^k+1)}
				\\&\quad+
				a_{u-\ell}^{2^{2k}} b_{u+\ell}^{2^k} x^{2^{u+\ell+k}(2^{k-2\ell}+1)} +
				a_{u}^{2^{2k}} b_{u-\ell}^{2^k} x^{2^{u-\ell+k}(2^{k+\ell}+1)}
				\\&\quad+
				a_{u}^{2^{2k}} b_{u+\ell}^{2^k} x^{2^{u+\ell+k}(2^{k-\ell}+1)} +
				a_{u+\ell}^{2^{2k}} b_{u-\ell}^{2^k} x^{2^{u-\ell+k}(2^{k+2\ell}+1)}.
	  		\end{split}
		\end{equation}
		By the same reasoning as in Subcase~(i), it follows that $k=\ell$. However, if $k = \ell$, then the fourth term of \cref{eq:Case221_Subcase2_1} cannot be canceled by any other terms on the left-hand side of~\cref{eq:y=0_1}, neither can it be represented on the right-hand side of~\cref{eq:y=0_1}. This implies $b_{u-\ell} = 0$ which contradicts our assumption.
		
		\subparagraph{Case~2.2.2.}
		Assume $b_{u-\ell} = 0$. From \cref{eq:Coefficients2} with $i = u+\ell$ and $j = u-\ell$, it follows that
		\[
			a_{u-\ell}b_{u+\ell} = 0
		\]
		which, recalling that $b_{u+\ell} \ne 0$, implies $a_{u-\ell} = 0$. Then
		\begin{align*}
		L_1(X) &= a_u X^{2^u} + a_{u+\ell} X^{2^{u+\ell}} + a_{u+2\ell} X^{2^{u+2\ell}}	&\text{and}	&&L_2(X) &= b_{u+\ell}X^{2^{u+\ell}}.
		\end{align*}
		Plugging these polynomials into \cref{eq:y=0_1}, the expressions $L_1(x)^{2^{2k}(2^k+1)}$, $L_1(x)^{2^{2k}} L_2(x)^{2^k}$ and $\beta L_2(x)^{2^k+1}$ are as in \cref{eq:Case212_Subcase1_1}, \cref{eq:Case212_Subcase1_2} and \cref{eq:Case212_Subcase1_3}, respectively, where we substitute $u$ by $u+\ell$. By the same reasoning as in Case~2.1.1, Subcase~(i), it follows that $k=\ell$. If $k=\ell$, analogously to Case~2.1.1, Subcase~(i), the following terms occur on the left-hand side of~\cref{eq:y=0_1}:
		\begin{align*}
			a_{u+2k}^{2^{3k}}a_{u}^{2^{2k}} x^{2^{u+2k}(2^{3k}+1)},&&
			(a_{u+k}^{2^{3k}}a_{u}^{2^{2k}} + a_{u+2k}^{2^{2k}} b_{u+k}^{2^k}) x^{2^{u+2k}(2^{2k}+1)}.
		\end{align*}
		As neither of them can be represented on the right-hand side of \cref{eq:y=0_1}, their coefficients need to be zero. As $a_u \ne 0$, it follows that $a_{u+2k} = 0$, and, consequently, $a_{u+k}=0$. Hence, $L_1(X)$ and $L_2(X)$ are monomials of the form
		\begin{align}
		\label{eq:L1L2_differentdegree1}
			L_1(X) &= a_u X^{2^u} &\text{and}	&& L_2(X) = b_{u+k} X^{2^{u+k}},
		\end{align}
		and $M_1(X) = a_u^{2^{2k}}b_{u+k}^{2^k} X^{2^{u+2k+1}}$.\par
		
		Note that if we consider Case 2.2 with $a_u = 0$ and $b_u \ne 0$, we obtain
		\begin{align}
		\label{eq:L1L2_differentdegree2}
			L_1(X) &= a_{u+k} X^{2^{u+k}} &\text{and}	&& L_2(X) = b_{u} X^{2^{u}}
		\end{align}
		and $M_1(X)=a_{u+k}^{2^{2k}} b_u^{2^k} X^{2^{u+2k+1}}$ from Case 2.2.2. This concludes the \textbf{proof of our Claim}.\\
		
		\noindent We summarize the results we have obtained so far. If the Taniguchi APN functions $f_{k,1,\beta}$ and $f_{\ell,1,\beta'}$ are EA-equivalent, then $k = \ell$ and $L_1(X)$ and $L_2(X)$ meet the following conditions: either, one of the polynomials $L_1(X)$ and $L_2(X)$ is zero and the other one is a monomial, see \cref{eq:L1L2_onezero1} and \cref{eq:L1L2_onezero2}, or both $L_1(X)$ and $L_2(X)$ are monomials, either of the same degree or of degrees $u$ and $u+k$, see \cref{eq:L1L2_samedegree}, \cref{eq:L1L2_differentdegree1} and \cref{eq:L1L2_differentdegree2}. Vice versa, the same statements hold for $L_3(Y)$ and $L_4(Y)$.\par
		It remains to be shown that the EA-equivalence of $f_{k,1,\beta}$ and $f_{k,1,\beta'}$ implies $\beta' = \beta^{2^i}$ for some $i \in \{0,\dots,m-1\}$. Combining the results on $L_1(X),L_2(X),L_3(Y),L_4(Y)$ mentioned above, it is clear that the polynomials $L_A(X,Y)$ and $L_B(X,Y)$ have to be of one of the following forms:

		\begin{enumerate}[label=\textbf{(\alph*)}, ref=\textbf{(\alph*)}]
			\item \label{item:a} $L_A(X,Y) = a_u X^{2^u} + \overline{a}_w Y^{2^w}$ and $L_B(X,Y) = b_u X^{2^u} + \overline{b}_w Y^{2^w}$,
			
			\item \label{item:b} $L_A(X,Y) = a_u X^{2^u} + \overline{a}_w Y^{2^w}$ and $L_B(X,Y) = b_u X^{2^u} + \overline{b}_{w+k} Y^{2^{w+k}}$,
			
			\item \label{item:c} $L_A(X,Y) = a_u X^{2^u} + \overline{a}_{w+k} Y^{2^{w+k}}$ and $L_B(X,Y) = b_u X^{2^u} + \overline{b}_w Y^{2^w}$,
			
			\item \label{item:d} $L_A(X,Y) = a_u X^{2^u} + \overline{a}_w Y^{2^w}$ and $L_B(X,Y) = b_{u+k} X^{2^{u+k}} + \overline{b}_w Y^{2^w}$,
			
			\item \label{item:e} $L_A(X,Y) = a_{u+k} X^{2^{u+k}} + \overline{a}_w Y^{2^w}$ and $L_B(X,Y) = b_u X^{2^u} + \overline{b}_w Y^{2^w}$,
			
			\item \label{item:f} $L_A(X,Y) = a_u X^{2^u} + \overline{a}_w Y^{2^w}$ and $L_B(X,Y) = b_{u+k} X^{2^{u+k}} + \overline{b}_{w+k} Y^{2^{w+k}}$,
			
			\item \label{item:g} $L_A(X,Y) = a_u X^{2^u} + \overline{a}_{w+k} Y^{2^{w+k}}$ and $L_B(X,Y) = b_{u+k} X^{2^{u+k}} + \overline{b}_w Y^{2^w}$,
			
			\item \label{item:h} $L_A(X,Y) = a_{u+k} X^{2^{u+k}} + \overline{a}_{w} Y^{2^{w}}$ and $L_B(X,Y) = b_{u} X^{2^{u}} + \overline{b}_{w+k} Y^{2^{w+k}}$,
			
			\item \label{item:i} $L_A(X,Y) = a_{u+k} X^{2^{u+k}} + \overline{a}_{w+k} Y^{2^{w+k}}$ and $L_B(X,Y) = b_{u} X^{2^{u}} + \overline{b}_w Y^{2^w}$.
		\end{enumerate}

		Note that, as $L(X,Y) = (L_A(X,Y), L_B(X,Y))$ has to be a permutation polynomial, it is neither possible that $L_A(X,Y)$ or $L_B(X,Y)$ is zero nor that both $L_A(X,Y)$ and $L_B(X,Y)$ depend only on $X$ or only on $Y$. We will show that all cases listed above lead to the conclusion that $L_A(X,Y)$ and $L_B(X,Y)$ need to be monomials of the same degree of the shape
		\begin{align}
		\label{eq:LALB_monomials}
			L_A(X,Y) &= a_u X^{2^u}& \text{and}&& L_B(X,Y) &= b_u Y^{2^u}.
		\end{align}
		
		We rewrite \cref{eq:equiv1} and \cref{eq:equiv2} considering $k = \ell$:
		\begin{align}
			\begin{split}
			\label{eq:equiv1_k=l}
				L_A(x,y)^{2^{2k}(2^k+1)} + &L_A(x,y)^{2^{2k}} L_B(x,y)^{2^k} +\beta L_B(x,y)^{2^k+1} \\
				&= N_1(x^{2^{2k}(2^k+1)} + x^{2^{2k}} y^{2^k} + \beta' y^{2^k+1}) + N_3(xy) + M_A(x,y),
			\end{split}\\
			\label{eq:equiv2_k=l}
			L_A(x,y)L_B(x,y) &= N_2(x^{2^{2k}(2^k+1)} + x^{2^{2k}} y^{2^k} + \beta' y^{2^k+1}) + N_4(xy) + M_B(x,y).
		\end{align}
		
		We will plug all the possible combinations \ref{item:a}--\ref{item:i} into these equations. We begin with~\ref{item:b}. By proceeding analogously, the cases \ref{item:c}--\ref{item:e} lead to the same result. If we plug the polynomials of \ref{item:b} into the left-hand side of \cref{eq:equiv2_k=l}, we obtain
		\begin{equation}
			\begin{split}
			\label{eq:b_equiv2}
				L_A(x,y)L_B(x,y) &= 
				a_u b_u x^{2^{u+1}} + \overline{a}_w \overline{b}_{w+k} y^{2^w(2^k+1)} 
				\\&\quad+ 
				a_u \overline{b}_{w+k} x^{2^u}y^{2^{w+k}} + \overline{a}_w b_u x^{2^u} y^{2^w}.
			\end{split}
		\end{equation}
		Note that the first term of \cref{eq:b_equiv2} is linearized. As there is no term containing the factor~$x^{2^k+1}$, we need $N_2(X) = 0$ on the right-hand side of \cref{eq:equiv2_k=l}. This implies, first, that the coefficient $\overline{a}_w \overline{b}_{w+k}$ of the second summand of \cref{eq:b_equiv2} has to be zero, and second, that the third and the fourth summand of \cref{eq:b_equiv2} cannot be represented simultaneously on the right-hand side of~\cref{eq:equiv2_k=l}. The coefficient of the second summand of \cref{eq:b_equiv2} is zero if $\overline{a}_{w}$ or $\overline{b}_{w+k}$ is zero. We separate the proof into two cases: \par
		
		\textbf{Case 1.} Assume $\overline{a}_w = 0$. Note that this implies $a_u \ne 0$ and $\overline{b}_{w+k} \ne 0$ as otherwise $L(X,Y)$ would not be a permutation polynomial. If $\overline{a}_w = 0$, then \cref{eq:equiv2_k=l} holds only if $u=w+k$. Set $u = w+k$ and plug $L_A(x,y)$ and $L_B(x,y)$ into the left-hand side of \cref{eq:equiv1_k=l}. We obtain
		\begin{equation}
			\label{eq:b_equiv1_1}
			L_A(x,y)^{2^{2k}(2^k+1)} =
			a_u^{2^{2k}(2^k+1)} x^{2^{u+2k}(2^k+1)} 
		\end{equation}
		and
		\begin{equation}
		\label{eq:b_equiv1_2}
			L_A(x,y)^{2^{2k}} L_B(x,y)^{2^k} =
			a_u^{2^{2k}} b_u^{2^k} x^{2^{u+k}(2^k+1)} + a_u^{2^{2k}} \overline{b}_u^{2^k} x^{2^{u+2k}} y^{2^{u+k}}
		\end{equation}
		and
		\begin{equation}
			\begin{split}
				\label{eq:b_equiv1_3}
				\beta L_B(x,y)^{2^k+1} &= 
				\beta b_u^{2^k+1} x^{2^u(2^k+1)} + \beta \overline{b}_u^{2^k+1} y^{2^u(2^k+1)}
				\\&\quad  + 
				\beta b_u^{2^k} \overline{b}_u x^{2^{u+k}} y^{2^u} + \beta \overline{b}_u^{2^k} b_u x^{2^u} y^{2^{u+k}}.
			\end{split}
		\end{equation}
		The fourth summand of \cref{eq:b_equiv1_3} cannot be canceled by any other summand of \cref{eq:b_equiv1_1}--\cref{eq:b_equiv1_3} and it cannot be represented on the right-hand side of \cref{eq:equiv1_k=l}. As $\beta, \overline{b}_u \ne 0$, it follows that $b_u = 0$. Consequently, $L_A(X,Y)$ and $L_B(X,Y)$ are monomials of the same degree as in~\cref{eq:LALB_monomials}.\par 
		
		\textbf{Case 2.} Assume $\overline{b}_{w+k} = 0$. By the same reasoning as above, this implies $b_u \ne 0$ and $\overline{a}_u \ne 0$. Now, \cref{eq:equiv2_k=l} holds for $u=w$. Set $u = w$ and plug $L_A(x,y)$ and $L_B(x,y)$ into the left-hand side of~\cref{eq:equiv1_k=l}. The summand $L_A(x,y)^{2^{2k}} L_B(x,y)^{2^k}$ contains the term
		\[
			\overline{a}_u^{2^{2k}} b_u^{2^k} x^{2^{u+k}} y^{2^{u+2k}},
		\]
		that has a nonzero coefficient and cannot be canceled by the other terms on the left-hand side of \cref{eq:equiv1_k=l}. However, it cannot be represented on the right-hand side of \cref{eq:equiv1_k=l}. This is a contradiction.\par
		
		We next study \ref{item:f}. By symmetry, the same result also holds for \ref{item:i}. Moreover, an analogous approach gives identical results for \ref{item:g} and \ref{item:h}. If we plug $L_A(X,Y)$ and $L_B(X,Y)$ of \ref{item:f} into \cref{eq:equiv2_k=l}, we obtain
		\begin{equation}
			\begin{split}
			\label{eq:f_equiv2} 
				L_A(x,y)L_B(x,y) &= 
				a_u b_{u+k} x^{2^u(2^k+1)} + \overline{a}_w \overline{b}_{w+k} y^{2^w(2^k+1)} 
				\\&\quad + 
				a_u \overline{b}_{w+k} x^{2^u}y^{2^{w+k}} + \overline{a}_w b_{u+k} x^{2^{u+k}} y^{2^w}.
			\end{split}
		\end{equation}
		If all coefficients are nonzero, we need $u = w+2k$ to represent the first and the second summand of \cref{eq:f_equiv2} on the right-hand side of \cref{eq:equiv2_k=l}. Then, however, the fourth term of \cref{eq:f_equiv2} cannot be represented on the right-hand side of \cref{eq:equiv2_k=l}, which is a contradiction.\par
		
		Now assume one of the coefficients is zero. We show the case $b_{u+k} = 0$. If we assume $a_u = 0$ instead, we end up with the same contradiction as in Case~2 of the study of \ref{item:b}. By symmetry, analogous results can be obtained when assuming $\overline{a}_w = 0$ or $\overline{b}_{w+k} = 0$. If $b_{u+k} = 0$, it follows that $a_{u}$ and $\overline{b}_{w+k}$ are nonzero as otherwise $L(X,Y)$ would not be a permutation polynomial. Moreover, as the first term of \cref{eq:f_equiv2} vanishes, we need $N_2(X)=0$. Then, also the second term of \cref{eq:f_equiv2} cannot be represented on the right-hand side of \cref{eq:equiv2_k=l} and $\overline{a}_w \overline{b}_{w+k}$ has to be zero. As $\overline{b}_{w+k} \ne 0$, we need $\overline{a}_{w} = 0$ for the second coefficient to be zero. Moreover, we need $u=w+k$ to represent the third summand of \cref{eq:f_equiv2} on the right-hand side of \cref{eq:equiv2_k=l}. Consequently, $L_A(X,Y)$ and $L_B(X,Y)$ are monomials as in \cref{eq:LALB_monomials}. \par
		
		Finally, we study \ref{item:a}. If we plug $L_A(X,Y)$ and $L_B(X,Y)$ of \ref{item:a} into \cref{eq:equiv2_k=l}, we obtain
		\begin{equation}
		\label{eq:a_equiv2}
			L_A(x,y)L_B(x,y) = 
			a_u b_{u} x^{2^{u+1}} + \overline{a}_{w} \overline{b}_{w} y^{2^{w+1}} + 
			(a_u \overline{b}_{w} + \overline{a}_w b_u) x^{2^u}y^{2^w}.
		\end{equation}
		We separate two cases: in the first case, the third term of \cref{eq:a_equiv2} vanishes, in the second case, its coefficient is nonzero. \par
		\textbf{Case~1.} We first show, that the third term of \cref{eq:a_equiv1_2} can only vanish if all coefficients are nonzero. Suppose $a_u = 0$. Then $\overline{a}_w b_u$ has to be zero as well. However, this is not possible, as $a_u = 0$ implies that $\overline{a}_w$ and $b_u$ are nonzero. By symmetry, the same result is obtained if we assume that any other coefficient is zero.\par 
		Consequently, assume all coefficients are nonzero and $\frac{a_u}{b_u} = \frac{\overline{a}_w}{\overline{b}_w}$. Then \cref{eq:a_equiv2} does not provide any information, as the left-hand side is a linearized polynomial. We plug $L_A(X,Y)$ and $L_B(X,Y)$ into the left-hand side of \cref{eq:equiv1_k=l} and obtain
		\begin{equation}
			\begin{split}
			\label{eq:a_equiv1_1}
				L_A(x,y)^{2^{2k}(2^k+1)} &=
				a_u^{2^{2k}(2^k+1)} x^{2^{u+2k}(2^k+1)} +
				\overline{a}_{w}^{2^{2k}(2^k+1)} y^{2^{w+2k}(2^k+1)} 
				\\&\quad +
				a_u^{2^{3k}} \overline{a}_w^{2^{2k}} x^{2^{u+3k}} y^{2^{w+2k}} +
				\overline{a}_w^{2^{3k}} a_u^{2^{2k}} x^{2^{u+2k}} y^{2^{w+3k}}
			\end{split}
		\end{equation}
		and
		\begin{equation}
			\begin{split}
			\label{eq:a_equiv1_2}
				L_A(x,y)^{2^{2k}} L_B(x,y)^{2^k} &=
				a_u^{2^{2k}} b_u^{2^k} x^{2^{u+k}(2^k+1)} + 
				\overline{a}_w^{2^{2k}} \overline{b}_w^{2^k} y^{2^{w+k}(2^k+1)} 
				\\& \quad +
				a_u^{2^{2k}} \overline{b}_w^{2^k} x^{2^{u+2k}} y^{2^{w+k}} +
				\overline{a}_w^{2^{2k}} b_u^{2^k} x^{2^{u+k}} y^{2^{w+2k}}
			\end{split}
		\end{equation}
		and
		\begin{equation}
			\begin{split}
			\label{eq:a_equiv1_3}
				\beta L_B(x,y)^{2^k+1} &= 
				\beta b_u^{2^k+1} x^{2^u(2^k+1)} + \beta \overline{b}_w^{2^k+1} y^{2^w(2^k+1)} +
				\\&\quad + 
				\beta b_u^{2^k} \overline{b}_w x^{2^{u+k}} y^{2^w} + \beta \overline{b}_w^{2^k} b_u x^{2^u} y^{2^{w+k}}.
			\end{split}
		\end{equation}
		No matter how we choose $u$ and $w$, the third and the fourth summand of \cref{eq:a_equiv1_1} cannot be canceled by the terms of \cref{eq:a_equiv1_1}--\cref{eq:a_equiv1_3} and they cannot be represented simultaneously on the right-hand side of \cref{eq:equiv1_k=l}. Hence, at least one of the coefficients needs be zero which is a contradiction.
	
		\textbf{Case~2.} Assume $a_u \overline{b}_{w} + \overline{a}_w b_u \ne 0$. As there are no terms on the left-hand side of \cref{eq:equiv2_k=l} containing the factors $x^{2^k+1}$ and $y^{2^k+1}$, it follows that $N_2(X)=0$, and we need $u=w$ to represent the third summand of~\cref{eq:a_equiv2} on the right-hand side of \cref{eq:equiv2_k=l}. We plug $L_A(X,Y)$ and $L_B(X,Y)$ into \cref{eq:equiv1_k=l} and obtain the same expressions as in \cref{eq:a_equiv1_1}--\cref{eq:a_equiv1_3} with $u=w$. Analogously to Case~1, the third and the fourth term of \cref{eq:a_equiv1_1} cannot be represented on the right-hand side of \cref{eq:equiv1_k=l} at the same time. Hence, $a_u\overline{a}_w$ has to be zero. Assuming $\overline{a}_w = 0$, we obtain, by similar reasoning as in the previous cases, that $L_A(X,Y)$ and $L_B(X,Y)$ have to be monomials of the same degree as in \cref{eq:LALB_monomials}. Assuming $a_u = 0$, we obtain the same contradiction as in the study of \ref{item:b}, Case~2.\par
		
		In summary, the only possible choice of $L_A(X,Y)$ and $L_B(X,Y)$ that can satisfy \cref{eq:equiv1_k=l} and \cref{eq:equiv2_k=l} is $L_A(X,Y) = a_u X^{2^u}$ and $L_B(x,y)=\overline{b}_u Y^{2^u}$. Considering ~\cref{eq:equiv2_k=l} for these monomials, it follows that $N_2(X) = 0$, $N_4(X) = a_u\overline{b}_uX^{X^{2^u}}$, and $M_B(X,Y) = 0$. If we plug $L_A(X,Y)$ and $L_B(X,Y)$ into~\cref{eq:equiv1_k=l}, we obtain
		\begin{equation}
			\begin{split}
			\label{eq:monomials_in_equiv1}
				&a_u^{2^{2k}(2^k+1)} x^{2^{u+2k}(2^k+1)} + a_u^{2^{2k}} \overline{b}_u^{2^k} x^{2^{u+2k}} y^{2^{u+k}} + \beta \overline{b}_u^{2^k+1} y^{2^u(2^k+1)}
				\\&\qquad = N_1(x^{2^{2k}(2^k+1)} + x^{2^{2k}} y^{2^k} + \beta' y^{(2^k+1)}) + N_3(xy) + M_A(x,y).
			\end{split}
		\end{equation}
		Obviously, $N_3(X) = 0$ and $M_A(X,Y) = 0$ and $N_1(X)$ has to be a monomial of degree $u$, the same degree as $L_A(X,Y)$ and $L_B(X,Y)$. Write $N_1(X) = c_u X^{2^u}$. Then \cref{eq:monomials_in_equiv1} becomes
		\[
		\begin{split}
			&a_u^{2^{2k}(2^k+1)} x^{2^{u+2k}(2^k+1)} + a_u^{2^{2k}} \overline{b}_u^{2^k} x^{2^{u+2k}} y^{2^{u+k}} + \beta \overline{b}_u^{2^k+1} y^{2^u(2^k+1)}
			\\&\qquad = c_ux^{2^{u+2k}(2^k+1)} + c_ux^{2^{u+2k}} y^{u+2^k} + c_u\beta'^{2^u} y^{2^u(2^k+1)}
		\end{split}
		\]
		and the coefficients have to meet the following conditions:
		\begin{align}
			\label{eq:Coefficients_final}
			a_u^{2^{2k}(2^k+1)} &= c_u, & a_u^{2^{2k}} \overline{b}_u^{2^k} &= c_u, & \beta \overline{b}_u^{2^k+1} &= c_u \beta'^{2^u}.
		\end{align}
		
		The first two equations of \cref{eq:Coefficients_final} imply $\overline{b}_u = a_u^{2^{2k}}$ and $c_u = \overline{b}_u^{2^k+1}$. Combining the later result with the third equation of \cref{eq:Coefficients_final}, it follows that $\beta = \beta'^{2^u}$.
	\end{proof}

		From the proof of \autoref{th:Taniguchi_equivalence}, we can deduce the order of the automorphism group of the Taniguchi APN functions. Note that \autoref{th:Taniguchi-automorphismgroup} only holds for $m \ge 4$. For $m=2$, the unique Taniguchi APN function $f_{1,1,1}$ on $\F_{2^4}$ is CCZ-equivalent to the Gold APN function $x \mapsto x^3$. Its automorphism group has order $5760$. If $m = 3$, the unique Taniguchi APN function $f_{1,1,\beta}$ on $\F_{2^6}$ is CCZ-equivalent to the APN function $x \mapsto x^3 + ux^{24} + x^{10}$, where $u$ is primitive in $\F_{2^6}$, that was first given by \textcite{browning2009}. In this case, $|\Aut(f_{1,1,\beta})| = 896$. 
	
	\begin{theorem}
		\label{th:Taniguchi-automorphismgroup}
		Let $m \ge 4$, and let $f_{k,\alpha,\beta}$ be a Taniguchi APN function from \autoref{th:ZhouPottAPN} on $\Fptwom$. Define $\beta' = \frac{\beta}{\alpha^{2^{-k}+1}}$. Then
		\[
			|\Aut_{EL}(f_{k,\alpha,\beta})| = 
			\begin{cases}
				3m(2^m-1)	&\text{if } \alpha = 0 \text{ and } m = 4, \rule[-1em]{0em}{1em}\\
				\frac{3}{2}m(2^m-1)	&\text{if } \alpha = 0 \text{ and } m \ge 5,\rule[-1em]{0em}{1em}\\
				\dfrac{m(2^m-1)}{\min \{u : \beta'^{2^u} = \beta'\}}	&\text{if } \alpha \ne 0
			\end{cases} 
		\]
		and
		\[
			|\Aut(f_{k,\alpha,\beta})| = 
			\begin{cases}
				3m2^{2m}(2^m-1)	&\text{if } \alpha = 0 \text{ and } m =4,\rule[-1em]{0em}{1em}\\
				3m2^{2m-1}(2^m-1)	&\text{if } \alpha = 0 \text{ and } m \ge 5,\rule[-1em]{0em}{1em}\\
				\dfrac{m 2^{2m}(2^m-1)}{\min \{u : \beta'^{2^u} = \beta'\}}	&\text{if } \alpha \ne 0.
			\end{cases}
			\vspace{.5em}
		\]
	\end{theorem}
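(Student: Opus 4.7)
The plan is to reduce everything to a count of EL-automorphisms and then multiply by a translation factor. Since $n=2m\ge 8$, \autoref{lem:CCZ_EA} gives $\Aut(f_{k,\alpha,\beta})=\Aut_{EA}(f_{k,\alpha,\beta})$, and \autoref{prop:AutEA_AutEL} together with the fact that every Taniguchi function has no constant term yields
\[
|\Aut(f_{k,\alpha,\beta})|=|T_{f_{k,\alpha,\beta}}|\cdot|\Aut_{EL}(f_{k,\alpha,\beta})|=2^{2m}\cdot|\Aut_{EL}(f_{k,\alpha,\beta})|,
\]
so it suffices to compute $|\Aut_{EL}(f_{k,\alpha,\beta})|$ in each case. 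Moreover, by \autoref{prop:trivial_equivalences}\ref{item:alpha_1}, the function $f_{k,\alpha,\beta}$ is linearly equivalent to $f_{k,1,\beta'}$ with $\beta'=\beta/\alpha^{2^{-k}+1}$ whenever $\alpha\ne 0$; since linear equivalence induces a group isomorphism between the EL-automorphism groups (via conjugation by the fixed EL-mapping), I only need to treat $f_{k,1,\beta'}$ in that case, and $f_{k,0,\beta}$ in the $\alpha=0$ case.

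For $\alpha\ne 0$, I will read the classification of EL-automorphisms directly off the proof of \autoref{th:Taniguchi_equivalence} applied with $\ell=k$ and $\beta'=\beta$ (i.e. $f_{k,1,\beta'}$ mapped to itself). That proof has already shown that the only admissible EL-data are monomials $L_A(X,Y)=a_uX^{2^u}$ and $L_B(X,Y)=\overline{b}_uY^{2^u}$ with $M_A=M_B=N_2=N_3=0$, $N_1(X)=c_uX^{2^u}$, $N_4(X)=a_u\overline{b}_uX^{2^u}$, subject to the coefficient constraints
\[
\overline{b}_u=a_u^{2^{2k}},\qquad c_u=\overline{b}_u^{2^k+1},\qquad \beta'=\beta'^{2^u}.
\]
So an EL-automorphism is determined by a pair $(u,a_u)$ with $u\in\{0,\dots,m-1\}$ satisfying $\beta'^{2^u}=\beta'$ and $a_u\in\Fpm^\ast$ arbitrary. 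The set $S=\{u\in\{0,\dots,m-1\}:\beta'^{2^u}=\beta'\}$ is the kernel of a cyclic action and therefore equals $u_0\Z/m\Z$ where $u_0=\min\{u>0:\beta'^{2^u}=\beta'\}$; hence $|S|=m/u_0$, giving $|\Aut_{EL}(f_{k,1,\beta'})|=\frac{m(2^m-1)}{u_0}$, which is the desired formula.

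For $\alpha=0$, I will use \autoref{prop:alpha=0_ZhouPott}, which states that $f_{k,0,\beta}$ is linearly equivalent to the Pott--Zhou function $g_{k,2k,\gamma}$ for any non-cube $\gamma\in\Fpm^\ast$; again, linear equivalence conjugates one EL-automorphism group onto the other, so it suffices to compute $|\Aut_{EL}(g_{k,2k,\gamma})|$. This was essentially done by the present authors in~\cite{kasperszhou2020}: applying the same coefficient-elimination bookkeeping used in the proof of \autoref{th:Taniguchi_equivalence} (now with the extra symmetry $(x,y)\mapsto(y,x)$ that is visible in the Pott--Zhou form, and with the Frobenius orbit of non-cubes replacing the orbit of $\beta'$) produces the counts $3m(2^m-1)$ for $m=4$ and $\frac{3}{2}m(2^m-1)$ for $m\ge 5$; the discrepancy at $m=4$ comes from the accidental coincidence $2k\equiv -2k\pmod m$, which creates an extra linear symmetry that disappears once $m\ge 5$ because then $k,-k,2k,-2k$ are four distinct residues. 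I expect this case to be the main technical obstacle, because unlike the generic case it cannot simply be harvested from the monomial classification of \autoref{th:Taniguchi_equivalence}; one has to redo the Case~2 analysis of that proof without the hypothesis $\beta\ne 0$ on the linear coefficient of $X^{2^k+1}$, which produces additional admissible $(L_A,L_B)$ arising from the swap of the two coordinates.

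Finally, multiplying each of the three EL-counts by $|T_{f_{k,\alpha,\beta}}|=2^{2m}$ as in the first paragraph gives the three stated values of $|\Aut(f_{k,\alpha,\beta})|$, completing the proof.
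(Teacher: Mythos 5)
Your overall strategy coincides with the paper's: compute $|\Aut_{EL}|$, then multiply by $|T_f|=2^{2m}$ via \autoref{prop:AutEA_AutEL} and \autoref{lem:CCZ_EA}; reduce $\alpha\ne 0$ to $f_{k,1,\beta'}$ and harvest the monomial classification from the proof of \autoref{th:Taniguchi_equivalence}; reduce $\alpha=0$ to the Pott--Zhou function via \autoref{prop:alpha=0_ZhouPott}. Your count in the generic case is also the paper's: $m/u_0$ admissible exponents $u$ times $2^m-1$ choices of $a_u$, with $\overline{b}_u$ and $c_u$ then forced.

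There is, however, one concrete gap. The classification you propose to ``read off'' from the proof of \autoref{th:Taniguchi_equivalence} (that $L_A,L_B$ are monomials of the same degree satisfying \cref{eq:Coefficients_final}) is only established there for $m\ge 5$: the Claim in that proof explicitly assumes $m\ge 5$, because it invokes the result of \cite{kasperszhou2020} that equivalence mappings between Gold APN functions are linearized monomials (valid only for $m\ge 5$), and because several cancellation arguments use $2k\not\equiv\pm k\pmod m$. Your plan therefore does not cover $m=4$ with $\alpha\ne 0$, which is inside the theorem's range; the paper closes this case by a direct \texttt{Magma} computation. You need either such a computation or a separate hand argument for $\F_{2^8}$.

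A smaller remark on the $\alpha=0$ case: you are right that it cannot be harvested from the monomial classification, but you do not need to redo any Case~2 analysis. The paper simply cites \cite[Theorem~5.2]{kasperszhou2020}, which already gives $|\Aut(g_{\ell,s,\gamma})|$ for all Pott--Zhou functions; combined with \autoref{prop:alpha=0_ZhouPott} and the observation that $s=2k$ lands in $\{0,\frac{m}{2}\}$ exactly when $m=4$, this yields the two $\alpha=0$ formulas immediately. Your heuristic explanation of the $m=4$ anomaly ($2k\equiv -2k\pmod m$ forcing $s=\frac{m}{2}$) is consistent with that cited result, so once you replace the sketched re-derivation by the citation, that branch is complete.
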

	\begin{proof}
		We determine $|\Aut_{EL}(f_{k,\alpha,\beta})|$, then $|\Aut(f_{k,\alpha,\beta})|$ follows from \autoref{prop:AutEA_AutEL} and \autoref{lem:CCZ_EA}. If $\alpha = 0$, according to \autoref{prop:alpha=0_ZhouPott}, a Taniguchi APN function $f_{k,0,\beta}$ is linearly equivalent to the Pott-Zhou APN function $g_{k,2k,\beta}$ whose automorphism group was determined by the present authors \cite[Theorem~5.2]{kasperszhou2020}.\par
		
		If $\alpha \ne 0$, we know from \autoref{prop:trivial_equivalences}~(a) that $f_{k,\alpha,\beta}$ is linearly equivalent to $f_{k,1,\beta'}$. We study the case $\alpha = 1$. For $m=4$ the results can be confirmed computationally with \texttt{Magma}~\cite{magma}. Assume $m \ge 5$. Then the proof of \autoref{th:Taniguchi_equivalence} holds. We count the number of equivalence mappings that map $f_{k,1,\beta'}$ on itself. Therefore, we consider the conditions given in~\cref{eq:Coefficients_final} which the coefficients of the linearized monomials $L_A(X,Y)$, $L_B(X,Y)$ and $N_1(X)$ have to meet. We have shown that \cref{eq:Coefficients_final} implies
		\begin{align*}
		\overline{b}_u &= a_u^{2^{2k}}, &c_u &= \overline{b}_u^{2^k+1},& \text{and}&& \beta'^{2^u-1} = 1.
		\end{align*}
		The number of $u$ such that $\beta'^{2^u-1} = 1$ is given by 
		\[
			\frac{m}{\min \{u : \beta'^{2^u} = \beta'\}}.
		\]
		Next, we have $2^m-1$ choices for $a_u$. By choosing $a_u$, the coefficients $\overline{b}_u$ and $\overline{c_u}$ are uniquely determined.
	\end{proof}
	
	From \autoref{th:Taniguchi-automorphismgroup}, we easily deduce the following result about the inequivalence of Taniguchi and Pott-Zhou APN functions. Recall that Pott-Zhou APN functions only exist on $\Fptwom$ where $m$ is even and that we have already solved the case $\alpha = 0$ in \autoref{prop:alpha=0_ZhouPott}.
	
	\begin{corollary}
		\label{cor:inequivalence_Taniguchi_ZhouPott}
		Let $m \ge 4$ be even. Let $f_{k,\alpha,\beta}$, where $\alpha \ne 0$, be a Taniguchi APN function from \autoref{th:TaniguchiAPN} on $\Fptwom$, and let $g_{\ell,s,\gamma}$ be a Pott-Zhou APN function from \autoref{th:ZhouPottAPN} on $\Fptwom$. Then $f_{k,\alpha,\beta}$ and $g_{\ell,s,\gamma}$ are CCZ-inequivalent.
	\end{corollary}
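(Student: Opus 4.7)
The plan is to deduce the CCZ-inequivalence by comparing the orders of the automorphism groups. If $f_{k,\alpha,\beta}$ and $g_{\ell,s,\gamma}$ were CCZ-equivalent, the CCZ-mapping between their graphs would conjugate $\Aut(f_{k,\alpha,\beta})$ onto $\Aut(g_{\ell,s,\gamma})$, so in particular the two groups would share the same order. Since both functions are quadratic APN on $\F_{2^{2m}}$ with $f_{k,\alpha,\beta}(0)=g_{\ell,s,\gamma}(0)=0$, \autoref{lem:CCZ_EA} combined with \autoref{prop:AutEA_AutEL} gives $|\Aut(h)|=2^{2m}\cdot|\Aut_{EL}(h)|$ for $h\in\{f_{k,\alpha,\beta},g_{\ell,s,\gamma}\}$, so the putative CCZ-equivalence would translate into the equality $|\Aut_{EL}(f_{k,\alpha,\beta})|=|\Aut_{EL}(g_{\ell,s,\gamma})|$. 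I would then show that this equality cannot hold.

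On the Taniguchi side, the third case of \autoref{th:Taniguchi-automorphismgroup} yields
\[
|\Aut_{EL}(f_{k,\alpha,\beta})|=\frac{m(2^m-1)}{d},\qquad d:=\min\{u:\beta'^{2^u}=\beta'\}\ge 1,
\]
so $|\Aut_{EL}(f_{k,\alpha,\beta})|\le m(2^m-1)$ unconditionally. On the Pott-Zhou side I would invoke \cite[Theorem~5.2]{kasperszhou2020}, which (in agreement with the $\alpha=0$ rows of \autoref{th:Taniguchi-automorphismgroup} via \autoref{prop:alpha=0_ZhouPott}) states that for every admissible triple $(\ell,s,\gamma)$ one has $|\Aut_{EL}(g_{\ell,s,\gamma})|=\tfrac{3}{2}m(2^m-1)$ when $m\ge 6$ is even and $|\Aut_{EL}(g_{\ell,s,\gamma})|=3m(2^m-1)$ when $m=4$; in both cases $|\Aut_{EL}(g_{\ell,s,\gamma})|\ge \tfrac{3}{2}m(2^m-1)$.

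Combining the two estimates gives
\[
|\Aut_{EL}(f_{k,\alpha,\beta})|\le m(2^m-1)<\tfrac{3}{2}m(2^m-1)\le |\Aut_{EL}(g_{\ell,s,\gamma})|,
\]
for every positive integer $d$, contradicting the equality forced by CCZ-equivalence. Hence no such equivalence can exist. The only real obstacle in this argument is citing the Pott-Zhou automorphism group order for \emph{all} admissible parameter triples $(\ell,s,\gamma)$ and not merely for the subfamily $s=2\ell$ that arises naturally through \autoref{prop:alpha=0_ZhouPott}; this is precisely what \cite[Theorem~5.2]{kasperszhou2020} supplies, after which the inequality above is immediate and the corollary follows in one line.
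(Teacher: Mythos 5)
Your proposal is correct and follows essentially the same route as the paper: the paper also compares automorphism group orders, using $|\Aut(f_{k,\alpha,\beta})| = m2^{2m}(2^m-1)/\min\{u:\beta'^{2^u}=\beta'\}$ against $|\Aut(g_{\ell,s,\gamma})| \in \{3m2^{2m}(2^m-1),\ 3m2^{2m-1}(2^m-1)\}$ from \cite[Theorem~5.2]{kasperszhou2020} and concluding from $\frac{m}{\min\{u:\beta'^{2^u}=\beta'\}} \le m < \frac{3}{2}m < 3m$; your reduction to $\Aut_{EL}$ via \autoref{lem:CCZ_EA} and \autoref{prop:AutEA_AutEL} is just a cosmetic repackaging of the same inequality. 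One small inaccuracy: the case split in \cite[Theorem~5.2]{kasperszhou2020} is governed by whether $s \in \{0,\frac{m}{2}\}$, not by $m=4$ versus $m\ge 6$, but since you only use the lower bound $|\Aut_{EL}(g_{\ell,s,\gamma})| \ge \frac{3}{2}m(2^m-1)$, which holds in all cases, the argument is unaffected.
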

	\begin{proof}
		The order of the automorphism group of a vectorial Boolean function is invariant under CCZ-equivalence. For a Taniguchi APN function $f_{k, \alpha, \beta}$ on $\Fptwom$, we determined the order of the automorphism group $\Aut(f_{k, \alpha, \beta})$ in \autoref{th:Taniguchi-automorphismgroup}. For a Pott-Zhou APN function $g_{\ell,s,\gamma}$ on $\Fptwom$, the present authors~\cite[Theorem~5.2]{kasperszhou2020} showed that
		\[
			|\Aut(g_{\ell,s,\gamma})| = 
			\begin{cases}
			3m2^{2m}(2^m-1)	&\text{if } s \in \{0, \frac{m}{2}\},\\
			3m2^{2m-1}(2^m-1)	&\text{otherwise}.\\
			\end{cases} 
		\]
		As clearly $\frac{m}{\min \{u : \beta'^{2^u} = \beta'\}} \le m$, it follows that $\frac{m}{\min \{u : \beta'^{2^u} = \beta'\}} < \frac{3}{2}m < 3m$. Hence, the automorphism groups of $f_{k,\alpha,\beta}$ and $g_{\ell,s,\gamma}$ are of different order which implies that the functions are CCZ-inequivalent.
	\end{proof}

	From \autoref{cor:inequivalence_Taniguchi_ZhouPott}, we derive the final piece to determine the complete equivalence of Taniguchi APN functions.
	\begin{corollary}
		\label{cor:inequivalence_Taniguchi_alpha=0_1}
		Let $m \ge 4$ be even. Two Taniguchi APN functions $f_{k,0,\beta}$, and $f_{\ell,\alpha',\beta'}$ , where $\alpha' \ne 0$, from \autoref{th:TaniguchiAPN} on $\Fptwom$ are CCZ-inequivalent.
	\end{corollary}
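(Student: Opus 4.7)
The plan is to leverage the two previous results on Pott-Zhou APN functions to deduce this inequivalence without any new direct computation. The key observation is that \autoref{prop:alpha=0_ZhouPott} already identifies each Taniguchi APN function with $\alpha=0$ as being linearly equivalent to a Pott-Zhou APN function, while \autoref{cor:inequivalence_Taniguchi_ZhouPott} separates the Taniguchi functions with $\alpha \ne 0$ from all Pott-Zhou functions via an automorphism-count obstruction.

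Concretely, I would first invoke \autoref{prop:alpha=0_ZhouPott} to write $f_{k,0,\beta}$ as linearly, and therefore CCZ-, equivalent to some Pott-Zhou APN function $g_{k,2k,\gamma}$ on $\Fptwom$ (here we use that $m$ is even so that such a non-cube $\gamma$ exists, matching the hypothesis of \autoref{th:ZhouPottAPN}). Next, since $\alpha' \ne 0$, I would apply \autoref{cor:inequivalence_Taniguchi_ZhouPott} to conclude that $f_{\ell,\alpha',\beta'}$ is CCZ-inequivalent to every Pott-Zhou APN function on $\Fptwom$, in particular to $g_{k,2k,\gamma}$. Since CCZ-equivalence is an equivalence relation, transitivity then forces $f_{k,0,\beta}$ and $f_{\ell,\alpha',\beta'}$ to be CCZ-inequivalent.

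There is essentially no obstacle in the argument; it is a two-line chaining of prior results. The only thing to be careful about is to verify the hypotheses of the cited results: namely that $m \ge 4$ is even (given), that $k$ may be assumed to lie in $0 < k < \tfrac{m}{2}$ after possibly replacing $k$ by $-k$ via \autoref{cor:alpha=0}\ref{item:alpha=0_a} so that \autoref{prop:alpha=0_ZhouPott} applies, and that $\beta \in \Fpm^*$ is a non-cube, which is exactly the APN condition of \autoref{lem:alpha=0}. With these preliminaries noted, the transitivity step completes the proof immediately.
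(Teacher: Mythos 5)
Your proposal is correct and is essentially identical to the paper's own proof: both identify $f_{k,0,\beta}$ with a Pott-Zhou function via \autoref{prop:alpha=0_ZhouPott} and then conclude by \autoref{cor:inequivalence_Taniguchi_ZhouPott} and transitivity of CCZ-equivalence. Your extra remarks on verifying the hypotheses (normalizing $k$ to lie in $0<k<\tfrac{m}{2}$ and noting that $\beta$ must be a non-cube) are sound and only make the argument more careful than the paper's two-line version.
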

	\begin{proof}
		According to \autoref{prop:alpha=0_ZhouPott}, $f_{k,0,\beta}$ is CCZ-equivalent to a Zhou-Pott APN function $g_{k,2k,\gamma}$ from \autoref{th:ZhouPottAPN}. The result now follows from \autoref{cor:inequivalence_Taniguchi_ZhouPott}.
	\end{proof}
	
\section{On the total number of CCZ-inequivalent Taniguchi APN functions on $\Fptwom$}
\label{sec:Taniguchi_number}

	The results from \autoref{sec:Taniguchi_equivalence} allow us to determine the number of CCZ-inequivalent Taniguchi APN functions on $\Fptwom$ for any $m$. This will be done in \autoref{th:numberTaniguchi} by counting the number of parameters $k$, $\alpha$ and $\beta$ that lead to inequivalent functions. Recall from \autoref{prop:trivial_equivalences} that every Taniguchi APN function $f_{k,\alpha,\beta}$ where $\alpha \ne 0$ is CCZ-equivalent to a function $f_{k,1,\beta'}$ for some $\beta' \in \Fpm^*$. Hence, we only need to consider functions with $\alpha = 0$ or $\alpha = 1$. As we know from \autoref{prop:alpha=0_ZhouPott} that $f_{k,0,\beta}$ is equivalent to a Pott-Zhou APN function, whose equivalence problem was solved by the present authors~\cite{kasperszhou2020}, we focus on $\alpha = 1$ first.\par
	
	Recall from \autoref{th:Taniguchi_equivalence} that two Taniguchi APN functions $f_{k,1,\beta}$ and $f_{k,1,\beta'}$ on $\Fptwom$ are CCZ-equivalent if and only if $\beta' = \beta^{2^i}$ for some $i \in \{0, \dots, m-1\}$. Consequently, to obtain the exact number of $\beta$ providing inequivalent functions for fixed $k$, we need to determine the number of orbits of $\beta$ such that $X^{2^k+1} + X + \beta$ has no root in $\Fpm$ under the action of the Galois group $\Gal(\Fpm / \F_2)$.  We will do this in \autoref{prop:beta_number} with the help of the following series of technical lemmas.
	
	\begin{lemma}\label{lem:3k}
		If $k>1$ is an integer with $\gcd(k,3) = 1$, then $3k$ does not divide $2^{k}+1$.
	\end{lemma}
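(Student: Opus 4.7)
The plan is to split into the parity of $k$ and, in the odd case, use a classical ``smallest prime divisor'' argument on the multiplicative order of $2$ modulo a chosen prime of $k$.

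First I would dispose of the easy case $k$ even. Here $2^k \equiv 1 \pmod 3$, so $2^k+1 \equiv 2 \pmod 3$, and thus $3 \nmid 2^k+1$; in particular $3k \nmid 2^k+1$.

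For the remaining case suppose $k>1$ is odd with $\gcd(k,3)=1$, and assume for contradiction that $3k \mid 2^k+1$. Since $\gcd(3,k)=1$, this forces $k \mid 2^k+1$. Let $p$ be the smallest prime divisor of $k$; because $k$ is odd and coprime to $3$, we have $p \geq 5$. From $p \mid 2^k+1$ we get $2^k \equiv -1 \pmod p$ and $2^{2k} \equiv 1 \pmod p$. Let $d$ be the multiplicative order of $2$ modulo $p$. Then $d \mid 2k$ but $d \nmid k$, and also $d \mid p-1$.

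The heart of the argument is the following standard extraction. Writing $d = 2^a b$ with $b$ odd, the condition $d \mid 2k$ together with $k$ odd forces $a=1$, and then $d \nmid k$ forces $b \mid k$. Now $b \mid d \mid p-1$, so every prime divisor of $b$ is strictly less than $p$; but $b \mid k$ and $p$ is the smallest prime divisor of $k$, so $b$ has no prime divisors at all, i.e.\ $b = 1$. Therefore $d = 2$, which gives $p \mid 2^2-1 = 3$, contradicting $p \geq 5$. This contradiction shows $k \nmid 2^k+1$, and hence $3k \nmid 2^k+1$, completing the proof.

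The only mildly delicate point is the ``$a=1$, $b \mid k$'' extraction above; the rest is routine. I do not expect any serious obstacle because this smallest-prime-divisor trick is a well-known tool for this kind of divisibility statement (it is essentially the same technique used to show that $n \mid 2^n - 1$ forces $n = 1$).
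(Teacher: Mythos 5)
Your proof is correct and follows essentially the same route as the paper's: both pass to the smallest prime divisor $p$ of $k$ (necessarily $p\ge 5$ since $k$ is odd and coprime to $3$) and derive the contradiction $p\mid 3$ from the multiplicative order of $2$. The only differences are bookkeeping --- the paper works modulo the full prime power $p^{t}$ and computes $\gcd(2k,\varphi(p^{t}))=2p^{t-1}$ before iterating Fermat's little theorem, whereas you work modulo $p$ and decompose the order as $d=2^{a}b$; note only the harmless swap in your write-up: $d\mid 2k$ with $k$ odd gives $a\le 1$, it is $d\nmid k$ that rules out $a=0$, and $b\mid k$ then follows from $d\mid 2k$.
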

	\begin{proof}
		Assume, by way of contradiction, that $3k \mid 2^{k}+1$. By the Chinese Remainder Theorem, $2^{k} \equiv -1 \pmod{3}$ which means that $k$ is odd. 
		
		Let $k=p_1^{t_1}\cdots p_s^{t_s}$, where $p_1, \dots, p_s$ are prime numbers such that $3<p_1<p_2<\cdots<p_s$ and $t_i\geq 1$ for $i = 1, \dots, s$. For convenience, we set $p=p_1$ and $t=t_1$ in the remainder of this proof. 
		
		By the Chinese Remainder Theorem, it also follows that $2^k\equiv -1 \pmod{p^t}$. Denote by $\varphi(x)$ the Euler's totient function of $x$. Since $2^{2^k} \equiv 1 \pmod{p^t}$ and the unit group of the integer ring $\Z_{p^t}$ has order $\varphi(p^t)$, it follows that $\mathrm{ord}_{p^{t}}(2) \mid \gcd(2k, \varphi(p^{t}))$. Note that $\varphi(p^t) = (p-1)p^{t-1}$. As $p-1 < p_i$ for all $i \in \{1,\dots, s\}$, the number $p-1$ is not divisible by any of the $p_i$. Recalling that $k = p^{t} p_2^{t_2}\cdots p_s^{t_s}$, it follows that $\gcd(2k,\varphi(p^t))=2p^{t-1}$. Consequently, $2^{2p^{t-1}}-1\equiv 0 \pmod{p^t}$. Thus $2^{2p^{t-1}}-1\equiv 4^{p^{t-1}}-1\equiv 0 \pmod{p}$. As $4^{p}=4\pmod{p}$, we obtain $4-1\equiv 0\pmod{p}$ which means $p=3$. This is a contradiction to the assumption $3 < p$.
	\end{proof}

	\begin{lemma}\label{lem:zeros_3}
		Suppose that $k$ and $m$ are positive integers satisfying $\gcd(k,m)=1$. Write $m=rp$ for an integer $r$ and a prime $p$. For $\beta\in \F_{2^r}$, suppose that the polynomial $P(X)=X^{2^k+1}+X+\beta$ has no root in $\F_{2^r}$.
		\begin{enumerate}[label=(\alph*)]
			\item\label{item:zeros_a} If $p\neq 3$, then $P(X)$ has no root in $\Fpm$.
			\item\label{item:zeros_b} If $p= 3$, then $P(X)$ has exactly three roots in $\Fpm$.
		\end{enumerate}
	\end{lemma}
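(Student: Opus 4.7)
The plan is to combine a Galois-orbit argument with the standard bound on the number of roots. Because $\beta \in \F_{2^r}$, the generator $\phi\colon y \mapsto y^{2^r}$ of $\Gal(\F_{2^m}/\F_{2^r}) \cong \Z/p\Z$ permutes the roots of $P$ in $\F_{2^m}$; since $p$ is prime, every root outside $\F_{2^r}$ lies in a $\phi$-orbit of size exactly $p$, so writing $N_m$, $N_r$ for the numbers of roots of $P$ in $\F_{2^m}$, $\F_{2^r}$ respectively, $N_m \equiv N_r \pmod{p}$, and the hypothesis $N_r = 0$ forces $p \mid N_m$. On the other hand, the Bluher--Helleseth-Kholosha classification asserts that for $\gcd(k,n) = 1$ and $\beta \ne 0$, the trinomial $X^{2^k+1}+X+\beta$ has $0$, $1$, or $3$ roots in $\F_{2^n}$---the value $2$ never occurring. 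Applied at $n = m$ this bounds $N_m \in \{0,1,3\}$. Part~(a) now follows at once: if $p \ge 5$, the only multiple of $p$ in $\{0,1,3\}$ is $0$; if $p = 2$, the only even value is $0$. Either way $N_m = 0$.

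For part~(b), where $p = 3$, the two facts only narrow $N_m$ to $\{0,3\}$, and the remaining task is to exclude $N_m = 0$. I would do this by double-counting $|E| := |\{x \in \F_{2^m} : \psi_m(x) \in \F_{2^r}\}|$, where $\psi_m(x) := x^{2^k+1}+x$. The Galois restriction reduces the admissible pairs $(N_r(\beta), N_m(\beta))$ for $\beta \in \F_{2^r}^*$ to $\{(0,0),(0,3),(1,1),(3,3)\}$, and summing $N_m$ across $\F_{2^r}$ while invoking \autoref{lem:number_of_beta} over $\F_{2^r}$ yields the identity
\[
|E| \;=\; 2^{r+1} + \epsilon_r - 3\, n_{00},
\]
where $n_{00}$ counts the $\beta \in \F_{2^r}^*$ with $N_r(\beta) = N_m(\beta) = 0$ and $\epsilon_r = +1$ (resp.\ $-1$) according as $r$ is odd (resp.\ even). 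Proving $|E| = 2^{r+1} + \epsilon_r$ intrinsically then forces $n_{00} = 0$, which is precisely the content of~(b).

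The principal obstacle is this independent evaluation of $|E|$. The route I would pursue is the additive-character expansion $|E| = 2^{r-m} \sum_{w \in \ker \mathrm{Tr}_r^m} \sum_{x \in \F_{2^m}} (-1)^{\mathrm{Tr}(w\,\psi_m(x))}$, followed by an appeal to the known Walsh spectrum of the Gold function $x \mapsto x^{2^k+1}$ (three-valued when $m$ is odd, with one further value when $m$ is even). Summing the relevant Walsh values over $\ker \mathrm{Tr}_r^m \setminus \{0\}$ with enough precision to hit the exact target $2^{r+1}+\epsilon_r$ is the delicate technical point; an alternative would be a direct analysis of the irreducible factorization $P = \prod_i f_i$ over $\F_{2^r}$, leveraging the degree-sum constraint $\sum_i \deg f_i = 2^k + 1$ with each $\deg f_i \ge 2$ together with the restriction imposed by $\beta \in \F_{2^r}^*$ satisfying $N_r = 0$ to force at least one $\deg f_i = 3$.
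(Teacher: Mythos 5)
Your part~(a) is correct and is essentially the paper's argument: the Galois orbit of a hypothetical root under $y\mapsto y^{2^r}$ has size exactly $p$ (a shorter orbit would put the root in $\F_{2^r}$), and the Helleseth--Kholosha/Bluher classification caps the root count at $\{0,1,3\}$, which is incompatible with $p\mid N_m$ and $N_m>0$ unless $p=3$.

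For part~(b), however, there is a genuine gap. Your double-counting reduction is sound --- the pair classification $\{(0,0),(0,3),(1,1),(3,3)\}$ is justified by the same two facts as in (a), and the identity $|E|=2^{r+1}+\epsilon_r-3n_{00}$ follows correctly once one adds the $\beta=0$ contribution $N_m(0)=|\F_2|=2$ and invokes \autoref{lem:number_of_beta} over $\F_{2^r}$. But this identity merely restates the problem: proving $n_{00}=0$ is exactly statement (b), and everything now hinges on the independent evaluation $|E|=2^{r+1}+\epsilon_r$, which you explicitly leave open. Neither of your two proposed routes closes it. The character-sum route requires not just the value set of the Gold--Walsh coefficients but the exact signed distribution of $W(w,w)$ over the specific $2r$-dimensional subspace $\ker\mathrm{Tr}_r^m$; pinning down the sign imbalance $n_+-n_-=2^{(r-1)/2}(2^r+1)$ (for $r$ odd, say) is a substantial quadratic-form computation that is nowhere sketched and is not a routine ``appeal to the known Walsh spectrum.'' The factorization route as stated also fails: the constraint that $P=\prod_i f_i$ over $\F_{2^r}$ with $\sum_i\deg f_i=2^k+1$ odd and each $\deg f_i\ge 2$ only forces some \emph{odd} degree factor, not a degree-$3$ one (e.g.\ $2^2+1=5$ admits a single irreducible quintic a priori), and a degree-$d$ factor with $3\nmid d$ contributes no roots to $\F_{2^{3r}}$. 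The paper's actual proof of (b) is exactly the careful version of this second route, and it is not short: it passes to the minimal odd degree $\ell>1$ of an irreducible factor of $P$ over $\F_{2^m}$, moves up to $\F_{2^{m\ell}}$ where Bluher's theorem gives exactly $2^h+1$ roots with $h=\gcd(\ell,k)$, shows the $\Gal(\F_{2^{m\ell}}/\F_{2^r})$-orbit of any such root has full length $3\ell$, deduces $3h\mid 2^h+1$, and derives a contradiction from the number-theoretic \autoref{lem:3k} (plus a separate argument for $h=1$ and for $k=1$). None of that machinery is present in your proposal, so (b) remains unproven.
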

	\begin{proof} Set $\sigma(x)=x^{2^{r}}$ for $x$ in any extension of $\F_{2^r}$.
		
		We show \ref{item:zeros_a} first. Suppose that $P(X)$ has at least one root $x_0 \in \Fpm$. Then $x_0, \sigma(x_0), \dots, \sigma^{p-1}(x_0)$ have to be $p$ distinct roots of $P(X)$ in $\Fpm$ because $\sigma(P(x_0))= \sigma(x_0)^{2^k+1}+\sigma(x_0)+\beta=0$ and $p$ is prime. \textcite[Theorem~1]{hellesethkholosha2008} showed that if $P(X)$ has more than one root, then $P(X)$ has exactly three roots in $\Fpm$ which contradicts the assumption that $p\neq3$.\par
		
		We next prove \ref{item:zeros_b}. Now $m=3r$. If $P(X)$ has at least one root in $\Fpm$, by the proof of~\ref{item:zeros_a}, it has exactly three roots in $\Fpm$ and we are done. Assume, by way of contradiction, that $P(X)$ has no root in $\Fpm$. First, if $k=1$, then $P(X)$ has degree $3$ and is irreducible over $\F_{2^r}$. Therefore, $P(X)$ splits over $\Fpm$ which contradicts our assumption.\par
		
		From now on, assume $k>1$. Write $P(X) = P_1(X)P_2(X) \cdots P_s(X)$ for irreducible polynomials $P_1(X), \dots, P_s(X) \in \Fpm$. Since $\deg(P(X)) = 2^k+1$ is odd, there exists a polynomial $P_j(X)$, where $j \in \{1,\dots, s\}$, of odd degree. Denote by $J_{\textnormal{odd}}$ the set of all $j \in \{1,\dots,s\}$ such that $\deg(P_j(X))$ is odd, and let $j^* \in J_{\textnormal{odd}}$ such that $\deg(P_{j^*}(X)) \le \deg(P_j(X))$ for all $j \in J_{\textnormal{odd}}$. Set $\ell = \deg(P_{j^*}(X))$ and note that $\ell > 1$ and $\ell$ is odd. Then $P_{j^*}(X)$ splits over  $\F_{2^{m\ell}}$, which is an extension of $\Fpm$ with $[\F_{2^{m\ell}}:\Fpm]=\ell$. Consequently, $P(X)$ has a root in $\F_{2^{m\ell}}$, and there is no root of $P(X)$ in any proper subfield of $\F_{2^{m\ell}}$ containing $\Fpm$.\par
		Define $h = \gcd(m\ell, k)$. As $m$ and $k$ are coprime, this implies $h = \gcd(\ell, k)$ and, in particular, $h \mid \ell$. Then $\F_{2^h}=\F_{2^{m\ell}}\cap \F_{2^k}$. As $\ell$ is odd, according to \textcite[Theorem~5.6]{bluher2004}, $P(X)$ has exactly $2^{h}+1$ roots in $\F_{2^{m\ell}}$. If $h = 1$, then the roots of $P(X)$  in $\F_{2^{m\ell}}$ are also elements of $\Fpm$ as $m=3r$. This contradicts our assumption. Hence, assume $h > 1$. We may regard $\sigma$ as an element in $\Gal(\F_{2^{m\ell}}/\F_{2^r})$. If $3\nmid \ell$, then it is clear that $x_0$, $\sigma(x_0)$, $\dots, \sigma^{3\ell}(x_0)$ are pairwise distinct for any root~$x_0$ of $P(X)$ in $\F_{2^{m\ell}}$. If $3\mid \ell$, then $x_0$, $\sigma(x_0)$, $\dots, \sigma^{3\ell}(x_0)$ are still pairwise distinct for any root $x_0$ of $P(X)$ in $\F_{2^{m\ell}}$. The reason is as follows. Suppose that $\sigma^{j}(x_0)=x_0$ for some $j<3\ell$ with $j\mid 3\ell$. This means $[\F_{2^r}(x_0): \F_{2^r}]=j$. Thus,
		\[
			[\Fpm(x_0):\Fpm] =
			\begin{cases}
				j &\text{if } 3\nmid j,\\
				j/3 &\text{if } 3\mid j.
			\end{cases}
		\]
		For the first case, $3\nmid j$, as $\F_{2^{m\ell}}=\Fpm(x_0)$ by definition, we get $j=\ell$ which is a contradiction to the assumption that $3\mid \ell$. For the second case, $3\mid j$, we get $\ell = [\F_{2^{m\ell}}:\Fpm] = [\Fpm(x_0):\Fpm] = j/3$ which contradicts the assumption $j<3\ell$.
		
		Therefore, $3\ell$ divides $2^h+1$, in particular, as $h \mid \ell$, we obtain  $3h \mid 2^h+1$. By \autoref{lem:3k}, this is only possible if $\gcd(h,3) > 1$ which implies $\gcd(m,k) > 1$. This is a contradiction.
	\end{proof}
	
	For any two relatively prime positive integers $k$ and $m$, define
	\begin{equation}
	\label{eq:Phi}
		\Phi(m) = \{\beta\in \F_{2^m}: X^{2^k+1}+X+\beta \text{ has no roots in }\F_{2^m}\}
	\end{equation}
	and
	\[
		M(m) =|\Phi(m)|
	\] 
	and
	\begin{equation}
	\label{eq:N(m)_definition}
		N(m) = \left|\{\beta\in \Phi(m): \beta\notin\F_{2^{m'}} \text{ with } m'< m \text{ and } m'\mid m\}\right|.
	\end{equation}
	According to \autoref{lem:number_of_beta}, 
	\begin{equation}
	\label{eq:M(m)}
		M(m)=\frac{2^m+(-1)^{m+1}}{3}. 
	\end{equation}
	In the following \autoref{lem:N(m)}, we determine the exact value of $N(m)$.
	
	\begin{lemma}
	\label{lem:N(m)}
		Suppose that $m=3^{n_0}\prod_{i=1}^t p_i^{n_i}$ where $n_0$ is a non-negative integer,  $p_1, \dots, p_t$ are distinct prime numbers, and $n_1, \dots, n_t$ are positive integers. If $t = 0$, that means $m = 3^{n_0}$ and, in particular, includes the case $m=1$, then 
		\[
			N(m)=\frac{2^{m}+1}{3}.
		\]
		If $t \ge 1$, then
		\begin{equation}
			\begin{split}
				N(m)& = \frac{1}{3} \Bigg( 2^m -\sum_{i=1}^t 2^{\frac{m}{p_i}} + \sum_{\substack{i,j=1,\\j \ne i}}^t 2^{\frac{m}{p_i p_j}}- \dots\\
				\label{eq:N(m)} 
				&\quad \dots + (-1)^\ell \sum_{\substack{i_1,\dots,i_\ell = 1\\\text{pairwise distinct}}}^t 2^{\frac{m}{p_{i_1} \cdots p_{i_\ell}}}+ \dots + (-1)^t2^{\frac{m}{p_1 p_2 \cdots p_t}}-\varepsilon\Bigg),
			\end{split}
		\end{equation}
		where
		\[
			\varepsilon=
			\begin{cases}
				2 & \text{if } t=1
				 \text{ and } m \equiv 2 \pmod{4},\\
				0 &\text{otherwise}.
			\end{cases}
		\]
	\end{lemma}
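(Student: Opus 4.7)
The plan is to relate $\Phi(m)$ to $\Phi(m')$ for divisors $m'\mid m$ via \autoref{lem:zeros_3}, apply inclusion--exclusion over the maximal proper divisors $\{m/p : p\in P(m)\}$ (with $P(m)$ the set of prime divisors of $m$), and finally separate the $2^n$-part and the $(-1)^n$-part of $M(n)=\tfrac{1}{3}(2^n-(-1)^n)$. The key auxiliary claim is:
\[
\Phi(m)\cap \F_{2^{m'}}=
\begin{cases}
\Phi(m') & \text{if } 3\nmid m/m',\\
\emptyset & \text{if } 3 \mid m/m',
\end{cases}
\qquad\text{for every }m'\mid m.
\]
For the first case, factor $m/m'$ into primes (all different from $3$) and iterate \autoref{lem:zeros_3}\ref{item:zeros_a} along a tower $m'=m_0\mid m_1\mid\dots\mid m_s=m$ of prime-index extensions, using $\gcd(k,m)=1$ to guarantee the hypothesis at each step. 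For the second case, locate the first step of such a tower at which the quotient equals $3$, giving $m'\mid m''\mid 3m''\mid m$. Any $\beta\in\F_{2^{m'}}\subseteq\F_{2^{m''}}$ either has $P(X)=X^{2^k+1}+X+\beta$ already possessing a root in $\F_{2^{m''}}$, or by \autoref{lem:zeros_3}\ref{item:zeros_b} acquires three roots in $\F_{2^{3m''}}\subseteq\F_{2^m}$; either way $\beta\notin\Phi(m)$. (The needed $\gcd(k,3)=1$ is automatic, since $3\mid m/m'$ forces $3\mid m$ and $\gcd(k,m)=1$.)

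The case $t=0$ is then immediate: for $m=3^{n_0}$ every proper divisor $m'$ satisfies $3\mid m/m'$, so $\Phi(m)$ meets no proper subfield and $N(m)=M(m)=(2^m+1)/3$, as $m$ is odd. So assume $t\geq 1$. Since $\bigcup_{m'<m,\,m'\mid m}\F_{2^{m'}}=\bigcup_{p\in P(m)}\F_{2^{m/p}}$ and $\bigcap_{p\in S}\F_{2^{m/p}}=\F_{2^{m/\prod S}}$, inclusion--exclusion gives
\[
N(m)=\sum_{S\subseteq P(m)}(-1)^{|S|}\,\bigl|\Phi(m)\cap \F_{2^{m/\prod S}}\bigr|.
\]
By the auxiliary claim this intersection is empty whenever $3\in S$ (note that $\prod S$ is squarefree, so $3\in S$ is equivalent to $3\mid\prod S$), so only $S\subseteq\{p_1,\dots,p_t\}$ survives and, expanding $M$,
\[
3N(m)=\sum_{S\subseteq\{p_1,\dots,p_t\}}(-1)^{|S|}\,2^{m/\prod S}\;-\;\underbrace{\sum_{S\subseteq\{p_1,\dots,p_t\}}(-1)^{|S|}\,(-1)^{m/\prod S}}_{=:\,B}.
\]
The first sum matches the alternating $2^{m/\prod S}$ expression displayed in the lemma statement, so it remains to verify $B=\varepsilon$.

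This is a short case analysis by the $2$-adic valuation of $m$. If $m$ is odd then $m/\prod S$ is odd for every $S$, yielding $B=-(1-1)^t=0$. If $v_2(m)\geq 2$ then $m/\prod S$ is always even (since $\prod S$ is squarefree, hence $v_2(\prod S)\leq 1$), so $B=(1-1)^t=0$. If $v_2(m)=1$ then $2\in\{p_1,\dots,p_t\}$, say $p_1=2$, and $m/\prod S$ is odd precisely when $2\in S$; splitting the sum by the membership of $2$ in $S$, both halves evaluate to $\sum_{S'\subseteq\{p_2,\dots,p_t\}}(-1)^{|S'|}=(1-1)^{t-1}$, so $B=2(1-1)^{t-1}$, which equals $2$ for $t=1$ and $0$ for $t\geq 2$. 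In every case $B=\varepsilon$, completing the proof. The main technical hurdle is the $3\mid m/m'$ half of the auxiliary claim: one must invoke \autoref{lem:zeros_3}\ref{item:zeros_b} at the correct step of a prime-factorization tower of $m/m'$ to ensure the three forced roots actually lie inside $\F_{2^m}$. Everything else is inclusion--exclusion and routine parity bookkeeping.
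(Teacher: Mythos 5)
Your proposal is correct and follows essentially the same route as the paper: the same reduction $\Phi(m)\cap\F_{2^{m'}}=\Phi(m')$ or $\emptyset$ according to whether $3\nmid m/m'$ or $3\mid m/m'$ (obtained from \autoref{lem:zeros_3}), the same inclusion--exclusion over the prime divisors other than $3$, and the same parity bookkeeping for the $(-1)^{m'}$ correction terms (your subset-sum split by membership of $2$ is just a reformulation of the paper's $\binom{t-1}{\ell-1}-\binom{t-1}{\ell}$ count). Your explicit tower argument for iterating \autoref{lem:zeros_3} across non-prime-index extensions is a welcome detail that the paper leaves implicit, but it is not a different method.
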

	\begin{proof}
		By definition, to determine $N(m)$, we have to exclude each element in $\Phi(m)\cap \F_{2^{m'}}$ from $\Phi(m)$ for every proper subfield $\F_{2^{m'}}$ of $\F_{2^m}$. We first consider the case $t=0$: If $n_0 = 1$, which means $m=1$, then $X^{2^k+1} + X + \beta$ has no root in $\F_2$ if and only if $\beta = 1$. Hence, $N(1) = 1$. If $n_0 \ge 1$, by \autoref{lem:zeros_3}, 
		\[
			\Phi(m)\cap \F_{2^{m'}}=
			\begin{cases}
				\emptyset & \text{if } 3m' \mid m,\\
				\Phi(m') & \text{if } 3m' \nmid m.		
			\end{cases}
		\]
		Hence, we get $N(3^{n_0})=M(3^{n_0})$ and, by \cref{eq:M(m)}, $M(3^{n_0}) = \frac{2^m+1}{3}$. From now on, assume $t \ge 1$. Then, by the inclusion-exclusion principle, 
		\begin{equation}
		\begin{split}
		\label{eq:N(m)_ex}
			N(m)& =M(m)-\sum_{i=1}^t M\left(\frac{m}{p_i}\right) + \sum_{\substack{i,j=1,\\j \ne i}}^t M\left(\frac{m}{p_ip_j}\right) - \cdots\\
			&\quad \dots +(-1)^\ell\sum_{\substack{i_1,\cdots,i_\ell = 1 \\ \text{pairwise distinct}}}^t M\left(\frac{m}{p_{i_1}\cdots p_{i_\ell}}\right) + \cdots + (-1)^t M\left(\frac{m}{p_1\cdots p_t}\right).
		\end{split}
		\end{equation}
		
		If $m$ is odd, then $m'$ is odd for all $m' \mid m$. If $4 \mid m$, then $m'$ is even for all $m' = \frac{m}{p_{i_1} \cdots p_{i_\ell}}$ that occur in \cref{eq:N(m)_ex}. Consequently, in these two cases, by \eqref{eq:M(m)}, we have $M(m')=\frac{2^{m'}+(-1)^{m+1}}{3}$ for any $m' = \frac{m}{p_{i_1} \cdots p_{i_\ell}}$ occuring in \eqref{eq:N(m)_ex}. Plugging $M(m')$ into \eqref{eq:N(m)_ex}, we obtain
		\begin{equation}
			\begin{split}
			\label{eq:N1}
				N(m)=&\frac{1}{3}\Bigg( 2^m - \sum_{i=1}^t 2^{\frac{m}{p_i}} + \sum_{\substack{i,j=1,\\j \ne i}}^t 2^{\frac{m}{p_i p_j}} - \cdots + (-1)^t 2^{\frac{m}{p_1p_2\cdots p_t}} \Bigg)\\
				&\quad + \frac{(-1)^{m+1}}{3}\left(1-\binom{t}{1}+\binom{t}{2}-\cdots+(-1)^t  \right).
			\end{split}
		\end{equation}
		Note that the last sum of \cref{eq:N1} equals zero which can be seen by using the binomial identity 
		\[
			(x+y)^n = \sum_{k=0}^{n} \binom{n}{k} x^{n-k} y^k
		\]
		with $x=1$ and $y=-1$ (or vice versa).\par
		
		If $m \equiv 2 \pmod{4}$, we set $p_1=2$ and $n_1=1$. By \cref{eq:M(m)},
		\begin{equation}
		\label{eq:M(m')}
			M(m') =
			\begin{cases}
				\frac{2^{m'}+1}{3}	&\text{if } m' = \frac{m}{2p_{i_2} \cdots p_{i_\ell}},\\
				\frac{2^{m'}-1}{3}	&\text{if } m' = \frac{m}{p_{i_1} \cdots p_{i_\ell}} \text{ and } i_1, \dots, i_\ell \ne 1.
			\end{cases}
		\end{equation}
		Plugging \cref{eq:M(m')} into \cref{eq:N(m)_ex}, we obtain
		\begin{equation}
			\begin{split}
			\label{eq:N(m)_m2mod4}
				N(m)=&\frac{1}{3}\Bigg( 2^m - \sum_{i=1}^t 2^{\frac{m}{p_i}} + \sum_{\substack{i,j=1,\\j \ne i}}^t 2^{\frac{m}{p_i p_j}} - \cdots + (-1)^t 2^{\frac{m}{p_1p_2\cdots p_t}} \Bigg)\\
				&\quad + \frac{1}{3} \sum_{i=0}^t (-1)^i \left(\binom{t-1}{i-1} - \binom{t-1}{i}\right).
			\end{split}
		\end{equation}
		
		We show where the last sum of \cref{eq:N(m)_m2mod4} is coming from and which values it can take. If $t=1$, then $m=3^{n_0} \cdot 2$. Note that $m$ is even and $\frac{m}{2}$ is odd. Hence, in this case, $N(m) = M(m) - M(\frac{m}{2}) = 2^m - 2^\frac{m}{2} -2$, and the last sum of \cref{eq:N(m)_m2mod4} equals $-2$. Now assume~$t > 1$. Consider the sum
		\begin{equation}
		\label{eq:sum}
			\sum_{\substack{i_1,\cdots,i_\ell = 1 \\ \text{pairwise distinct}}}^t M\left(\frac{m}{p_{i_1}\cdots p_{i_\ell}}\right)
		\end{equation}
		from \cref{eq:N(m)} for some $\ell \in \{1, \dots, t\}$. This sum consists of $\binom{t}{\ell}$ terms. Assume $p_{i_1} < p_{i_2} < \dots < p_{i_\ell}$. If $i_1 = 1$, which means $p_{i_1} = 2$, then $\frac{m}{2 p_{i_2}\cdots p_{i_\ell}}$ is odd. In this case, we have $\binom{t-1}{\ell-1}$ possibilities to choose $p_{i_2}, \dots, p_{i_\ell}$. On the contrary, if $i_1 \ne 1$, then $\frac{m}{p_{i_1}\cdots p_{i_\ell}}$ is even, and we have $\binom{t-1}{\ell}$ possibilities to choose $p_{i_1}, \dots, p_{i_\ell}$. Combining these results with \cref{eq:M(m')}, we have $\binom{t-1}{\ell-1}$ terms of the form $(2^{m'} + 1)$ and $\binom{t-1}{\ell}$ terms of the form $(2^{m'} - 1)$ in the sum from \cref{eq:sum}. Note that, by similar reasoning as in the case $m$ odd or $4 \mid m$, this sum is zero if $t > 1$.
	\end{proof}
	
	Consider $\Phi(m)$ as in \cref{eq:Phi}. We have shown in \autoref{lem:polynomial_transformation} that if $X^{2^k+1} + X + \beta$ has no root in $\Fpm$, then neither has $X^{2^k+1} + X + \beta^{2^i}$ for all $i \in \{0,\dots,m-1\}$. Consequently, $\Phi(m)$ decomposes into orbits of $\beta \in \Fpm^*$ under the action of the Galois group $\Gal(\Fpm / \F_2)$. In \autoref{prop:beta_number}, we count this number of orbits.
	
	\begin{proposition}
	\label{prop:beta_number}
		Let $\Phi(m)$ as in \cref{eq:Phi}, and define
		\[
			B(m) = \left\{ \{ \beta^{2^i} : i \in \{0,\dots,m-1\} \} : \beta \in \Phi(m)\right\}
		\]
		as the set of orbits of $\beta \in \Fpm^*$ such that $X^{2^k+1} + X + \beta$ has no root in $\Fpm$ under the action of the Galois group $\Gal(\Fpm / \F_2)$. Moreover, define $b(m) = |B(m)|$. Then
		\[
			b(m) = \sum_{m' \mid m,\ 3 \nmid \frac{m}{m'}} \frac{N(m')}{m'},
		\]
		where $N(m')$ is defined as in \cref{eq:N(m)_definition} and can be calculated as in \autoref{lem:N(m)}.
	\end{proposition}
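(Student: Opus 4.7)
The plan is to partition $\Phi(m)$ according to the smallest subfield of $\Fpm$ in which each $\beta$ lies, and then to use \autoref{lem:zeros_3} to identify exactly which divisors contribute.

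For every $m' \mid m$, let $\F_{2^{m'}}^{\circ}$ denote the set of elements of $\F_{2^{m'}}$ of degree exactly $m'$ over $\F_2$, that is,
\[
	\F_{2^{m'}}^{\circ} = \F_{2^{m'}} \setminus \bigcup_{\substack{m'' \mid m' \\ m'' < m'}} \F_{2^{m''}}.
\]
The disjoint-union decomposition $\Fpm = \bigsqcup_{m' \mid m} \F_{2^{m'}}^{\circ}$ restricts to
\[
	\Phi(m) = \bigsqcup_{m' \mid m} \bigl(\Phi(m) \cap \F_{2^{m'}}^{\circ}\bigr),
\]
and by \autoref{lem:polynomial_transformation}~(b), each piece is stable under $\Gal(\Fpm/\F_2)$. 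Since every $\beta \in \F_{2^{m'}}^{\circ}$ has Galois stabilizer $\Gal(\Fpm/\F_{2^{m'}})$, its Galois orbit has size exactly $m'$. Therefore
\[
	b(m) = \sum_{m' \mid m} \frac{|\Phi(m) \cap \F_{2^{m'}}^{\circ}|}{m'}.
\]

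The remaining task is to evaluate each summand. Fix $m' \mid m$, write $m/m' = p_1 p_2 \cdots p_j$ as a product of primes with multiplicity, and let $P(X) = X^{2^k+1}+X+\beta$ for $\beta \in \F_{2^{m'}}$. We walk up the chain $\F_{2^{m'}} \subseteq \F_{2^{m' p_1}} \subseteq \cdots \subseteq \Fpm$, applying \autoref{lem:zeros_3} at each step; the coprimality hypothesis is preserved throughout because $\gcd(k,m) = 1$. If $3 \nmid m/m'$, then every $p_i$ differs from $3$, and iterating part~(a) of the lemma shows that $P(X)$ has no root in $\Fpm$ if and only if it has no root in $\F_{2^{m'}}$; hence $\Phi(m) \cap \F_{2^{m'}} = \Phi(m')$, and intersecting with $\F_{2^{m'}}^{\circ}$ yields exactly $N(m')$ elements by the definition \cref{eq:N(m)_definition}. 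If instead $3 \mid m/m'$, then $\Phi(m) \cap \F_{2^{m'}} = \emptyset$: any root of $P(X)$ lying in $\F_{2^{m'}}$ already persists in $\Fpm$, while for $\beta \in \Phi(m')$ the first step of the tower at which $p_i = 3$ produces three roots in a subfield of $\Fpm$ by part~(b) of the lemma. Either way $\beta \notin \Phi(m)$, so the summand vanishes.

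Collecting both cases gives the claimed identity. The main obstacle is the bookkeeping for the iterated application of \autoref{lem:zeros_3} along the tower of intermediate subfields; once one verifies that the \enquote{no-root} hypothesis really propagates across each non-triple prime extension and that a single prime factor $3$ suffices to create roots in $\Fpm$, the formula follows from a routine orbit count.
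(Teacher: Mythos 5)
Your proof is correct and follows essentially the same route as the paper: decompose $\Phi(m)$ by the minimal subfield containing $\beta$, observe that each element of degree $m'$ contributes an orbit of length $m'$ counted by $N(m')/m'$, and invoke \autoref{lem:zeros_3} to show that only divisors with $3 \nmid \frac{m}{m'}$ contribute. Your write-up merely makes explicit the iterated application of \autoref{lem:zeros_3} along a tower of prime extensions, which the paper leaves implicit.
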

	\begin{proof}
		For any subfield $\F_{2^{m'}}$ of $\Fpm$, we count the number of orbits of  $\beta \in \Phi(m) \cap \F_{2^{m'} }^*$ under the action of $\Gal(\F_{2^{m'}}/\F_2)$ that have full length~$m'$. This number is given by $\frac{N(m')}{m'}$. It follows from \autoref{lem:zeros_3} that we only need to consider the orbits in $\F_{2^{m'}}$ with $3 \nmid [\Fpm : \F_{2^{m'}}]$. Adding all these numbers gives $b(m)$. 
	\end{proof}

	With the help of \autoref{prop:beta_number}, we can eventually determine the number of CCZ-inequivalent Taniguchi APN functions on $\Fptwom$ in \autoref{th:numberTaniguchi}. We give a nice lower bound on this number in \autoref{cor:bound}.
	
	\begin{theorem}
	\label{th:numberTaniguchi}
		Let $m \ge 3$, and denote by $n(m)$ the number of CCZ-inequivalent Taniguchi APN functions $f_{k,\alpha,\beta}$ from \autoref{th:TaniguchiAPN} on $\Fptwom$. Then
		\[
			n(m) = 
			\begin{cases}
			\dfrac{\varphi(m)b(m)}{2}	&\text{if $m$ is odd},\rule[-1.2em]{0em}{1em}\\
			\dfrac{\varphi(m)(b(m)+1)}{2}	&\text{if $m$ is even},
			\end{cases}
		\]
		where $\varphi$ denotes Euler's totient function and $b(m)$ is as in \autoref{prop:beta_number}.
	\end{theorem}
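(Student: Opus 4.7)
The plan is to partition the Taniguchi APN functions into two disjoint families according to whether $\alpha=0$ or $\alpha\neq 0$, count the CCZ-classes in each family separately, and then appeal to \autoref{cor:inequivalence_Taniguchi_alpha=0_1} to guarantee no equivalences cross between them. All the needed ingredients are already in place from \autoref{sec:Taniguchi_equivalence} and \autoref{prop:beta_number}; the argument is essentially a careful bookkeeping.

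For the family with $\alpha\neq 0$, I would first invoke \autoref{prop:trivial_equivalences}\,\ref{item:alpha_1} to reduce to the case $\alpha=1$, and then \autoref{prop:trivial_equivalences}\,\ref{item:-k_k} to restrict the exponent parameter to the range $0<k<m/2$ with $\gcd(k,m)=1$. Since $m\geq 3$, the values $k$ and $m-k$ are always distinct for $k$ coprime to $m$, so the set of admissible $k$ has size $\varphi(m)/2$. \autoref{th:Taniguchi_equivalence} then tells us that for a fixed admissible $k$, two functions $f_{k,1,\beta}$ and $f_{k,1,\beta'}$ are CCZ-equivalent exactly when $\beta$ and $\beta'$ lie in the same $\Gal(\Fpm/\F_2)$-orbit, and that functions with distinct admissible values of $k$ are CCZ-inequivalent. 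By \autoref{prop:beta_number}, the number of orbits of $\beta\in\Fpm^*$ such that $X^{2^k+1}+X+\beta$ has no root in $\Fpm$ is exactly $b(m)$, independent of $k$ (since $b(m)$ depends only on $m$). So this family contributes $\frac{\varphi(m)}{2}\,b(m)$ CCZ-inequivalent functions.

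For the family with $\alpha=0$, \autoref{lem:alpha=0} shows that no APN functions arise when $m$ is odd, so this family contributes nothing in that case. When $m$ is even, \autoref{prop:alpha=0_ZhouPott} identifies $f_{k,0,\beta}$ (for any non-cube $\beta$) with a Pott-Zhou APN function $g_{k,2k,\gamma}$, and the equivalence of these Pott-Zhou functions has already been settled in \autoref{cor:alpha=0}: restricting $k$ to $0<k<m/2$ coprime to $m$ (using \ref{item:alpha=0_a}), the CCZ-classes are in bijection with admissible values of $k$ (by \ref{item:alpha=0_b}), and the non-cube $\beta$ does not matter. This yields $\varphi(m)/2$ additional classes.

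Finally, \autoref{cor:inequivalence_Taniguchi_alpha=0_1} guarantees that no function in the $\alpha=0$ family is CCZ-equivalent to one in the $\alpha\neq 0$ family, so the two counts add. Adding them gives $n(m)=\varphi(m)b(m)/2$ for $m$ odd and $n(m)=\varphi(m)b(m)/2+\varphi(m)/2=\varphi(m)(b(m)+1)/2$ for $m$ even, as claimed. There is no genuine obstacle here; the only care required is to avoid double-counting the trivial equivalences $k\leftrightarrow -k$ and $\beta\leftrightarrow\beta^{2^i}$, which are precisely accounted for by the factor $\varphi(m)/2$ and by the definition of $b(m)$, respectively.
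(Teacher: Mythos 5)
Your proposal is correct and follows essentially the same route as the paper: reduce to $\alpha\in\{0,1\}$ and $0<k<m/2$ via \autoref{prop:trivial_equivalences}, count $\varphi(m)/2$ admissible $k$ and $b(m)$ Galois orbits of $\beta$ using \autoref{th:Taniguchi_equivalence} and \autoref{prop:beta_number}, then add the single extra class per $k$ for $\alpha=0$ when $m$ is even via \autoref{cor:alpha=0} and \autoref{cor:inequivalence_Taniguchi_alpha=0_1}. No issues.
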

	\begin{proof}
		Let $m \ge 3$. Thanks to \autoref{prop:trivial_equivalences}, we only need to consider $\alpha \in \{0,1\}$ and $0 < k < \frac{m}{2}$. We count the number of CCZ-inequivalent Taniguchi APN functions $f_{k,1,\beta}$ first: According to \autoref{th:Taniguchi_equivalence}, for $0 < k,\ell <\frac{m}{2}$ two functions $f_{k,1,\beta}$ and $f_{\ell,1,\beta'}$ are CCZ-equivalent if and only if $k = \ell$ and $\beta = \beta'^{2^i}$ for some $i \in \{0,\dots,m-1\}$. We count the number of pairs $(k,\beta)$ that lead to inequivalent APN functions: As $0 < k < \frac{m}{2}$ and $\gcd(k,m)=1$, we have $\frac{\varphi(m)}{2}$ choices for $k$. The number of admissible $\beta \in \Fpm^*$ equals $b(m)$ from \autoref{prop:beta_number}. If $m$ is odd, then these are all inequivalent Taniguchi APN functions.\par
		
		If $m$ is even, according to \autoref{lem:alpha=0}, there also exist Taniguchi APN functions with $\alpha = 0$. In this case, it follows from \autoref{cor:inequivalence_Taniguchi_alpha=0_1} in combination with \autoref{cor:alpha=0} that for every valid choice of $k$, there is additionally exactly one equivalence class of Taniguchi APN functions $f_{k,0,\beta}$, that is inequivalent to all functions with $\alpha \ne 0$. As before, we have $\frac{\varphi(m)}{2}$ choices for $k$.
	\end{proof}

	Note that \autoref{cor:bound} shows that the number of APN functions on $\Fptwom$ increases exponentially in $m$.
	\begin{corollary}
	\label{cor:bound}
		Let $m \ge 3$, and define $n(m)$ as the number of CCZ-inequivalent Taniguchi APN functions from \autoref{th:TaniguchiAPN} on $\Fptwom$. Then
		\[
			n(m) \ge \frac{\varphi(m)}{2}\left\lceil\frac{2^m+1}{3m}\right\rceil,
		\]
		where $\varphi$ denotes Euler's totient function.
	\end{corollary}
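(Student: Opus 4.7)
The plan is to combine \autoref{th:numberTaniguchi} with the elementary lower bound $b(m) \ge M(m)/m$, where $M(m) := |\Phi(m)|$ is the total number of admissible $\beta \in \Fpm^*$ provided by \autoref{lem:number_of_beta}. Since $B(m)$ partitions $\Phi(m)$ into orbits under $\Gal(\Fpm/\F_2)$ and each such orbit has cardinality at most $[\Fpm:\F_2] = m$, the pigeonhole principle yields this estimate immediately. Because $b(m)$ is an integer, we may in fact strengthen it to $b(m) \ge \lceil M(m)/m \rceil$.

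For odd $m$, \autoref{lem:number_of_beta} gives $M(m) = (2^m+1)/3$, so $b(m) \ge \lceil (2^m+1)/(3m)\rceil$. Plugging this into the odd case of \autoref{th:numberTaniguchi}, I obtain $n(m) = \tfrac{\varphi(m)}{2}\, b(m) \ge \tfrac{\varphi(m)}{2}\bigl\lceil (2^m+1)/(3m)\bigr\rceil$, as required.

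For even $m$, \autoref{lem:number_of_beta} gives $M(m) = (2^m-1)/3$, hence $b(m) \ge \lceil (2^m-1)/(3m)\rceil$. Since adding an integer commutes with the ceiling, this is equivalent to $b(m)+1 \ge \bigl\lceil (2^m+3m-1)/(3m)\bigr\rceil$. Observing that $3m-1 \ge 1$ for $m \ge 1$, the fraction inside the right-hand ceiling is at least $(2^m+1)/(3m)$, and monotonicity of $\lceil \cdot \rceil$ gives $b(m)+1 \ge \lceil (2^m+1)/(3m)\rceil$. The even case of \autoref{th:numberTaniguchi} then yields $n(m) = \tfrac{\varphi(m)}{2}(b(m)+1) \ge \tfrac{\varphi(m)}{2}\bigl\lceil (2^m+1)/(3m)\bigr\rceil$.

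I anticipate no serious obstacle: the entire corollary is a direct consequence of the trivial orbit-counting inequality $|B(m)| \ge |\Phi(m)|/m$ combined with the exact count of $|\Phi(m)|$ from \autoref{lem:number_of_beta}. Neither the explicit formula for $b(m)$ in \autoref{prop:beta_number} nor the intricate evaluation of $N(m')$ in \autoref{lem:N(m)} is needed for this bound, although they can be invoked to sharpen it in particular cases.
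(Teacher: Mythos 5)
Your proposal is correct and follows essentially the same route as the paper: both combine \autoref{th:numberTaniguchi} with the orbit-counting bound $b(m)\ge\lceil M(m)/m\rceil$ derived from \autoref{lem:number_of_beta}, the only cosmetic difference being that for even $m$ the paper notes $\bigl\lceil\frac{2^m-1}{3m}\bigr\rceil=\bigl\lceil\frac{2^m+1}{3m}\bigr\rceil$ for $m\ge 3$, whereas you absorb the $+1$ from the even case of the theorem. Both variants are valid.
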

	\begin{proof}
		Define $B(m)$ and $b(m)$ as in \autoref{prop:beta_number}. The value of $b(m)$ is minimal if all the orbits in $B(m)$ have full length $m$. By \autoref{lem:number_of_beta}, this implies 
		\[
			b(m) \ge 
			\begin{cases}
				\left\lceil\frac{2^m-1}{3m}\right\rceil	&\text{if $m$ is even} \rule[-1.2em]{0em}{1em},\\
				\left\lceil\frac{2^m+1}{3m}\right\rceil	&\text{if $m$ is odd},
			\end{cases}
		\]
		and it is easy to see that $\left\lceil\frac{2^m-1}{3m}\right\rceil = \left\lceil\frac{2^m+1}{3m}\right\rceil$ for all $m \ge 3$.
	\end{proof}

	In \autoref{tab:Taniguchi-APN_smallm}, we list the exact number of CCZ-inequivalent Taniguchi APN functions obtained from \autoref{th:numberTaniguchi} for certain values of $m$. Recall that for $m=2$, there is only one unique Taniguchi APN function. We moreover compare these numbers to the lower bound that we have established in \autoref{cor:bound}. It can be seen that the bound is very close to the actual number of Taniguchi APN functions.\par
	
	\begin{table}[h]
		\caption{Number of CCZ-inequivalent Taniguchi APN functions on $\Fptwom$ for certain values of $m$.}
		\label{tab:Taniguchi-APN_smallm}		
		\begin{tabular}{*{16}{r}}
			\hline
			m &2 &3 &4 &5 &6 &7 &8 &9 &10 &11 &12 &13 &14 &15 &16\rule[.5em]{0em}{.5em}\\\hline
			\# 	&1& 1& 3& 6& 5& 21& 26& 57& 74& 315& 234& 1\,266& 1\,185& 2\,916	&5\,492\rule[.5em]{0em}{.5em}\\
			bound&1& 1& 2& 6& 4& 21& 22& 57& 70& 315& 228& 1\,266& 1\,173& 2\,916 &5\,464\rule[.5em]{0em}{.5em}\\\hline
		\end{tabular}\par
		\vspace{1em}
	
		\begin{tabular}{*{8}{r}}
			\hline
			m	&17 &18 &19 &20 &25	&50	&100\rule[.5em]{0em}{.5em}\\\hline
			\# 		& 20\,568& 14\,595& 82\,791& 69\,988& 4\,473\,950& $\approx 7.5 \cdot 10^{13}$& $\approx 8.5 \cdot 10^{28}$\rule[.5em]{0em}{.5em}\\
			bound	& 20\,568& 14\,565& 82\,791&	69\,908& 4\,473\,930& $\approx 7.5 \cdot 10^{13}$& $\approx 8.5 \cdot 10^{28}$\rule[.5em]{0em}{.5em}\\\hline
		\end{tabular}
	\end{table}

\section{Conclusion and open questions}
\label{sec:conclusion}
	In the present paper, we establish a new lower bound on the total number of CCZ-inequivalent APN functions on the finite field $\Fptwom$. We show that the number of APN functions on $\Fptwom$ grows exponentially in $m$. For even $m$, our result presents a great improvement of the lower bound previously given by the present authors~\cite{kasperszhou2020}. For odd~$m$, this is the first such lower bound.\par
	
	Our result now shifts the focus on the following open problems concerning APN functions:
	\begin{itemize}
		\item Establish a lower bound on the total number of CCZ-inequivalent APN functions on the finite field $\F_{2^n}$ with $n$ odd.
		\item As it is confirmed now that there are very many quadratic APN functions on~$\Fptwom$, the efforts of finding new constructions of APN functions should focus on the search for non-quadratic ones.
		\item It was shown by \textcite{anbar2019} that Taniguchi APN functions have the classical Walsh spectrum. It would be interesting to find more APN functions with non-classical Walsh spectra.
	\end{itemize}
	
\section*{Acknowledgments}
We thank the anonymous reviewers for their useful comments and suggestions, and we thank Satoshi Yoshiara and Ulrich Dempwolff for their helpful comments on \autoref{lem:CCZ_EA} and the connection of the automorphism groups of quadratic APN functions under EA- and under CCZ-equivalence.\par

This work is partially supported by National Natural Science Foundation of China (Nos.\ 11771451, 617722213) and Training Program for Excellent Young  Innovators of Changsha (No. kq1905052).

\printbibliography
	
\end{document}